\newtheorem{thm}{Theorem}[section]
\newtheorem{lem}[thm]{Lemma}
\newtheorem{cor}[thm]{Corollary}
\newtheorem{prop}[thm]{Proposition}
\theoremstyle{definition}
\newtheorem{example}[thm]{Example}
\newtheorem{defn}[thm]{Definition}
\numberwithin{equation}{thm}
\begin{document}

\title[Generalized higher cluster categories]{Cluster tilting objects in generalized \\ higher cluster categories}

\author{Lingyan GUO}
\address{Universit\'e Paris Diderot - Paris~7, UFR de Math\'ematiques,
Institut de Math\'ematiques de Jussieu, UMR 7586 du CNRS, Case 7012,
B\^atiment Chevaleret, 75205 Paris Cedex 13, France}
\email{guolingyan@math.jussieu.fr}

\date{\today}

\begin{abstract}
We prove the existence of an $m$-cluster tilting object in a
generalized $m$-cluster category which is $(m+1)$-Calabi-Yau and
Hom-finite, arising from an $(m+2)$-Calabi-Yau dg algebra with
finite-dimen-\\sional homology in degree $0$. This is a
generalization of the result for the {$m = 1$} case in Amiot's
Ph.~D.~thesis. Our results apply in particular to higher cluster
categories associated to suitable finite-dimensional algebras of
finite global dimension, and higher cluster categories associated to
Ginzburg dg categories coming from suitable graded quivers with
superpotential.
\end{abstract}

 \maketitle

\section{Introduction}

In recent years, the categorification of cluster algebras has
attracted a lot of attention. Notice that there are two quite
different notions of categorification: additive categorification,
studied in many articles, and monoidal categorification as
introduced in \cite{HL09}. One important class of categories arising
in additive categorification is that of the cluster categories
associated to finite-dimensional hereditary algebras. These were
introduced in \cite{BMRRT06} (for quivers of type $A$ in
\cite{CCS06}), and investigated in many subsequent articles, e.g.
\cite{BMR06} \cite{BMRT07} \cite{CC06} \cite{CK06} \cite{CK08}
\ldots\ , cf. \cite{R10} for a survey. The cluster category
${\mathcal {C}}_Q$ associated to the path algebra of a finite
acyclic quiver $Q$ is constructed as the orbit category of the
bounded derived category ${\mathcal {D}}$$^b ({\rm mod} kQ)$ under
the action of the autoequivalence $\tau^{-1}\Sigma$, where $\Sigma$
is the suspension functor and $\tau$ the Auslander-Reiten
translation. This category is Hom-finite, triangulated and
2-Calabi-Yau. Analogously, for a positive integer $m$, the
$m$-cluster category ${\mathcal {C}}_Q^{\tiny{(m)}}$ is constructed
as the orbit category of ${\mathcal {D}}$$^b ({\rm mod} kQ)$ under
the action of the autoequivalence $\tau^{-1}\Sigma^m$. This higher
cluster category is Hom-finite, triangulated, and
$(m+1)$-Calabi-Yau. It was first mentioned in \cite{Ke05}, and has
been studied in more detail in several articles \cite{ABST}
\cite{KR07} \cite{KR08} \cite{Th07} \ldots\ . Many results about
cluster categories can be generalized to $m$-cluster categories. In
particular, combinatorial descriptions of higher cluster categories
of type $A_n$ and $D_n$ are studied in \cite{KM1} \cite{KM2}, the
existence of exchange triangles in $m$-cluster categories was shown
in \cite{IY08}, and \cite{ZZ} proved that there are exactly $m+1$
non isomorphic complements to an almost complete tilting object, and
so on.

Claire Amiot \cite{Am08} generalized the construction of the cluster
categories to finite-dimensional algebras $A$ of global dimension
$\leq 2$. In order to show that there is a triangle equivalence
between ${\mathcal {C}}_A$, constructed as a triangulated hull
\cite{Ke05}, and the quotient category per$\Pi_3 (A)/{\mathcal
{D}}^b$$\Pi_3 (A)$, where $\Pi_3 (A)$ is the 3-derived preprojective
algebra \cite{Ke09} of $A$, she first studied the category
${\mathcal {C}}_A = $per$A/{\mathcal {D}}$$^b (A)$ associated to a
dg algebra $A$ with the following four properties: \begin{itemize}
\item[1)] $A$ is homologically smooth; \item[2)] $A$ has vanishing homology in
positive degrees; \item[3)] $A$ has finite-dimensional homology in
each degree and \item[4)] $A$ is $3$-Calabi-Yau as a bimodule.
\end{itemize} She proved that the category $\mathcal {C}$ is
Hom-finite and 2-Calabi-Yau. Moreover, the image of the free dg
module $A$ is a cluster tilting object in $\mathcal {C}$ whose
endomorphism algebra is the zeroth homology of $A$. She applied
these results in particular to the Ginzburg dg algebras
$\Gamma_{(Q,W)}$ associated \cite{Gi06} to Jacobi-finite quivers
with potential $(Q,W)$, and then introduced generalized cluster
categories ${\mathcal {C}}_{(Q,W)}$ = per$\Gamma_{(Q,W)}/{\mathcal
{D}}$$^b \Gamma_{(Q,W)}$, which specialize to the cluster categories
${\mathcal {C}}_Q$ in the case where $Q$ is acyclic and $W$ is the
zero potential.

The motivation of this article is to investigate the existence of
cluster tilting objects in generalized higher cluster categories. We
change the above fourth property of the dg algebra $A$ to:
\begin{itemize} \item[4')] $A$ is $(m+2)$-Calabi-Yau as a bimodule. \end{itemize}
Similarly as in \cite{Am08}, using the inherited $t$-structure on
per$A$ we prove that the quotient category ${\mathcal {C}}_A =
$per$A/{\mathcal {D}}$$^b (A)$ is Hom-finite and $(m+1)$-Calabi-Yau
in section 2. We call it the generalized $m$-cluster category. The
image of the free dg module $A$ is an $m$-cluster tilting object in
$\mathcal {C}$ whose endomorphism algebra is the zeroth homology of
$A$.

We apply these main results in section 3 to higher cluster
categories ${\mathcal {C}}_{\tiny{(Q,W)}}^{(m)}$ associated to
Ginzburg dg categories \cite{Ke09} arising from suitable graded
quivers with superpotential $(Q,W)$. In order for the Ginzburg dg
categories to satisfy the four properties, we assume that their
zeroth homologies are finite-dimensional, that the graded quivers
are concentrated in nonpositive degrees, and that the degrees of the
arrows of $Q$ are greater than or equal to $-m$. This generalized
higher cluster category ${\mathcal {C}}_{\tiny{(Q,W)}}^{(m)}$
specializes to the higher cluster category ${\mathcal {C}}_Q^{(m)}$
when $Q$ is an acyclic ordinary quiver and $W$ is the zero
superpotential.

In the last section, we work with finite-dimensional algebras $A$ of
global dimension $\leq n$. If the functor ${\rm Tor}_n^A (-,DA)$ is
nilpotent, then the $(n-1)$-cluster category ${\mathcal
{C}}_A^{(n-1)}$ defined as in section 4 of $A$ is Hom-finite,
$n$-Calabi-Yau and the image of $A$ is an $(n-1)$-cluster tilting
object in ${\mathcal {C}}_A^{(n-1)}$. This section is a
straightforward generalization of Section~4 in \cite{Am08}, so we
only list the main steps of the proof.

\subsection*{Acknowledgments} The author is supported by
the China Scholarship Council (CSC). This article is part of her
Ph.~D.~thesis under the supervision of Bernhard Keller. She is
grateful to him for his guidance, patience and kindness. She also
deeply thanks Dong Yang for helpful discussions and constant
encouragement.

\vspace{.3cm}
\section{Existence of higher cluster tilting objects}

Let $k$ be a field and $A$ a differential graded (dg) $k$-algebra.
We write per$A$ for the {\em perfect derived category} of $A$,
i.e.~the smallest triangulated subcategory of the derived category
${\mathcal {D}}(A)$ containing $A$ and stable under passage to
direct summands. We denote by ${\mathcal {D}}$$^b (A)$ the bounded
derived category of $A$ whose objects are those of ${\mathcal
{D}}(A)$ with finite-dimensional total homology, and denote by $A^e$
the dg algebra $A^{op} \otimes_k A$. Usually, we write [1] for the
suspension functors $\Sigma$ in triangulated categories. Let $m \geq
1$ be a positive integer. Suppose that $A$ has the following
properties ($\star$):
\begin{itemize}
\item[a)] $A$ is homologically smooth, {\it i.e.}~$A$ belongs to per($A^e$)
when considered as a bimodule over itself;

\item[b)] the $p$-th homology $H^p A$ vanishes for each positive integer $p$\,;

\item[c)] the $0$-th homology $H^0 A$ is finite-dimensional;

\item[d)] $A$ is $(m+2)$-Calabi-Yau as a bimodule, {\it i.e.}~there is an
isomorphism in $\mathcal{D}$$(A^e)$
\end{itemize}

\begin{center}
$R {\rm Hom}_{A^e} (A, A^e) \simeq A[-m-2].$
\end{center}

\smallskip

Let $D$ denote the duality functor ${\rm Hom}_k ( - , k)$.

\begin{lem} [\cite{Ke08}]\label{17}

Suppose that $A$ is homologically smooth. Define

\begin{center}
$\Omega = R {\rm Hom}_{A^e} (A, A^e)$
\end{center}

\noindent and view it as an object in $\mathcal{D}$$(A^e)$. Then for
all objects $L$ of $\mathcal{D}$$(A)$ and $M$ of $\mathcal{D}$$^b
(A)$, we have a canonical isomorphism

\begin{center}
$D {\rm Hom}_{{\mathcal{D}}(A)} (M, L) \simeq {\rm
Hom}_{{\mathcal{D}}(A)} (L \overset{L}{\otimes}_A {\Omega}, M).$
\end{center}

\noindent If we have an isomorphism $\Omega \simeq A [-d]$ in
$\mathcal{D}$$(A^e)$ for some positive integer $d$, then
$\mathcal{D}$$^b (A)$ is $d$-Calabi-Yau, {\it i.e.}~we have

\begin{center}
$D {\rm Hom}_{{\mathcal{D}}(A)} (M, L) \simeq {\rm
Hom}_{{\mathcal{D}}(A)} (L, M [d]).$
\end{center}

\end{lem}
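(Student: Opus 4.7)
My plan is to prove the general duality formula first and then deduce the Calabi--Yau statement as an immediate consequence. For the latter: if $\Omega \simeq A[-d]$ in $\mathcal{D}(A^e)$, then $L \overset{L}{\otimes}_A \Omega \simeq L[-d]$ in $\mathcal{D}(A)$, and substituting into the first isomorphism yields
\[
D\Hom_{\mathcal{D}(A)}(M, L) \simeq \Hom_{\mathcal{D}(A)}(L[-d], M) \simeq \Hom_{\mathcal{D}(A)}(L, M[d]),
\]
which is the required $d$-Calabi--Yau property on $\mathcal{D}^b(A)$.

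For the first statement, my approach is to construct a chain of natural isomorphisms at the level of $\RHom$-complexes and pass to $H^0$ at the end. I would first recall that for right $A$-modules, $\Hom_k(M, L)$ carries a natural bimodule structure via $(a \phi b)(m) = \phi(m a) b$, and that the bimodule-central elements recover $\Hom_A(M, L)$; deriving this identity yields $\RHom_A(M, L) \simeq \RHom_{A^e}(A, \RHom_k(M, L))$. Next, I would invoke homological smoothness: the two functors $\RHom_{A^e}(A, -)$ and $\Omega \overset{L}{\otimes}_{A^e} -$ on $\mathcal{D}(A^e)$ are cohomological, commute with arbitrary coproducts (the first because $A \in \per(A^e)$ is compact in $\mathcal{D}(A^e)$), and agree on $A^e$, hence agree on all of $\mathcal{D}(A^e)$. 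Since $M \in \mathcal{D}^b(A)$ has finite-dimensional total homology, $\RHom_k(M, L) \simeq DM \overset{L}{\otimes}_k L$ with the induced bimodule structure, and combining the two identifications gives
\[
\RHom_A(M, L) \simeq \Omega \overset{L}{\otimes}_{A^e} (DM \overset{L}{\otimes}_k L).
\]
Applying $D$ to both sides and reinterpreting the right-hand side, via adjunction and the symmetry of the $A^e$-tensor, as $\RHom_A(L \overset{L}{\otimes}_A \Omega, M)$ completes the proof; taking $H^0$ yields the stated Hom-level identity.

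The main obstacle is this last reinterpretation: one must carefully track left and right $A$-actions to verify that contracting the two actions of $\Omega$ with the correct factors produces precisely the functor $L \mapsto L \overset{L}{\otimes}_A \Omega$ in the first slot of $\RHom_A(-, M)$, rather than a swapped or transposed variant. This is routine if somewhat delicate diagram chasing, and requires no input beyond homological smoothness of $A$ together with the fact that $M$ has finite-dimensional total homology (so that it commutes with $D$ in the required way).
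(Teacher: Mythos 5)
Your argument is correct and is essentially the proof from \cite{Ke08} that the paper cites for this lemma without reproducing it: one rewrites $\RHom_A(M,L)$ as $\RHom_{A^e}(A,\Hom_k(M,L))$ (this adjunction is exactly the paper's own lemma \ref{22}), replaces $\RHom_{A^e}(A,-)$ by $\Omega\overset{L}{\otimes}_{A^e}-$ using smoothness, uses finite-dimensionality of $H^*M$ to write $\Hom_k(M,L)\simeq L\otimes_k DM$ and to get $DDM\simeq M$, and dualizes back through the adjunction; the Calabi--Yau statement then follows by substituting $\Omega\simeq A[-d]$. The one step to tighten is where you say the functors $\RHom_{A^e}(A,-)$ and $\Omega\overset{L}{\otimes}_{A^e}-$ ``agree on $A^e$, hence agree'': objectwise agreement on a generator proves nothing, so you must exhibit the canonical evaluation map $\RHom_{A^e}(A,A^e)\overset{L}{\otimes}_{A^e}X\to\RHom_{A^e}(A,X)$ and argue that it is invertible for all $X$ because it is so for $X=A^e$ and both sides are coproduct-preserving triangle functors on the compactly generated category $\mathcal{D}(A^e)$ (compactness of $A$ being exactly homological smoothness).
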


From the proof given in \cite{Ke08} of lemma \ref{17}, one can see
that $\mathcal{D}$$^b (A)$ is Hom-finite and is contained in per$A$.
We denote by $\pi$ the canonical projection functor from per$A$ to
${\mathcal{C}}_A =\, $ {per$A$}/{$\mathcal{D}$$^b (A)$}.

\smallskip
The main generalized result is the following theorem:
\begin{thm}\label{1}
Let $A$ be a dg $k$-algebra with the properties ($\star$).

\begin{itemize}
\item[1)] The category ${\mathcal{C}}_A =\, $ {{\rm
per}$A$}/{$\mathcal{D}$$^b (A)$} is {\rm Hom}-finite and
$(m+1)$-Calabi-Yau.

\item[2)] The object $T = {\pi} A$ is an {\em $m$-cluster
tilting object}, i.e.~we have $${\rm Hom}_{{\mathcal{C}}_A} (T,
T[r]) = 0, \, r = 1,\ldots,m,$$ and for each object $L$ in
${\mathcal {C}}_A$, if ${\rm Hom}_{{\mathcal{C}}_A} (T, L [r])$
vanishes for all $r = 1,\ldots,m ,$ then $L$ belongs to $add(T)$ the
full subcategory of ${\mathcal {C}}_A$ consisting of direct factors
of direct sums of copies of $\pi A$,

\item[3)] The endomorphism algebra  of $T$ is isomorphic to $H^0 A$.
\end{itemize}
\end{thm}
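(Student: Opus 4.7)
The plan is to adapt Amiot's argument \cite{Am08} from the $m=1$ ($2$-CY) setting to general $m$ by widening the fundamental domain for $\pi$ from a two-step to an $(m+1)$-step aisle of the canonical $t$-structure on $\mathrm{per}\,A$. Property (b) guarantees that the standard $t$-structure on $\mathcal{D}(A)$ restricts to $\mathrm{per}\,A$ with heart $\mathrm{mod}\,H^0 A$, and I would set
\[
\mathcal{F} \;=\; (\mathrm{per}\,A) \,\cap\, \mathcal{D}^{\leq 0}(A) \,\cap\, {}^\perp\mathcal{D}^{\leq -m-1}(A).
\]
The first task is to verify that $A \in \mathcal{F}$ (clear from (b), since $\mathrm{Hom}(A,Y) = H^0 Y$), and that every object of $\mathrm{per}\,A$ differs by an object of $\mathcal{D}^b(A)$ from one in $\mathcal{F}$ via suitable truncation triangles in the $t$-structure, so that the restricted functor $\pi|_{\mathcal{F}}$ is essentially surjective onto $\mathcal{C}_A$.

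The central technical step is to establish, for $X, Y \in \mathcal{F}$, a Hom formula of the shape
\[
\mathrm{Hom}_{\mathcal{C}_A}(\pi X, \pi Y) \;\simeq\; \mathrm{Hom}_{\mathrm{per}\,A}(X, Y) \,\oplus\, D\,\mathrm{Hom}_{\mathrm{per}\,A}(Y, X[m+1]).
\]
The first summand is finite-dimensional because $Y$ lies in the $(m+1)$-step aisle and $H^0 A$ is finite-dimensional (property (c)); the second is finite-dimensional after applying the duality of Lemma \ref{17} to reinterpret $\mathrm{Hom}(Y, X[m+1])$ in terms of bounded objects controlled by the $t$-structure. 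This yields part 1): Hom-finiteness is immediate, and the $(m+1)$-CY property is forced by the symmetry of the right-hand side under $D$, which furnishes the Serre functor $[m+1]$ on $\mathcal{C}_A$.

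For part 3), specializing $X = Y = A$ in the Hom formula yields $\mathrm{End}_{\mathcal{C}_A}(T) \simeq H^0 A \oplus D\,H^{m+1} A = H^0 A$, the second summand vanishing by (b). For the vanishings in part 2), the same specialization with a shift gives $\mathrm{Hom}_{\mathcal{C}_A}(T, T[r]) \simeq H^r A \oplus D\,H^{m+1-r} A$, and both summands vanish for $1 \le r \le m$ because $r$ and $m+1-r$ are then strictly positive. For the completeness half of $m$-cluster tilting, I would lift $L$ to some $X \in \mathcal{F}$ and use the $m$ hypothesized vanishings $\mathrm{Hom}_{\mathcal{C}_A}(T, L[r]) = 0$, combined with the $t$-structure truncations and the CY duality of Lemma \ref{17} applied inductively across the $m+1$ levels of the aisle, to show that $X$ is concentrated in degree $0$ as a projective $H^0 A$-module, hence $L \in \mathrm{add}(T)$.

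The step I expect to be the main obstacle is this completeness half of (2): converting a list of $m$ Hom-vanishings into structural information about $X \in \mathcal{F}$ requires an iterative argument spanning the $m+1$ homological degrees of the aisle, which is noticeably more intricate than the two-step induction in \cite{Am08}. Once this is handled and the Hom formula is in place, all remaining assertions of the theorem reduce to the framework of that paper.
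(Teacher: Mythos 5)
Your choice of fundamental domain $\mathcal{F} = \mathrm{per}\,A \cap \mathcal{D}^{\leq 0}\cap{}^{\perp}\mathcal{D}^{\leq -m-1}$ and the overall plan (restrict $\pi$ to $\mathcal{F}$, check $A, A[1],\dots,A[m]\in\mathcal{F}$, reduce everything to computations in $\mathcal{D}$) match the paper. But the central technical step is asserted rather than proved, and it is asserted in a form that cannot do the work you ask of it. For $X,Y\in\mathcal{F}$ the second summand $D\,\mathrm{Hom}(Y,X[m+1])$ of your Hom formula vanishes identically, since $X[m+1]\in\mathcal{D}^{\leq -m-1}$ and $Y\in{}^{\perp}\mathcal{D}^{\leq -m-1}$; so on $\mathcal{F}$ your formula degenerates to the statement that $\pi|_{\mathcal{F}}$ is fully faithful. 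That statement is exactly the hard part: in the paper it rests on Lemma \ref{3} (the $m$ triangles $P_{i+1}\to Q_i\to P_i$ with $Q_i$ free and $P_m\in\mathrm{add}\,A$), the functorial isomorphisms of Lemma \ref{4}, and the construction in Lemmas \ref{5}--\ref{8} showing that $\tau_{\leq -1}:\mathcal{F}\to\mathcal{F}[1]$ is an equivalence, which also gives density (Proposition \ref{12}). Moreover, because the extra summand is zero on $\mathcal{F}$ and $X[m+1]$ leaves $\mathcal{F}$, ``symmetry of the right-hand side under $D$'' does not yield the $(m+1)$-Calabi--Yau property; the paper obtains it separately (Proposition \ref{11}) from the non-degenerate bifunctorial bilinear form of Amiot's Section 1, whose non-degeneracy in turn needs Hom-finiteness of $\mathrm{per}\,A$.

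Two further gaps. First, Hom-finiteness of the first summand is not ``immediate'' from property (c) and membership in the aisle: one needs every $H^rA$ to be finite-dimensional, and (c) only gives this for $r=0$. This is Lemma \ref{13} of the paper, proved by a genuinely non-trivial downward induction on $r$ that uses the $(m+2)$-Calabi--Yau property of $\mathcal{D}^b(A)$ and Proposition \ref{2}; your proposal skips it entirely. Second, you correctly flag the completeness half of (2) as the main obstacle but leave it unresolved, and the endpoint you aim for (``$X$ concentrated in degree $0$ as a projective $H^0A$-module'') is not the right one: the conclusion must be $X\in\mathrm{add}\,A$ as a dg module. The paper's mechanism is again the $m$ triangles of Lemma \ref{3}: since the $Q_i$ are free, one gets $\mathrm{Ext}^1_{\mathcal{D}}(P_{m-i},A)\simeq\mathrm{Ext}^{m+1-i}_{\mathcal{D}}(X,A)$, and the hypothesized vanishings split the triangles one after another, forcing each $P_i$ and finally $X$ into $\mathrm{add}\,A$. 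Without Lemma \ref{3} (which also underlies full faithfulness), your outline has no concrete handle on either half of the argument.
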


We call ${\mathcal {C}}_A$ the {\em generalized $m$-cluster
category} associated to $A$.

We simply denote ${\mathcal{D}} (A)$ by $\mathcal{D}$, and denote
${\mathcal {C}}_A$ by ${\mathcal {C}}$.

Let ${\mathcal{D}}^{{\leq} {0}}$ (resp. ${\mathcal{D}}^{{\geq}
{1}}$) be the full subcategory of $\mathcal{D}$ whose objects are
the dg modules $X$ such that $H^p X$ vanishes for all $p > 0$ (resp.
$p {\leq} 0$). Similar as in {\cite{Am08}}, the proof of Theorem
$\ref{1}$ also depends on the existence of a canonical $t$-structure
(${\mathcal{D}}^{{\leq} {0}} , {\mathcal{D}}^{{\geq} {1}}$) in
per$A$. For a complex of $k$-modules $X$, we denote by
${\tau}_{{\leq} 0} X$ the subcomplex with $({\tau}_{{\leq} 0} X)^i =
X^i$ for $i < 0$ , $({\tau}_{{\leq} 0} X)^0 = {Ker} d^0$ and zero
otherwise. Set ${\tau}_{{\geq} 1} X = {X/{{\tau}_{{\leq} 0} X}}.$ By
the assumptions on $A$, the canonical inclusion ${\tau}_{{\leq} 0} A
{\rightarrow} A$ is a quasi-isomorphism of dg algebras. Thus, we can
assume that $A^p$ is zero for all $p > 0$.

\begin{prop} [\cite{Am08}]\label{2}
Let $\mathcal{H}$ be the heart of the t-structure,
i.e.~$\mathcal{H}$ is the intersection ${\mathcal{D}}^{{\leq} {0}}
\cap {\mathcal{D}}^{{\geq} {0}}$.

a) The functor $H^0$ induces an equivalence from $\mathcal{H}$ onto
${\rm Mod} H^0 A$ .

b) For all X and Y in $\mathcal{H}$, we have an isomorphism
$${\rm Ext}_{H^0 A}^1 (X , Y) \simeq {\rm Hom}_{\mathcal{D}} (X , Y [1]).$$

\end{prop}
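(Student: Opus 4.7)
The plan is to exploit the concentration of $A$ in nonpositive degrees to construct an explicit quasi-inverse to $H^0$, and then to deduce (b) from the standard identification of $\Ext^1$ in the heart of a $t$-structure with morphisms shifted by one in the ambient derived category.

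For (a), the first step is to note that, because $A^p=0$ for all $p>0$, the differential from $A^0$ lands in $A^1 = 0$, so that $H^0 A = A^0/dA^{-1}$ and the projection $\pi: A \to H^0 A$ is a surjective morphism of dg $k$-algebras. Pulling back along $\pi$ defines a functor $F : {\rm Mod}\,H^0 A \to \mathcal{H}$ sending an $H^0 A$-module $M$ to $M$ regarded as a dg $A$-module concentrated in degree zero; the pullback is well defined because $A^{<0}$ must act as zero on such a complex for degree reasons. The composite $H^0 \circ F$ is manifestly the identity. For the other composite, I would take $X \in \mathcal{H}$, choose a representative with $X^p=0$ for $p>0$, and consider the canonical surjection $p : X \to X^0/dX^{-1}$ with target placed in degree zero. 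One checks directly that $p$ is a morphism of dg $A$-modules; its kernel is the complex $\cdots \to X^{-2} \to X^{-1} \to dX^{-1} \to 0$, whose potentially nonzero cohomology groups are precisely $H^q X$ for $q < 0$, and so vanish by hypothesis. Hence $p$ is a quasi-isomorphism, establishing $X \simeq F H^0 X$ in $\mathcal{D}$.

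For (b), I would invoke the general principle that for the heart $\mathcal{H}$ of any $t$-structure the natural map $\Ext^1_{\mathcal{H}}(X, Y) \to \Hom_{\mathcal{D}}(X, Y[1])$ is a bijection. A short exact sequence $0 \to Y \to E \to X \to 0$ in $\mathcal{H}$ produces a triangle in $\mathcal{D}$ whose connecting morphism lies in $\Hom_{\mathcal{D}}(X, Y[1])$; conversely, any $f : X \to Y[1]$ completes to a triangle $Y \to E_f \to X \xrightarrow{f} Y[1]$, and the long exact cohomology sequence forces $E_f \in \mathcal{H}$ because both $X$ and $Y$ lie there. These two assignments are mutually inverse, and the equivalence from (a) identifies $\Ext^1_{\mathcal{H}}(X, Y)$ with $\Ext^1_{H^0 A}(X, Y)$.

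The main obstacle is the verification in (a) that the projection $X \to H^0 X$ really is a morphism of dg $A$-modules: this is precisely where the hypothesis that $A$ is concentrated in nonpositive degrees is used essentially, since it is exactly what makes $A^{<0}$ annihilate any complex concentrated in degree zero and hence makes the pullback along $\pi$ compatible with the $A$-action on the chosen representative of $X$. Once this is granted, (a) reduces to routine cohomological bookkeeping and (b) follows by standard $t$-structure arguments.
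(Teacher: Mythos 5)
Your proof is correct and follows essentially the standard argument: the paper itself states this proposition without proof, citing \cite{Am08}, and the proof there proceeds exactly as you do, by restricting along the dg algebra morphism $A\to H^0A$ (available because $A$ may be assumed concentrated in nonpositive degrees) to obtain a quasi-inverse to $H^0$, and by the usual identification of $\operatorname{Ext}^1$ in the heart of a $t$-structure with $\operatorname{Hom}_{\mathcal D}(X,Y[1])$. The only point you gloss over is the naturality of the zig-zag $X\leftarrow\tau_{\leq 0}X\to H^0X$, but this is immediate from the functoriality of the truncations.
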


\begin{lem}\label{13}
For each integer r, the space $H^r A$ is finite-dimensional.
\end{lem}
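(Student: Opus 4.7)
The only non-trivial cases are $r \leq 0$, since $r > 0$ is covered by (b) and $r = 0$ by (c). For $r < 0$, my plan is descending induction on $r$: assuming $H^s A$ is finite-dimensional for every integer $s$ with $r < s \leq 0$, the smart truncation $\tau_{\geq r+1} A$ has bounded, finite-dimensional homology and hence lies in $\mathcal{D}^b(A)$, and my goal is to deduce from this that $H^r A$ is finite-dimensional as well.

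The idea is to test against an arbitrary finite-dimensional simple $H^0 A$-module $S$, which lies in $\mathcal{D}^b(A)$, and exploit Lemma \ref{17} twice. Applied to $M = S$, $L = A$, the lemma yields $\RHom_{\mathcal{D}}(S, A) \simeq DS[-m-2]$, so $\Hom_{\mathcal{D}}(S, A[s])$ is finite-dimensional for every $s$; combined with the Hom-finiteness of $\mathcal{D}^b(A)$ noted after Lemma \ref{17} and the induction hypothesis, this also makes $\Hom_{\mathcal{D}}(S, \tau_{\geq r+1} A[s])$ finite-dimensional for every $s$. Applying $\RHom_{\mathcal{D}}(S, -)$ to the truncation triangle $\tau_{\leq r} A \to A \to \tau_{\geq r+1} A \to \tau_{\leq r} A[1]$ and reading off the long exact sequence then forces $\Hom_{\mathcal{D}}(S, \tau_{\leq r} A[s])$ to be finite-dimensional for every $s$; a second application of Lemma \ref{17} (now with $M = S$, $L = \tau_{\leq r} A[s]$) converts this into the statement that $\Hom_{\mathcal{D}}(\tau_{\leq r} A, S[u])$ is finite-dimensional for every $u$.

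To close the induction, I specialise to $u = -r$. The truncation triangle $\tau_{\leq r-1} A \to \tau_{\leq r} A \to H^r A[-r] \to \tau_{\leq r-1} A[1]$, together with the $t$-structure vanishings $\Hom_{\mathcal{D}}(\tau_{\leq r-1} A, S[-r]) = 0 = \Hom_{\mathcal{D}}(\tau_{\leq r-1} A[1], S[-r])$ (since $\tau_{\leq r-1} A \in \mathcal{D}^{\leq r-1}$ while $S[-r], S[-r-1] \in \mathcal{D}^{\geq r}$), gives
\[
\Hom_{\mathcal{D}}(\tau_{\leq r} A, S[-r]) \simeq \Hom_{\mathcal{D}}(H^r A[-r], S[-r]) \simeq \Hom_{H^0 A}(H^r A, S)
\]
via Proposition \ref{2}. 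Thus $\Hom_{H^0 A}(H^r A, S)$ is finite-dimensional for every simple $H^0 A$-module $S$, which is to say that the top $H^r A / \mathrm{rad}(H^0 A) \cdot H^r A$ is finite-dimensional. Since the radical of the finite-dimensional (Artinian) algebra $H^0 A$ is nilpotent, a standard Nakayama argument then shows $H^r A$ to be finitely generated, and hence finite-dimensional, as an $H^0 A$-module. The main subtlety I expect is arranging the double application of Calabi-Yau duality cleanly, so that it transports finiteness from the directly controlled $\RHom(S, A)$ onto the Hom-spaces out of $\tau_{\leq r} A$ where $H^r A$ can be read off through the $t$-structure.
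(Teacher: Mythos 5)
Your proof is correct, and it runs on the same engine as the paper's --- descending induction on the degree, the truncation triangles, and the Calabi--Yau duality of Lemma \ref{17} transporting finiteness between Hom-spaces out of truncations of $A$ and Hom-spaces into finite-dimensional modules --- but it is organized genuinely differently at two points. First, where the paper carries a two-part inductive statement (finite-dimensionality of $H^{m-r-1}A$ together with that of $\Hom_{\mathcal D}(\tau_{\leq m-r-1}A, M[p])$ for all $p\geq r+1$, with careful bookkeeping of which shifts are admissible at each stage), you package the inductive hypothesis into the single observation that $\tau_{\geq r+1}A$ lies in $\mathcal{D}^b(A)$ and then invoke the Hom-finiteness of $\mathcal{D}^b(A)$ wholesale; this buys you finiteness of $\Hom_{\mathcal D}(\tau_{\leq r}A, S[u])$ in \emph{all} degrees $u$ at once and makes the degree bookkeeping disappear. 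Second, your endgame differs: the paper tests against an arbitrary $M$ in ${\rm mod}\, H^0A$ and specializes to $M=H^0A$ at the last moment, reading off $D H^jA \simeq D\Hom_{H^0A}(H^0A,H^jA)$ directly, whereas you test against simple modules $S$ and obtain only finiteness of $\Hom_{H^0A}(H^rA,S)$, i.e.\ of the top of $H^rA$; you therefore pay for the streamlined induction with the extra (correct) Nakayama-type argument, using nilpotency of ${\rm rad}\, H^0A$ to pass from a finite-dimensional top to a finitely generated, hence finite-dimensional, module. Both routes are valid; yours is cleaner in the inductive bookkeeping and marginally longer in the final module-theoretic step.
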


\begin{proof}
By our assumptions, the space $H^r A$ is zero for every positive
integer $r$ and $H^0 A$ is finite-dimensional. We use induction on
$r$ to show that

{\it For each $r \geq m-1$, the space $H ^{m-r-1} A$ is
finite-dimensional, and for each $M \in {\rm mod} H^0 A$, the space
${\rm Hom}_{\mathcal{D}} ({\tau}_{{\leq} m-r-1} A , M [p])$ is
finite-dimensional for all $p {\geq} r+1$ .}

For $r = m - 1$, the space $H^0 A$ is finite-dimensional by
assumption. Since ${\tau}_{{\leq} 0} A$ is quasi-isomorphic to $A$,
the following isomorphisms hold
$${\rm Hom}_{\mathcal{D}} ({\tau}_{{\leq}
0} A , M [p]) \simeq {\rm Hom}_{\mathcal{D}} ( A , M [p]) \simeq H^0
(M [p]) = 0$$ for $p {\geq} (m-1+1) {\geq} 1$ and $M \in {\rm mod}
H^0 A$.

Suppose that the property holds for $r ( {\geq}{m-1})$. Applying the
functor ${\rm Hom}_{\mathcal{D}} (-, M [p])$ to the triangle :
$$(H^{m-r-1} A) [r-m] {\longrightarrow} {{\tau}_{\leq {m-r-2}} A} {\longrightarrow} {{\tau}_{\leq {m-r-1}} A} {\longrightarrow}
{(H^{m-r-1} A) [1+r-m]},$$ we can get a long exact
sequence$${\ldots}{\rightarrow}({\tau}_{{\leq} m-r-1} A, M[p])
{\rightarrow}({\tau}_{{\leq} m-r-2} A, M[p])
{\rightarrow}((H^{m-r-1} A) [r-m], M[p]) {\rightarrow} {\ldots},$$
where we write $(,)$ for ${\rm Hom}_{\mathcal{D}} (,)$.

By induction, the space ${\rm Hom}_{\mathcal{D}} ({\tau}_{{\leq}
m-r-1} A , M [p])$ is finite-dimensional for all $p {\geq} r+1$.
Note that $M [p]$ is in ${\mathcal{D}}^b (A)$, so we can use the
$(m+2)$-Calabi-Yau property of ${\mathcal{D}}$$^b (A)$.

If $p {\geq} r+3,$ then the space
$${\rm
Hom}_{\mathcal{D}} ( (H^{m-r-1} A) [r-m], M [p]) \simeq D{\rm
Hom}_{\mathcal{D}} ( M [p], (H^{m-r-1} A) [r-m+m+2])$$
$$\quad\quad\quad\quad\quad\quad\quad\quad\quad\quad\quad \simeq D{\rm Hom}_{\mathcal{D}} ( M [p-r-2], H^{m-r-1}
A)$$ vanishes when $p-r-2 \,{\geq} 1$, since $M [p-r-2]$ belongs to
${\mathcal{D}}^{{\leq} 2+r-p} \subseteq {\mathcal{D}}^{{\leq} -1}$
while $H^{m-r-1} A$ belongs to $\mathcal{H}$.

If $p = r+2,$ since by induction $H^{m-r-1} A$ is
finite-dimensional, we have that $${\rm Hom}_{\mathcal{D}} (
(H^{m-r-1} A) [r-m], M [p]) \simeq D{\rm Hom}_{H^0 A} ( M, H^{m-r-1}
A)$$ is also finite-dimensional, where the second isomorphism comes
from proposition $\ref{2}$. Therefore, the space ${\rm
Hom}_{\mathcal{D}} ({\tau}_{{\leq} m-r-2} A , M [p])$ is
finite-dimensional for all $p {\geq} r+2$.

Now consider the following diagram

\[
\xymatrix@H=2pc{
 & & M[r+4]& \\
\tau_{\leq {m-r-3}} A \ar[r]& \tau_{\leq {m-r-2}} A \ar[r]
\ar[ru]^{{\rm Hom}-finite} & (H^{m-r-2} A)[2+r-m] \ar[r] \ar@{.>}[u]
& \tau_{\leq {m-r-3}} A [1] . \ar[lu]_{{\rm Hom}-finite} } \]

Since $r+4 {\geq} r+2$ and $r+3 {\geq} r+3$, the spaces ${\rm
Hom}_{\mathcal{D}}( {\tau}_{{\leq} m-r-2} A, M[r+4])$ and ${\rm
Hom}_{\mathcal{D}} ({\tau}_{{\leq} m-r-3}A[1], M [r+4]) (\simeq {\rm
Hom}_{\mathcal{D}} ( {\tau}_{{\leq} m-r-3} A, M [r+3]))$ are
finite-dimensional for each $M \in {\rm mod} H^0 A$. Thus, the space
$${\rm Hom}_{\mathcal{D}} ( (H^{m-r-2} A) [2+r-m], M [r+4]) \simeq
{\rm Hom}_{\mathcal{D}} ( H^{m-r-2} A, M [m+2])$$
$$\simeq D {\rm Hom}_{\mathcal{D}} ( M, H^{m-r-2} A) \simeq D {\rm
Hom}_{H^0 A} ( M, H^{m-r-2} A)$$ is finite-dimensional for each $M
\in {\rm mod} H^0 A$. In particular, if we choose $M = H^0 A$, we
find that $D H^{m-r-2} A$ is finite-dimensional. Therefore, the
space $H^{m-r-2} A$ is finite-dimensional.
\end{proof}

The subcategory of $({\mbox{per}} A)^{op} \times {\mbox{per}} A$
whose objects are the pairs $(X,Y)$ such that, the space ${\rm
Hom}_{\mathcal {D}} (X,Y)$ is finite-dimensional, is stable under
extensions and passage to direct factors. By lemma \ref{13}, the
space $H^r A ( \simeq {\rm Hom}_{\mathcal {D}} (A, A[r]))$ is
finite-dimensional. As a result, the following proposition holds.
\begin{prop} \label{10}
The category ${\rm per}A$ is {\rm Hom}-finite.
\end{prop}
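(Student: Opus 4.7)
The plan is to exploit the observation already made in the paragraph immediately before the statement: the full subcategory $\mathcal{S} \subset ({\rm per} A)^{op} \times {\rm per} A$ whose objects are pairs $(X,Y)$ with ${\rm Hom}_{\mathcal{D}}(X,Y)$ finite-dimensional is stable under extensions and under passage to direct factors in each of the two variables separately. This is immediate from the long exact sequence of ${\rm Hom}$ groups attached to a triangle, together with the fact that extensions and direct summands of finite-dimensional vector spaces are again finite-dimensional.

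First, I would combine Lemma~\ref{13} with the identification ${\rm Hom}_{\mathcal{D}}(A, A[r]) \simeq H^r A$ to conclude that $(A, A[r]) \in \mathcal{S}$ for every integer $r$. Then I would carry out a d\'evissage in the first variable: for each fixed $r$, let $\mathcal{S}_r$ denote the full subcategory of ${\rm per} A$ consisting of those $X$ with $(X, A[r]) \in \mathcal{S}$. By the stability property, $\mathcal{S}_r$ is a triangulated subcategory of ${\rm per} A$ closed under direct summands; it contains $A$ by the previous observation, so it coincides with ${\rm per} A$, since the latter is by definition the smallest such subcategory of $\mathcal{D}(A)$ containing $A$. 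Hence $(X, A[r]) \in \mathcal{S}$ for every $X \in {\rm per} A$ and every $r$.

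Repeating the same d\'evissage in the second variable then finishes the argument: for any fixed $X \in {\rm per} A$, the full subcategory $\mathcal{T}_X \subset {\rm per} A$ of objects $Y$ with $(X,Y) \in \mathcal{S}$ is again triangulated and closed under direct summands, and it contains every shift $A[r]$ by the previous step; therefore $\mathcal{T}_X = {\rm per} A$, which yields the claimed Hom-finiteness.

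No genuine obstacle remains at this stage: all the substantive work has already been done in Lemma~\ref{13}, where the $(m+2)$-Calabi-Yau property of Lemma~\ref{17} and the heart of Proposition~\ref{2} were used to bootstrap finite-dimensionality from $H^0 A$ down to every $H^r A$. The present step is a purely formal two-variable d\'evissage inside the triangulated subcategory ${\rm per} A$.
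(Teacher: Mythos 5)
Your proposal is correct and is essentially the paper's own argument: the paper likewise notes that the pairs $(X,Y)$ with $\Hom_{\mathcal D}(X,Y)$ finite-dimensional form a subcategory stable under extensions and direct factors, feeds in $\Hom_{\mathcal D}(A,A[r])\simeq H^rA$ from Lemma~\ref{13}, and concludes by the same (implicit) two-variable d\'evissage over the generator $A$ of ${\rm per}A$. The only cosmetic point is that your $\mathcal S_r$ for a single fixed $r$ is closed under shifts only because the hypothesis holds for all $r$ simultaneously, so one should really d\'evisser with $\bigcap_r \mathcal S_r$.
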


\begin{lem} [\cite{Am08}]\label{6}
For each $X$ in ${\rm per}A$, there exist integers $N$ and $M$ such
that $X$ belongs to ${\mathcal{D}}^{{\leq} N} \cap \,
^{\perp}{{\mathcal{D}}^{{\leq} M}}$. Moreover, the t-structure on
$\mathcal{D}$ canonically restricts to ${\rm per}A$.
\end{lem}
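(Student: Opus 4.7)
The plan is to deduce the first statement by a dévissage along the generators of $\per A$, and then to derive the second statement by iteratively peeling off top cohomologies, using the Hom-finiteness of $\per A$ (Proposition~\ref{10}) and the inclusion $\mathcal{D}^b(A)\subseteq\per A$ remarked after Lemma~\ref{17}.

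\medskip

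For the boundedness assertion, I would let $\mathcal{T}$ be the full subcategory of $\per A$ consisting of those $X$ for which integers $N,M$ exist with $X\in\mathcal{D}^{\leq N}\cap{}^{\perp}\mathcal{D}^{\leq M}$. A routine check shows that $\mathcal{T}$ is a thick triangulated subcategory of $\per A$: it is stable under shifts because $X[k]\in\mathcal{D}^{\leq N-k}\cap{}^{\perp}\mathcal{D}^{\leq M-k}$; it is stable under extensions because given a triangle $X\to Y\to Z\to X[1]$ with $X,Z\in\mathcal{T}$ one can take $N=\max(N_X,N_Z)$ and $M=\min(M_X,M_Z)$ and use that $\mathcal{D}^{\leq N}$ and ${}^{\perp}\mathcal{D}^{\leq M}$ are both closed under extensions (the latter via the long exact sequence obtained by applying $\Hom_{\mathcal{D}}(-,W)$ to the triangle, for each $W\in\mathcal{D}^{\leq M}$); and it is obviously stable under direct summands. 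It then suffices to verify $A\in\mathcal{T}$: property (b) gives $A\in\mathcal{D}^{\leq 0}$, and $\Hom_{\mathcal{D}}(A,Y)\cong H^0 Y$ vanishes for every $Y\in\mathcal{D}^{\leq -1}$, so $A\in{}^{\perp}\mathcal{D}^{\leq -1}$. Since $\per A$ is the smallest thick subcategory containing $A$, we conclude $\mathcal{T}=\per A$.

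\medskip

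For the restriction assertion, it suffices to show $\tau_{\leq 0}X\in\per A$ for every $X\in\per A$; the companion statement for $\tau_{\geq 1}X$ then follows from the truncation triangle $\tau_{\leq 0}X\to X\to\tau_{\geq 1}X\to(\tau_{\leq 0}X)[1]$ together with the fact that $\per A$ is triangulated. By the first part, choose $N\geq 0$ with $X\in\mathcal{D}^{\leq N}$. By Proposition~\ref{10} the space $H^N X\cong\Hom_{\mathcal{D}}(A,X[N])$ is finite-dimensional, and, viewed as a dg module concentrated in degree $N$, lies in $\mathcal{D}^b(A)\subseteq\per A$. The truncation triangle
\[
\tau_{\leq N-1}X\longrightarrow X\longrightarrow (H^N X)[-N]\longrightarrow (\tau_{\leq N-1}X)[1]
\]
therefore realizes $\tau_{\leq N-1}X$ as a cone between two objects of $\per A$, whence $\tau_{\leq N-1}X\in\per A$. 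Iterating this procedure $N$ times strips the cohomologies in degrees $N,N-1,\ldots,1$ and yields $\tau_{\leq 0}X\in\per A$, as required.

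\medskip

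The core difficulty, as in Amiot's original argument, is ensuring that the top cohomology $H^N X$ can be realized as an object of $\per A$; this is where the assumptions on $A$ really come into play, through the Hom-finiteness of $\per A$ (itself a consequence of Lemma~\ref{13} and hence of the $(m+2)$-Calabi-Yau hypothesis) and through the inclusion $\mathcal{D}^b(A)\subseteq\per A$ coming from Lemma~\ref{17}. Once these two inputs are secured, the remainder is a formal dévissage in the triangulated category via the canonical truncation triangles.
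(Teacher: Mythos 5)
Your argument is correct and is essentially the standard one: the paper does not reprove this lemma but cites it from Amiot, whose proof proceeds by the same d\'evissage (the objects admitting such $N,M$ form a thick subcategory containing $A$) and obtains the restriction of the $t$-structure by observing that the upper truncation of a perfect object has finite-dimensional total homology and hence lies in $\mathcal{D}^b(A)\subseteq{\rm per}A$. Your step-by-step peeling of the top cohomologies is a harmless repackaging of that last point, and your appeals to Proposition~\ref{10} and to the inclusion $\mathcal{D}^b(A)\subseteq{\rm per}A$ are exactly the inputs needed and involve no circularity.
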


An obvious remark here is that the first statement in lemma \ref{6}
has the following equivalent saying:\,{\it there exists a positive
integer $N_0$ such that $X$ belongs to ${\mathcal {D}}^{\leq n} \cap
{^{\perp}{\mathcal{D}}^{{\leq} {-n}}}$ for any $n \geq {N_0}$}.

\begin{prop}\label{11}
The category $\mathcal {C}$ is $(m+1)$-Calabi-Yau.
\end{prop}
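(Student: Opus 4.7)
The plan is to prove the $(m+1)$-Calabi-Yau duality
\[
D\,\Hom_{\mathcal{C}}(\pi X,\pi Y)\simeq\Hom_{\mathcal{C}}(\pi Y,\pi X[m+1])
\]
for $X,Y\in\per A$ by comparing two five-term exact sequences built from the canonical $t$-structure, generalizing Amiot's argument for the case $m=1$. The key input is the ``half-Calabi-Yau'' duality of Lemma~\ref{17}, which applies whenever one argument lies in $\mathcal{D}^b(A)$. Crucially, for any $X\in\per A$ and any $N$, the truncation $\tau_{\geq -N+1}X$ is bounded below by construction and bounded above by Lemma~\ref{6}, hence lies in $\mathcal{D}^b(A)$.

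First I would compute $\Hom$-spaces in $\mathcal{C}$ via truncations. The morphisms $\tau_{\leq -N}X\to X$ have cone $\tau_{\geq -N+1}X\in\mathcal{D}^b(A)$, so the system $\{\tau_{\leq -N}X\to X\}_N$ is cofinal in the left-fraction description of the Verdier quotient, yielding $\Hom_{\mathcal{C}}(\pi X,\pi Y[r])=\mathrm{colim}_N\,\Hom(\tau_{\leq -N}X,Y[r])$. The colimit stabilizes: applying $\Hom(-,Y[r])$ to $\tau_{\leq -N-1}X\to\tau_{\leq -N}X\to(H^{-N}X)[N]$ reduces stabilization to the vanishing of $\Hom((H^{-N}X)[N],Y[r+\varepsilon])$ for $\varepsilon\in\{0,1\}$. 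Since $H^{-N}X$ lies in $\mathcal{H}\subset\mathcal{D}^b(A)$, Lemma~\ref{17} rewrites this space as $D\,\Hom(Y,H^{-N}X[N+m+2-r-\varepsilon])$, and the reformulation $Y\in{}^{\perp}\mathcal{D}^{\leq -n}$ (for $n\gg 0$) following Lemma~\ref{6} produces the required vanishing once $N$ is large. Hence $\Hom_{\mathcal{C}}(\pi X,\pi Y[r])\cong\Hom(\tau_{\leq -N}X,Y[r])$ for $N\gg 0$.

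The heart of the argument is to produce two matching five-term exact sequences. Applying $\Hom(-,Y)$ to $\tau_{\leq -N}X\to X\to\tau_{\geq -N+1}X$, dualizing, and applying Lemma~\ref{17} to the $\mathcal{D}^b$-terms — identifying $D\Hom(\tau_{\geq -N+1}X,Y[k])$ with $\Hom(Y,\tau_{\geq -N+1}X[m+2-k])\cong\Hom(Y,X[m+2-k])$ by a stabilization identical to the previous step — yields
\[
D\Hom(X,Y[1])\to\Hom(Y,X[m+1])\to D\Hom_{\mathcal{C}}(\pi X,\pi Y)\to D\Hom(X,Y)\to\Hom(Y,X[m+2]).
\]
Symmetrically, applying $\Hom(-,X[m+1])$ to $\tau_{\leq -N}Y\to Y\to\tau_{\geq -N+1}Y$ and invoking Lemma~\ref{17} together with the analogous stabilization for $\Hom(X,-)$ produces
\[
D\Hom(X,Y[1])\to\Hom(Y,X[m+1])\to\Hom_{\mathcal{C}}(\pi Y,\pi X[m+1])\to D\Hom(X,Y)\to\Hom(Y,X[m+2]).
\]

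A naturality check — the principal technical obstacle — shows that the outer four maps coincide under these identifications; both connecting morphisms arise, via the natural transformations of Lemma~\ref{17}, from the same Yoneda-type pairing between the two truncation triangles. The Five Lemma then delivers the isomorphism $D\Hom_{\mathcal{C}}(\pi X,\pi Y)\simeq\Hom_{\mathcal{C}}(\pi Y,\pi X[m+1])$, which is the desired $(m+1)$-Calabi-Yau property.
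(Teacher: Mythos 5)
Your strategy is essentially the paper's proof with the black box opened: the paper constructs a bifunctorial bilinear form $\beta'_{X,Y}\colon \Hom_{\mathcal{C}}(X,Y)\times\Hom_{\mathcal{C}}(Y,X[m+1])\to k$ from the non-degenerate form given by the $(m+2)$-Calabi-Yau property of $\mathcal{D}^b(A)$ inside $\mathcal{D}$ by citing Section~1 of \cite{Am08}, and then proves non-degeneracy using exactly your truncation input, namely that $\tau_{>r}Y$ lies in $\mathcal{D}^b(A)$ and gives an injection $\Hom_{\mathcal{D}}(X,Y)\hookrightarrow\Hom_{\mathcal{D}}(X,\tau_{>r}Y)$ (a local $\mathcal{D}^b(A)$-envelope), together with the Hom-finiteness of $\per A$. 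Your computation of $\Hom_{\mathcal{C}}$ as a stabilized colimit over truncations, and the two five-term exact sequences with matching outer terms, are correct and are precisely the guts of that non-degeneracy argument.

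The one genuine gap is at the very end. Two exact sequences whose outer four terms agree need not have isomorphic middle terms: the Five Lemma requires a \emph{morphism} of exact sequences, i.e.\ a map $D\Hom_{\mathcal{C}}(\pi X,\pi Y)\to\Hom_{\mathcal{C}}(\pi Y,\pi X[m+1])$ already in hand, together with commutativity of both squares; verifying that ``the outer four maps coincide'' does not manufacture this middle map. What is missing is exactly the bifunctorial pairing $\beta'$ of \cite{Am08}, Section~1, which is defined on fractions by composing a representative $\tau_{\leq -N}X\to Y$ of a morphism in $\mathcal{C}$ with a representative of the other and evaluating under the form $\beta$ coming from Lemma~\ref{17}; bifunctoriality of $\beta$ is what makes your ladder commute. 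With that pairing in place your two sequences close the argument; without it the Five Lemma cannot be invoked, so you should either construct the pairing explicitly or cite the general quotient-category lemma that packages it.
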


\begin{proof}
Let $\mathcal {T}$ denote the category ${\rm per}A$. Let $\mathcal
{N}$ denote $\mathcal {D}$$^b (A)$, which is a thick subcategory of
$\mathcal {T}$. Because of the Calabi-Yau property, that is,
\begin{center} $D {\rm Hom}_{\mathcal{D}} (N, X) \simeq {\rm
Hom}_{\mathcal{D}} (X, N[m+2]) \quad {\mbox {for each}} \, \,  N \in
{\mathcal {D}}$$^b (A)$ and $X \in \mathcal {D}$,\end{center} there
is a bifunctorial non-degenerate bilinear form :
$$\beta_{N,X} : {\rm
Hom}_{\mathcal{D}} (N, X) \times {\rm Hom}_{\mathcal{D}} (X, N[m+2])
\longrightarrow k$$ Therefore, by section 1 in \cite{Am08}, there
exists a bifunctorial form :
$$\beta'_{X,Y} : {\rm Hom}_{\mathcal{C}} (X, Y) \times {\rm
Hom}_{\mathcal{C}} (Y, X[m+1]) \longrightarrow k \quad {\mbox {for}}
\, \, X , Y \in {\mathcal {C}}.$$ By lemma \ref{6}, the object $X$
belongs to ${^{\perp}{\mathcal {D}}^{\leq r}}$ for some integer $r$.
Thus, we obtain an injection $$0 \longrightarrow {\rm Hom}_{\mathcal
{D}} (X, Y) \longrightarrow {\rm Hom}_{\mathcal {D}} (X, {\tau_{>
r}} Y),$$ and the object $\tau_{>r}Y$ is in $ {\mathcal {D}}$$^b
(A)$. Since ${\rm per}A$ is Hom-finite by proposition \ref{10},
still using section 1 in \cite{Am08}, we can get that $\beta'_{X,Y}$
is non-degenerate. Therefore, we have
$$D {\rm Hom}_{\mathcal{C}} (X, Y) \simeq {\rm Hom}_{\mathcal{C}}
(Y, X[m+1]) \quad {\mbox {for}} \, \, X , Y \in {\mathcal {C}}.$$
Thus, the category $\mathcal {C}$ is $(m+1)$-Calabi-Yau.
\end{proof}

Let $\mathcal{F}$ be the full subcategory ${\mathcal{D}}^{{\leq}
{0}} \cap \, ^{\perp}{\mathcal{D}}^{{\leq} {-m-1}} \cap \, {\rm
per}A $ of ${\rm per}A$.

\begin{lem}\label{3}
For each object $X$ of $\mathcal{F}$, there exist $m$ triangles (
which are not unique in general )
$$P_1 {\longrightarrow} Q_0 {\longrightarrow} X {\longrightarrow}
P_1 [1] ,$$
$$P_2 {\longrightarrow} Q_1 {\longrightarrow} P_1
{\longrightarrow} P_2 [1] ,$$
$$\ldots \quad \ldots$$
$$P_m {\longrightarrow} Q_{m-1} {\longrightarrow} P_{m-1}
{\longrightarrow} P_m [1] ,$$ where $Q_0, Q_1, \ldots , Q_{m-1}$ and
$P_m$ are in add$A$.
\end{lem}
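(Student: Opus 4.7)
The plan is to build the triangles one at a time. Setting $P_0 := X$, at step $0 \leq i \leq m-1$ I would choose $Q_i \in \mathrm{add}\,A$ together with a morphism $Q_i \to P_i$ whose induced map $H^0(Q_i) \to H^0(P_i)$ is a surjection of $H^0 A$-modules, and then complete to a distinguished triangle $P_{i+1} \to Q_i \to P_i \to P_{i+1}[1]$. The existence of such a $Q_i$ with finitely many summands uses that $H^0(P_i) \simeq \mathrm{Hom}_{\mathcal{D}}(A, P_i)$ is finite-dimensional over $k$, which follows from Hom-finiteness of $\mathrm{per}\,A$ (Proposition~\ref{10}); one can then take $Q_i = A^{n_i}$ for suitable $n_i$ and lift a surjection of $H^0 A$-modules using the equivalence of Proposition~\ref{2}.

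The inductive hypothesis I would carry along $P_i$ is that $P_i \in \mathrm{per}\,A \cap \mathcal{D}^{\leq 0} \cap {}^{\perp}\mathcal{D}^{\leq -(m+1-i)}$, the case $i = 0$ being exactly the definition of $\mathcal{F}$. For the inductive step, membership in $\mathrm{per}\,A$ is automatic, and the bound $P_{i+1} \in \mathcal{D}^{\leq 0}$ follows from the cohomology long exact sequence of $P_{i+1} \to Q_i \to P_i$: since $Q_i, P_i \in \mathcal{D}^{\leq 0}$ and $H^0(Q_i) \to H^0(P_i)$ is surjective, one deduces $H^p(P_{i+1}) = 0$ for all $p \geq 1$. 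For the orthogonality, apply $\mathrm{Hom}_{\mathcal{D}}(-, Y)$ to the triangle for $Y \in \mathcal{D}^{\leq -(m-i)}$: the term $\mathrm{Hom}(Q_i, Y) \simeq H^0(Y)^{n_i}$ vanishes because $Y \in \mathcal{D}^{\leq -1}$ (as $m-i \geq 1$), while $\mathrm{Hom}(P_i, Y[1]) = 0$ because $Y[1] \in \mathcal{D}^{\leq -(m+1-i)}$ and $P_i$ lies in the inductive left-orthogonal; hence $\mathrm{Hom}(P_{i+1}, Y) = 0$.

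After $m$ iterations, $P_m$ lies in $\mathrm{per}\,A \cap \mathcal{D}^{\leq 0} \cap {}^{\perp}\mathcal{D}^{\leq -1}$, and it remains to show $P_m \in \mathrm{add}\,A$. I would run the same construction one more time: pick $Q \in \mathrm{add}\,A$ with $Q \to P_m$ surjective on $H^0$, and complete to a triangle $R \to Q \to P_m \to R[1]$. The long exact sequence argument again forces $R \in \mathcal{D}^{\leq 0}$, hence $R[1] \in \mathcal{D}^{\leq -1}$, and by the orthogonality of $P_m$ one gets $\mathrm{Hom}_{\mathcal{D}}(P_m, R[1]) = 0$. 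The connecting map in the triangle therefore vanishes, the triangle splits as $Q \simeq P_m \oplus R$, and $P_m$ is exhibited as a direct summand of an object of $\mathrm{add}\,A$. The main conceptual point is identifying the correct pair of t-structural invariants to propagate, in particular the shifted orthogonality that "uses up" one unit of perpendicularity per step; once that bookkeeping is in place, both the inductive step and the concluding splitting are direct consequences of the long exact sequence together with the defining property $\mathrm{Hom}_{\mathcal{D}}(A, -) = H^0(-)$.
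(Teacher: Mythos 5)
Your construction of the $m$ triangles and the inductive bookkeeping $P_i \in \mathrm{per}\,A \cap \mathcal{D}^{\leq 0} \cap {}^{\perp}\mathcal{D}^{\leq -(m+1-i)}$ are exactly what the paper does (Steps 1 and 2 of its proof of Lemma \ref{3}), and your long exact sequence arguments are correct. Where you genuinely diverge is the final step showing $P_m \in \mathrm{add}\,A$. The paper first proves that $H^0 P_m$ is a projective $H^0 A$-module (by computing $\mathrm{Ext}^1_{\mathcal{H}}(H^0 P_m, M) = 0$ via the truncation triangle $\tau_{\leq -1}P_m \to P_m \to H^0 P_m$), then lifts an isomorphism $H^0 P \simeq H^0 P_m$ with $P \in \mathrm{add}\,A$ to a map $P \to P_m$, and shows the third term $E$ of the resulting triangle vanishes outright, so $P_m \simeq P$. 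You instead run the syzygy construction one more time and observe that, since $P_m \in {}^{\perp}\mathcal{D}^{\leq -1}$ and the cone $R$ lies in $\mathcal{D}^{\leq 0}$, the connecting map $P_m \to R[1]$ vanishes and the triangle splits, exhibiting $P_m$ as a direct summand of $Q \in \mathrm{add}\,A$; since $\mathrm{add}\,A$ is closed under summands, this suffices. Your ending is shorter and avoids the detour through projectivity in the heart; the paper's version buys the extra information that the complement is actually zero (so the chosen cover is an isomorphism on the nose), which is not needed for the statement. Both are valid.
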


\begin{proof}
For each object $X$ in ${\rm per}A$, the following isomorphisms
$${\rm Hom}_{\mathcal{D}} (A , X) \simeq H^0 X \simeq {\rm Hom}_{H^0
A} (H^0 A , H^0 X)$$ is true. Therefore, we can find a morphism $Q_0
{\longrightarrow} X$, with $Q_0$ a free dg $A$-module, which induces
an epimorphism $H^0 Q_0 {\twoheadrightarrow} H^0 X$. Take $X$ in
${\mathcal{F}}$ and form a triangle $$P_1 {\longrightarrow} Q_0
{\longrightarrow} X {\longrightarrow} P_1 [1] .$$

\smallskip

Step 1. {\it The object $P_1$ is in ${\mathcal{D}}^{{\leq} {0}} \cap
\, ^{\perp}{\mathcal{D}}^{{\leq} {-m}} \cap {\rm per}A$.}

Since the objects $Q_0$ and $X$ are in ${\mathcal{D}}^{{\leq} 0}$,
$P_1$ is in ${\mathcal{D}}^{{\leq} 1}$. Moreover, we have a long
exact sequence $$\ldots \, {\rightarrow} H^0 Q_0
{\twoheadrightarrow} H^0 X {\rightarrow} H^1 P_1 {\rightarrow} H^1
Q_0 = 0 .$$ It follows that $H^1 P_1 = 0$. Thus, the object $P_1$
belongs to ${{\mathcal{D}}^{{\leq} 0}}$.

Let $Y$ be in ${\mathcal{D}}^{\leq -m}$. Consider the long exact
sequence
$$\ldots \, \longrightarrow {\rm Hom}_{\mathcal{D}} ( Q_0, Y)
\longrightarrow {\rm Hom}_{\mathcal{D}} ( P_1, Y) \longrightarrow
{\rm Hom}_{\mathcal{D}} ( X[-1], Y) \longrightarrow \ldots .$$ Since
$X$ belongs to $^{\perp}{\mathcal{D}}^{{\leq} {-m-1}}$ and $Y$ is in
${\mathcal{D}}^{{\leq} {-m}}$, the space ${\rm Hom}_{\mathcal{D}} (
X[-1], Y)$ vanishes. The object $Q_0$ is free and $H^0 Y$ is zero,
so the space ${\rm Hom}_{\mathcal{D}} ( Q_0, Y)$ also vanishes. Thus
$P_1$ belongs to $^{\perp}{\mathcal{D}}^{{\leq} {-m}}$.

Moreover, since per$A$ is closed under extensions in ${\mathcal
{D}}$, the object $P_1$ belongs to ${\rm per}A$. Thus, the object
$P_1$ belongs to ${\mathcal{D}}^{{\leq} {0}} \cap \,
^{\perp}{\mathcal{D}}^{{\leq} {-m}} \cap \, {\rm per}A .$ Similarly
as above, we can find a morphism $Q_1 \longrightarrow P_1$, with
$Q_1$ a free dg $A$-module, which induces an epimorphism $H^0 Q_1
{\twoheadrightarrow} H^0 P_1$. Then we form a triangle
$$P_2 {\longrightarrow} Q_1 {\longrightarrow} P_1 {\longrightarrow}
P_2 [1] .$$

\smallskip

Step 2. {\it For $1 \leq r \leq m$, the object $P_t$ is in
${\mathcal{D}}^{{\leq} {0}} \cap \, ^{\perp}{\mathcal{D}}^{{\leq}
{t-m-1}} \cap \, {\rm per}A$}.

By the same argument as in step 1, we obtain that the object $P_2$
is in ${\mathcal{D}}^{{\leq} {0}} \cap \,
^{\perp}{\mathcal{D}}^{{\leq} {1-m}} \cap \, {\rm per}A$.

In this way, we inductively construct $m$ triangles $$P_1
{\longrightarrow} Q_0 {\longrightarrow} X {\longrightarrow} P_1 [1]
,$$
$$P_2 {\longrightarrow} Q_1 {\longrightarrow} P_1
{\longrightarrow} P_2 [1] ,$$
$$\ldots \quad \ldots$$
$$P_m {\longrightarrow} Q_{m-1} {\longrightarrow} P_{m-1}
{\longrightarrow} P_m [1] ,$$ where $Q_0, Q_1, \ldots , Q_{m-1}$ are
free dg $A$-modules and $P_t$ belongs to ${\mathcal{D}}^{{\leq} {0}}
\cap \, ^{\perp}{\mathcal{D}}^{{\leq} {t-m-1}} \cap \, {\rm per}A,
{\mbox{for each}} 1 {\leq} t {\leq} m$.

The following two steps are quite similar to the proof of Lemma 2.10
in {\cite{Am08}}. However, for the convenience of the reader, we
give a complete proof.

\smallskip

Step 3. {\it $H^0 P_m$ is a projective $H^0 A$-module.}

Since $P_m$ belongs to ${\mathcal{D}}^{\leq 0}$, there exists a
triangle
$${\tau}_{\leq -1} P_m \longrightarrow P_m \longrightarrow H^0 P_m \longrightarrow ({\tau}_{\leq -1} P_m)[1].$$
Take an object $M$ in the heart $\mathcal {H}$, and consider the
long exact sequence $$\ldots {\longrightarrow} (({\tau}_{{\leq} -1}
P_m) [1], M [1]) {\longrightarrow} ( H^0 P_m, M [1])
{\longrightarrow} ( P_m, M [1]) {\longrightarrow} {\ldots} ,$$ where
we write $(\,,)$ for ${\rm Hom}_{\mathcal{D}} (\,,)$. The space
${\rm Hom}_{\mathcal{D}} ( ({\tau}_{{\leq} -1} P_m) [1], M [1])$
vanishes because ${\rm Hom}_{\mathcal{D}} ({\mathcal {D}}^{\leq -2},
{\mathcal {D}}^{\geq -1})$ is zero. Since $P_m$ belongs to
$^{\perp}{\mathcal{D}}^{{\leq} {-1}}$, the space ${\rm
Hom}_{\mathcal{D}} ( P_m, M [1])$ also vanishes. As a result, the
space
$${\rm Ext}^1_{\mathcal {H}} (H^0 P_m M) \simeq {\rm Hom}_{\mathcal{D}} ( H^0 P_m, M
[1])$$ is zero. Thus, $H^0 P_m$ is a projective $H^0 A$-module.

\smallskip

Step 4. {\it $P_m$ is isomorphic to an object in add A. }

From step 3, we deduce that it is possible to find an object $P$ in
$add A$ and a morphism $P \longrightarrow P_m$ such that $H^0 P$ and
$H^0 P_m$ are isomorphic. Then we form a new triangle $$E
\longrightarrow P \longrightarrow P_m \longrightarrow E [1].$$ Since
$P$ and $P_m$ are in ${\mathcal {D}}^{\leq 0}$, the object $E$ is in
${\mathcal {D}}^{\leq 1}$. Moreover, there is a long exact sequence
$$\ldots \longrightarrow H^0 E \longrightarrow H^0 P \simeq H^0 P_m
\longrightarrow H^1 E \longrightarrow H^1 P = 0 .$$ So $E$ is in
${\mathcal {D}}^{\leq 0}$. Since $P_m$ belongs to
$^{\perp}{\mathcal{D}}^{{\leq} {-1}}$, the space ${\rm
Hom}_{\mathcal {D}} (P_m, E[1])$ vanishes. Therefore, the object $P$
is isomorphic to the direct sum of $P_m$ and $E$. Then we have an
isomorphism
$$H^0 P \simeq H^0 P_m \oplus H^0 E.$$ We obtain that $H^0 E$ is
zero. As a consequence, there is no nonzero morphism from $P$ to
$E$, since $P$ is a free $A$-module. Therefore, $E$ is the zero
object and $P_m$ is isomorphic to $P$ which is an object in $add A$.
\end{proof}

Let $N$ be an object of ${\mathcal {F}}$. By lemma $\ref{3}$, there
are $m$-triangles related to the object $N$. Denote by $\nu$ the
Nakayama functor on mod $H^0 A$. Clearly, $\nu H^0 P_m$ and $\nu H^0
Q_{m-1}$ are injective $H^0 A$-modules. Let $M$ be the kernel of the
morphism ${\nu H^0 P_m} \longrightarrow \nu H^0 Q_{m-1}$. It lies in
the heart $\mathcal {H}$. Let $X = N [1]$.

\begin{lem} \label{4}
(1) There are isomorphisms of functors: $${\rm Hom}_{\mathcal {D}}
(-, X[1])|_{\mathcal {H}} \simeq {\rm Hom}_{\mathcal {D}} (-,
P_1[3])|_{\mathcal {H}} \simeq \ldots$$ $$\ldots \simeq {\rm
Hom}_{\mathcal {D}} (-, P_{m-1}[m+1])|_{\mathcal {H}} \simeq {\rm
Hom}_{\mathcal {H}} (-, M).$$

(2) There is a monomorphism of functors: $${\rm Ext}^1_{\mathcal
{H}} (-, M) \hookrightarrow {\rm Hom}_{\mathcal {D}} (-,
P_{m-1}[m+2])|_{\mathcal {H}}.$$
\end{lem}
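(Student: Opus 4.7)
The plan is to apply $\Hom_{\mathcal{D}}(Y,-)$ for $Y \in \mathcal{H}$ to suitably shifted versions of the $m$ triangles produced by Lemma~\ref{3}, chain the resulting long exact sequences, and then reinterpret the final term via Calabi--Yau duality and the Nakayama functor. The basic vanishing tool is: for any $P \in \mathrm{add}\,A$ and $Y \in \mathcal{H}$, the object $P[i]$ belongs to $\mathcal{D}^{\leq -i}$ while $Y \in \mathcal{D}^{\geq 0}$, so the $t$-structure orthogonality forces $\Hom_{\mathcal{D}}(Y, P[i]) = 0$ for every $i \geq 1$.

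For part~(1), I first apply $\Hom(Y,-)$ to the shift by $2$ of the first triangle, namely $P_1[2] \to Q_0[2] \to X[1] \to P_1[3]$; the vanishing of the $Q_0$-terms yields $\Hom(Y, X[1]) \simeq \Hom(Y, P_1[3])$. Iterating with the shifts by $k+2$ of the triangles $P_{k+1} \to Q_k \to P_k \to P_{k+1}[1]$ for $k=1,\ldots,m-2$ chains these into $\Hom(Y, X[1]) \simeq \Hom(Y, P_1[3]) \simeq \cdots \simeq \Hom(Y, P_{m-1}[m+1])$. For the last step, I apply $\Hom(Y,-)$ to the shift by $m+1$ of the final triangle $P_m \to Q_{m-1} \to P_{m-1} \to P_m[1]$; the vanishing $\Hom(Y, Q_{m-1}[m+1]) = 0$ realises $\Hom(Y, P_{m-1}[m+1])$ as the kernel of $\Hom(Y, P_m[m+2]) \to \Hom(Y, Q_{m-1}[m+2])$.

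To finish~(1), I identify $\Hom_{\mathcal{D}}(Y, P[m+2]) \simeq \Hom_{H^0 A}(Y, \nu H^0 P)$ for any $P \in \mathrm{add}\,A$ and $Y \in \mod H^0 A$. This follows from the $(m+2)$-Calabi--Yau duality of Lemma~\ref{17}, giving $D\Hom(Y, P[m+2]) \simeq \Hom(P, Y) \simeq \Hom_{H^0 A}(H^0 P, Y)$, combined with the Nakayama identity $D\Hom_{H^0 A}(H^0 P, -) \simeq \Hom_{H^0 A}(-, \nu H^0 P)$ valid for the finitely generated projective $H^0 A$-module $H^0 P$. Left-exactness of $\Hom_{H^0 A}(Y,-)$ then identifies the kernel above with $\Hom_{H^0 A}(Y, \ker(\nu H^0 P_m \to \nu H^0 Q_{m-1})) = \Hom_{\mathcal{H}}(Y, M)$, completing~(1).

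For part~(2), extending the last long exact sequence one step gives an embedding $\operatorname{coker}(\Hom(Y, P_m[m+2]) \to \Hom(Y, Q_{m-1}[m+2])) \hookrightarrow \Hom(Y, P_{m-1}[m+2])$. Writing $C$ for the image of $\nu H^0 P_m \to \nu H^0 Q_{m-1}$, the short exact sequence $0 \to M \to \nu H^0 P_m \to C \to 0$ together with injectivity of $\nu H^0 P_m$ identifies $\Ext^1_{H^0 A}(Y, M)$ with $\operatorname{coker}(\Hom_{H^0 A}(Y, \nu H^0 P_m) \to \Hom_{H^0 A}(Y, C))$. Since $C \hookrightarrow \nu H^0 Q_{m-1}$ is injective, a short diagram chase shows that the induced comparison map between these two cokernels is itself injective, and composing yields the required monomorphism. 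The delicate points will be the Calabi--Yau plus Nakayama identification (which requires $Y \in \mod H^0 A$ so that $Y \in \mathcal{D}^b$) and the diagram chase comparing the two cokernels at the end of~(2); everything else is bookkeeping with long exact sequences.
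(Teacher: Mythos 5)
Your overall architecture is the same as the paper's (chain the long exact sequences coming from the $m$ triangles of Lemma~\ref{3}, kill the $Q$-terms, and identify the surviving end term via duality plus the Nakayama functor), and your part~(2) is a correct mild variant of the paper's: you factor $\nu H^0 P_m \to \nu H^0 Q_{m-1}$ through its image and compare cokernels, where the paper instead reads off $\Ext^1_{\mathcal H}(-,M)$ from the explicit injective resolution $0\to M\to \nu H^0 P_m\to \nu H^0 Q_{m-1}\to I\to\cdots$. But your ``basic vanishing tool'' is wrong as stated and wrongly justified, and this is the load-bearing step of part~(1). The $t$-structure orthogonality says $\Hom(\mathcal D^{\leq 0},\mathcal D^{\geq 1})=0$, i.e.\ maps \emph{from} the connective part \emph{to} the coconnective part vanish; here $Y\in\mathcal H\subset\mathcal D^{\leq 0}$ maps to $P[i]\in\mathcal D^{\leq -i}$, which is the opposite configuration, so no vanishing follows. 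Worse, the claim ``$\Hom_{\mathcal D}(Y,P[i])=0$ for every $i\geq 1$'' is false: by the Calabi--Yau property $\Hom_{\mathcal D}(Y,A[m+2])\simeq D\Hom_{\mathcal D}(A,Y)\simeq DH^0Y\neq 0$ in general, and indeed your own next step identifies $\Hom_{\mathcal D}(Y,P_m[m+2])$ with $\Hom_{\mathcal H}(Y,\nu H^0P_m)$, which your tool would force to be zero and would collapse the whole lemma.

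The correct statement, and the one the paper uses throughout, is that for $Q$ free and $Y\in\mathcal H$ one has $\Hom_{\mathcal D}(Y,Q[i])\simeq D\Hom_{\mathcal D}(Q,Y[m+2-i])\simeq DH^{m+2-i}(Y)^{\oplus ?}=0$ precisely for $1\leq i\leq m+1$ (the exponent $m+2-i$ is then $\geq 1$ and $Y$ lies in the heart); the vanishing genuinely requires the $(m+2)$-Calabi--Yau duality of Lemma~\ref{17} applied to $Y\in\mathcal D^b(A)$, not the $t$-structure. Fortunately you only ever invoke the tool for shifts $i\leq m+1$ (the terms $\Hom(Y,Q_k[k+2])$, $\Hom(Y,Q_k[k+3])$ with $k\leq m-2$, and $\Hom(Y,Q_{m-1}[m+1])$), so once the justification is replaced by the Calabi--Yau argument and the range of validity is restricted to $1\leq i\leq m+1$, your proof goes through and coincides with the paper's. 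You should also note, as the paper does implicitly, that identifying $\Hom_{\mathcal D}(P,Y)$ with $\Hom_{H^0A}(H^0P,Y)$ uses either freeness of $P$ or the truncation triangle $\tau_{\leq -1}P\to P\to H^0P$, and that the Nakayama identification needs $Y$ finite-dimensional so that $Y\in\mathcal D^b(A)$.
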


\begin{proof}
Let $L$ be in $\mathcal {H}$. Let us prove part (1).

Step 1. {\it There is an isomorphism of functors:$${\rm
Hom}_{\mathcal {D}} (-, P_{m-1}[m+1])|_{\mathcal {H}} \simeq {\rm
Hom}_{\mathcal {H}} (-, M).$$} Applying ${\rm Hom}_{\mathcal {D}}
(L, -)$ to the $m$-th triangle
$$P_{m} \longrightarrow Q_{m-1}\longrightarrow P_{m-1}
\longrightarrow P_m [1], $$ we obtain a long exact sequence $$\ldots
\longrightarrow {\rm Hom}_{\mathcal {D}} (L, Q_{m-1}[m+1])
\longrightarrow {\rm Hom}_{\mathcal {D}} (L, P_{m-1}[m+1])
\longrightarrow $$ $$ \longrightarrow {\rm Hom}_{\mathcal {D}} (L,
P_{m}[m+2]) \longrightarrow {\rm Hom}_{\mathcal {D}} (L,
Q_{m-1}[m+2]) \longrightarrow \ldots .$$ Since $L$ belongs to
${\mathcal {D}}$$^b (A)$, by the Calabi-Yau property one can easily
see the following isomorphism
$${\rm Hom}_{\mathcal {D}} (L, Q_{m-1}[m+1]) \simeq D {\rm
Hom}_{\mathcal {D}} (Q_{m-1}, L[1]).$$ The space vanishes since the
object $Q_{m-1}$ is a free dg $A$-module and $H^1 L$ is zero.
Consider the triangle
$${\tau}_{\leq -1} P_m \longrightarrow P_m \longrightarrow H^0 P_m
\longrightarrow ({\tau}_{\leq -1} P_m)[1].$$ We can get a long exact
sequence
$$\ldots {\longrightarrow} (({\tau}_{{\leq} -1} P_m) [1], L) {\longrightarrow}
( H^0 P_m, L) {\longrightarrow} ( P_m, L) {\longrightarrow}
({\tau}_{{\leq} -1} P_m, L) \longrightarrow {\ldots}\ ,$$ where we
write $(,)$ for ${\rm Hom}_{\mathcal{D}} (,)$. Since ${\rm
Hom}_{\mathcal{D}} ({\mathcal {D}}^{\leq {-1-i}}, {\mathcal
{D}}^{\geq 0})$ is zero, the space ${\rm Hom}_{\mathcal{D}} (
({\tau}_{{\leq} -1} P_m) [i], L)$ vanishes for $i = 0, 1$. Thus, we
have
$${\rm Hom}_{\mathcal{D}} ( P_m, L) \simeq {\rm Hom}_{\mathcal{D}} (
H^0 P_m, L) \simeq {\rm Hom}_{\mathcal{H}} (H^0 P_m, L).$$ Combining
with the Calabi-Yau property, we get the following isomorphisms
$${\rm Hom}_{\mathcal {D}} (L, P_m [m+2]) \simeq D {\rm
Hom}_{\mathcal {D}}(P_m, L)$$ $$ \quad
\quad\quad\quad\quad\quad\quad \simeq D {\rm Hom}_{\mathcal{H}} (H^0
P_m, L) \simeq {\rm Hom}_{\mathcal{H}} (L, \nu H^0 P_m).$$
Similarly, we can see that $${\rm Hom}_{\mathcal{D}} (L, Q_{m-1}
[m+2]) \simeq {\rm Hom}_{\mathcal{H}} (L, \nu H^0 Q_{m-1}).$$
Therefore, the functor ${\rm Hom}_{\mathcal {D}} (-,
P_{m-1}[m+1])|_{\mathcal {H}}$ is isomorphic to the functor ${\rm
Hom}_{\mathcal {H}} (-, M)$, which is the kernel of the morphism
$${\rm Hom}_{\mathcal{H}} (-, \nu H^0 P_m) \longrightarrow {\rm
Hom}_{\mathcal{H}} (-, \nu H^0 Q_{m-1}).$$

\smallskip

Step 2. {\it There are isomorphisms of functors: $${\rm
Hom}_{\mathcal {D}} (-, X[1])|_{\mathcal {H}} \simeq {\rm
Hom}_{\mathcal {D}} (-, P_1[3])|_{\mathcal {H}} \simeq \ldots \simeq
{\rm Hom}_{\mathcal {D}} (-, P_{m-1}[m+1])|_{\mathcal {H}}.$$}
Applying the functor ${\rm Hom}_{\mathcal {D}} (L, -)$ to the
$(m-1)$-th triangle
$$P_{m-1} \longrightarrow Q_{m-2}\longrightarrow P_{m-2}
\stackrel{h_{m-2}}\longrightarrow P_{m-1} [1], $$ we obtain a long
exact sequence
$$\ldots \longrightarrow {\rm Hom}_{\mathcal {D}} (L, Q_{m-2}[m])
\longrightarrow {\rm Hom}_{\mathcal {D}} (L, P_{m-2}[m])
\longrightarrow $$ $$ \longrightarrow {\rm Hom}_{\mathcal {D}} (L,
P_{m-1}[m+1]) \longrightarrow {\rm Hom}_{\mathcal {D}} (L,
Q_{m-2}[m+1]) \longrightarrow \ldots .$$ Since $Q_{m-2}$ is a free
$A$-module and $L$ is in $\mathcal {H}$, the space ${\rm
Hom}_{\mathcal {D}} (Q_{m-2}, L[r])$ vanishes for each positive
integer $r$. As a result, by the Calabi-Yau property, the following
two isomorphisms hold
$${\rm Hom}_{\mathcal {D}} (L, Q_{m-2}[m]) \simeq D {\rm
Hom}_{\mathcal {D}} (Q_{m-2}, L[2]) = 0, $$ $${\rm Hom}_{\mathcal
{D}} (L, Q_{m-2}[m+1]) \simeq D {\rm Hom}_{\mathcal {D}} (Q_{m-2},
L[1]) = 0.$$ Therefore, we have $${\rm Hom}_{\mathcal {D}} (-,
P_{m-2}[m])|_{\mathcal {H}} \simeq {\rm Hom}_{\mathcal {D}} (-,
P_{m-1}[m+1])|_{\mathcal {H}},$$ where the isomorphism is induced by
the left multiplication by $h_{m-2} [m]$.

We inductively work with each triangle and get a corresponding
isomorphism induced by the left multiplication by $h_{m-r} [m-r+2]$,
$${\rm Hom}_{\mathcal {D}} (-, P_{m-r}[m-r+2])|_{\mathcal {H}}
\simeq {\rm Hom}_{\mathcal {D}} (-, P_{m-r+1}[m-r+3])|_{\mathcal
{H}}, \, 2 \leq r \leq {m-1},$$ while the isomorphism $${\rm
Hom}_{\mathcal {D}} (-, X[1])|_{\mathcal {H}} \simeq {\rm
Hom}_{\mathcal {D}} (-, P_1[3])|_{\mathcal {H}}$$ is induced by the
left multiplication by $h_0 [2]$. Therefore, the first assertion in
this lemma holds.

\newpage

Let us prove part (2).

Consider the following long exact sequence
$$\ldots \longrightarrow {\rm Hom}_{\mathcal {D}} (L, P_{m}[m+2])
\longrightarrow {\rm Hom}_{\mathcal {D}} (L, Q_{m-1}[m+2])
\longrightarrow $$
$$\longrightarrow {\rm Hom}_{\mathcal {D}} (L, P_{m-1}[m+2])
\longrightarrow {\rm Hom}_{\mathcal {D}} (L,
P_{m}[m+3])\longrightarrow \ldots .$$ By the Calabi-Yau property,
the space ${\rm Hom}_{\mathcal {D}} (L, P_{m}[m+3])$ is isomorphic
to the zero space $D {\rm Hom}_{\mathcal {D}} (P_{m}[1],L)$.

Hence, the functor ${\rm Hom}_{\mathcal {D}} (-,
P_{m-1}[m+2])|_{\mathcal {H}}$ is isomorphic to the cokernel of the
morphism $${\rm Hom}_{\mathcal{H}} (-, \nu H^0 P_m) \longrightarrow
{\rm Hom}_{\mathcal{H}} (-, \nu H^0 Q_{m-1}).$$ As an $H^0
A$-module, $M$ admits an injective resolution of the following form
$$0 \longrightarrow {\nu H^0 P_m} \longrightarrow {\nu H^0 Q_{m-1}
\longrightarrow I \longrightarrow \ldots ,}$$ where $I$ is an
injective $H^0 A$-module. Then ${\rm Ext}^1_{\mathcal {H}} (-, M)$
is the first homology of the following complex
$$0 \longrightarrow {\rm Hom}_{\mathcal{H}}
(-, \nu H^0 P_m) \longrightarrow {\rm Hom}_{\mathcal{H}} (-, \nu H^0
Q_{m-1}) \longrightarrow {\rm Hom}_{\mathcal {H}} (-, I)
\longrightarrow \ldots .$$ Therefore, we get a monomorphism of
functors $${\rm Ext}^1_{\mathcal {H}} (-, M) \hookrightarrow {\rm
Hom}_{\mathcal {D}} (-, P_{m-1}[m+2])|_{\mathcal {H}}.$$
\end{proof}

Following Step 1 in the proof of lemma \ref{4}, there is an
isomorphism of functors: $${\rm Hom}_{\mathcal {D}} (-,
P_{m-1}[m+1])|_{\mathcal {H}} \simeq {\rm Hom}_{\mathcal {H}} (-,
M).$$ We denote it by ${\varphi}_1$, and when ${\varphi}_1$ is
applied to an object $X$ in $\mathcal {H}$, we denote the
isomorphism by ${\varphi}_{1,X}$. Let ${\rho}$ be the preimage of
the identity map on $M$ under the isomorphism
$${\varphi}_{1,M}: {\rm Hom}_{\mathcal {D}} (M, P_{m-1}[m+1]) \simeq
{\rm Hom}_{\mathcal {H}} (M, M).$$ Now we can form a triangle
$$P_{m-1} [m] \longrightarrow Y' \longrightarrow M
\stackrel{\rho}\longrightarrow P_{m-1} [m+1].$$

\begin{lem} \label{5}
The object $Y'$ is in $\mathcal {F}$.
\end{lem}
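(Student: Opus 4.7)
The two easy inclusions $Y' \in {\rm per} A$ and $Y' \in \mathcal{D}^{\leq 0}$ follow directly from the defining triangle. By Step~2 of the proof of lemma~\ref{3}, $P_{m-1}[m] \in {\rm per} A \cap \mathcal{D}^{\leq -m}$; on the other hand $M \in \mathcal{H}$ is finite-dimensional (being a submodule of the finite-dimensional $H^0A$-module $\nu H^0 P_m$), hence lies in $\mathcal{D}^b(A) \subseteq {\rm per} A$. Since both ${\rm per} A$ and $\mathcal{D}^{\leq 0}$ are closed under extensions, $Y'$ belongs to both.

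The substantive point is $Y' \in {^{\perp}\mathcal{D}^{\leq -m-1}}$. I would first reduce to a statement about $\mathcal{H}$: by lemma~\ref{6} there exists $M_0$ with $Y' \in {^{\perp}\mathcal{D}^{\leq M_0}}$, and for any $Z \in \mathcal{D}^{\leq -m-1}$ the truncation triangle $\tau_{\leq M_0} Z \to Z \to \tau_{> M_0} Z$ gives $\Hom_{\mathcal{D}}(Y', Z) \simeq \Hom_{\mathcal{D}}(Y', \tau_{> M_0} Z)$. Induction on the cohomological length of the bounded complex $\tau_{> M_0} Z$, using the $t$-structure truncation triangles, then reduces the problem to showing $\Hom_{\mathcal{D}}(Y', L[i]) = 0$ for all $L \in \mathcal{H}$ and $i \geq m+1$. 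Applying lemma~\ref{17} with $L \in \mathcal{D}^b(A)$ identifies this space with $D\Hom_{\mathcal{D}}(L, Y'[m+2-i])$, so it suffices to prove $\Hom_{\mathcal{D}}(L, Y'[n]) = 0$ for every $L \in \mathcal{H}$ and every $n \leq 1$.

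I would then apply $\Hom_{\mathcal{D}}(L, -)$ to the defining triangle and argue case by case. For $n \leq -1$ both flanking terms vanish: $\Hom(L, M[n]) = 0$ because $M \in \mathcal{H}$, while $\Hom(L, P_{m-1}[m+n]) \simeq D\Hom(P_{m-1}, L[2-n])$ by lemma~\ref{17} vanishes since $L[2-n] \in \mathcal{D}^{\leq -2}$ and $P_{m-1} \in {^{\perp}\mathcal{D}^{\leq -2}}$ by lemma~\ref{3}. For $n = 0$, naturality of the isomorphism $\varphi_1$ from lemma~\ref{4}(1), together with the defining identity $\varphi_{1,M}(\rho) = \mathrm{id}_M$, gives $\varphi_{1,L} \circ \rho_* = \mathrm{id}$ on $\Hom(L, M)$, so $\rho_*: \Hom(L, M) \to \Hom(L, P_{m-1}[m+1])$ is an isomorphism; combined with $\Hom(L, P_{m-1}[m]) = 0$ (same CY argument), this forces $\Hom(L, Y') = 0$.

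The main obstacle is $n = 1$. Here the long exact sequence identifies $\Hom(L, Y'[1])$ with the kernel of $\rho_*: \Ext^1_{\mathcal{H}}(L, M) \to \Hom(L, P_{m-1}[m+2])$, and the plan is to kill this kernel by a splitting argument. Represent $f \in \Ext^1_{\mathcal{H}}(L, M)$ by a triangle $M \xrightarrow{i} E \to L \xrightarrow{f} M[1]$; the hypothesis $\rho[1] \circ f = 0$, fed into the long exact sequence obtained by applying $\Hom(-, P_{m-1}[m+1])$ to this triangle, produces a lift $\tilde\rho: E \to P_{m-1}[m+1]$ with $\tilde\rho \circ i = \rho$. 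Naturality of $\varphi_1$ along $i$ then yields $\varphi_{1,E}(\tilde\rho) \circ i = \varphi_{1,M}(\rho) = \mathrm{id}_M$, so $\varphi_{1,E}(\tilde\rho)$ retracts $i$, the extension splits, and $f = 0$. This splitting argument, which rests decisively on the definition of $\rho$ as the preimage of $\mathrm{id}_M$ under $\varphi_{1,M}$, is where the construction pays off.
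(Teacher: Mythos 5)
Your proof is correct, and while it rests on the same pillars as the paper's --- the long exact sequence of the defining triangle, truncation, the Calabi--Yau duality of lemma \ref{17}, and the characterization of $\rho$ as the preimage of $\mathrm{id}_M$ under $\varphi_{1,M}$ --- it is organized differently at the decisive step. The paper keeps $Z \in {\mathcal D}^{\leq -m-1}$ fixed, applies $\Hom_{\mathcal D}(-,Z)$ to the triangle, and proves that the connecting map $(P_{m-1}[m],Z)\to(M[-1],Z)$ is injective and that $(P_{m-1}[m+1],Z)\to(M,Z)$ is surjective by a five-lemma chase on the truncations of $Z$ at levels $-m-2$ and $-m-3$; the surjectivity there is extracted from part (2) of lemma \ref{4}, i.e.\ from the monomorphism $\Ext^1_{\mathcal H}(-,M)\hookrightarrow\Hom_{\mathcal D}(-,P_{m-1}[m+2])|_{\mathcal H}$ built out of an injective resolution of $M$. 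You instead dualize once and for all to statements of the form $\Hom_{\mathcal D}(L,Y'[n])=0$ for $L\in{\mathcal H}$ and $n\leq 1$, and you dispose of the critical case $n=1$ by the splitting argument, which amounts to a direct proof that postcomposition with $\rho[1]$ is injective on $\Ext^1_{\mathcal H}(-,M)$. This buys two things: you never need part (2) of lemma \ref{4}, and you make explicit the naturality computations ($\varphi_{1,L}\circ\rho_{\ast}=\mathrm{id}$ and $\varphi_{1,E}(\tilde\rho)\circ i=\mathrm{id}_M$) that the paper leaves implicit when it asserts that its maps $a$ and $c$ are, respectively, an isomorphism and surjective ``by lemma \ref{4}'' --- in particular the paper tacitly identifies the abstract monomorphism of lemma \ref{4}(2) with the map induced by $\rho$, a compatibility your argument supplies for free. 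One caveat you share with the paper: lemma \ref{17} requires one of the two arguments to lie in ${\mathcal D}^b(A)$, whereas the heart objects $L=H^pZ$ arising from an arbitrary $Z\in{\mathcal D}^{\leq -m-1}$ need not be finite-dimensional; both proofs need the standard reduction (every $H^0A$-module is a filtered colimit of its finite-dimensional submodules, and $Y'$, $P_{m-1}$ are compact) to be fully watertight.
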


\begin{proof}
Since $M$ belongs to ${\mathcal {H}}$ and $P_{m-1}[m+1]$ belongs to
${\rm per}A$, it follows that $Y'$ is also in ${\rm per}A$.
Moreover, $Y'$ is in ${\mathcal {D}}^{\leq 0}$, since the objects
$M$ and $P_{m-1}$ are in ${\mathcal {D}}^{\leq 0}$. Let $Z$ be an
object in ${\mathcal{D}}^{{\leq} {-m-1}}$. Then there is a long
exact sequence
$$\ldots \, \longrightarrow {\rm Hom}_{\mathcal{D}} ( P_{m-1} [m+1],
Z) \longrightarrow {\rm Hom}_{\mathcal{D}} ( M, Z) \longrightarrow
$$ $$ \longrightarrow {\rm Hom}_{\mathcal{D}} ( Y', Z)
\longrightarrow {\rm Hom}_{\mathcal{D}} ( P_{m-1} [m], Z)
\longrightarrow {\rm Hom}_{\mathcal{D}} ( M [-1], Z) \longrightarrow
\ldots .$$ Since $Z$ belongs to ${\mathcal{D}}^{{\leq} {-m-1}}$, we
have the following triangle
$${\tau}_{\leq -m-2} Z \longrightarrow Z \longrightarrow
(H^{-m-1} Z) [m+1] \longrightarrow ({\tau}_{\leq -m-2} Z)[1].$$ By
the Calabi-Yau property, the space
$${\rm Hom}_{\mathcal{D}} ( M[-1],
({\tau}_{{\leq} {-m-2}} Z) [i]) \simeq D {\rm Hom}_{\mathcal{D}} (
{\tau}_{{\leq} {-m-2}} Z, M [m+1-i])$$ is zero for $i = 0, 1.$ As a
result, we have that $${\rm Hom}_{\mathcal{D}} ( M [-1], Z) \simeq
{\rm Hom}_{\mathcal{D}} ( M [-1], (H^{-m-1} Z) [m+1])$$
$$\quad\quad\quad \simeq D {\rm Hom}_{\mathcal{D}} ( H^{-m-1} Z,
M).$$

From Step 2 in the proof of lemma \ref{3}, we know that the object
$P_{m-1}$ is in $^{\perp}{\mathcal{D}}^{{\leq} {-2}}$. So the $m$-th
shift $P_{m-1} [m]$ is in $^{\perp}{\mathcal{D}}^{{\leq} {-m-2}}$.
Combining with the Calabi-Yau property, the following isomorphisms
$${\rm Hom}_{\mathcal{D}} ( P_{m-1} [m], Z) \simeq {\rm
Hom}_{\mathcal{D}} ( P_{m-1} [m], (H^{-m-1} Z) [m+1])$$
$$\quad\quad\quad\quad\quad\quad\quad \simeq D {\rm Hom}_{\mathcal{D}} ( H^{-m-1} Z, P_{m-1}
[m+1])$$ hold. Now by lemma {\ref{4}}, we obtain an isomorphism
$${\rm Hom}_{\mathcal{D}} ( P_{m-1} [m], Z) \simeq {\rm
Hom}_{\mathcal{D}} ( M [-1], Z).$$ Consider the following
commutative diagram

\[
\xymatrix{ (P',Z') \ar[r] \ar[d]^a & (P',Z) \ar[r] \ar[d]^b &
(P',(H^{-m-1}Z)[m+1]) \ar[r] \ar[d]^c & (P',Z'[1])
\ar[d]^d \\
(M,Z') \ar[r] & (M,Z) \ar[r]& (M,(H^{-m-1}Z)[m+1]) \ar[r] &
(M,Z'[1]), }
\]
where we write $(,)$ for ${\rm Hom}_{\mathcal {D}} (,)$, $P'$ for
$P_{m-1}[m+1]$, and $Z'$ for $\tau_{\leq {-m-2}}Z$. Since the object
$P_{m-1} [m+1]$ is in $^{\perp}{\mathcal{D}}^{{\leq} {-m-3}}$, we
have that the space ${\rm Hom}_{\mathcal{D}} ( P_{m-1} [m+1],
({\tau}_{{\leq} {-m-2}} Z) [1])$ vanishes, and then the rightmost
morphism $d$ is a zero map. By the Calabi-Yau property and
proposition \ref{2}, on can easily get the following isomorphisms
$${\rm Hom}_{\mathcal{D}} ( P_{m-1} [m+1], (H^{-m-1} Z) [m+1])
\simeq D {\rm Hom}_{\mathcal{D}} ( H^{-m-1} Z, P_{m-1} [m+2]),$$
\begin{flushleft}
${\rm Hom}_{\mathcal{D}} ( M, (H^{-m-1} Z) [m+1]) \simeq D {\rm
Hom}_{\mathcal{D}} ( H^{-m-1} Z, M[1])$
\end{flushleft}
$\quad\quad\quad\quad\quad\quad\quad\quad\quad\quad\quad\quad\quad
\simeq D {\rm Ext}^1_{\mathcal {H}} (H^{-m-1} Z, M).$

\noindent Then by lemma \ref{4}, the morphism $c$ is surjective.
Consider the triangle
$${\tau}_{\leq -m-3} Z \longrightarrow {\tau}_{\leq -m-2} Z \longrightarrow
(H^{-m-2} Z) [m+2] \longrightarrow ({\tau}_{\leq -m-3} Z)[1].$$
Applying the functor ${\rm Hom}_{\mathcal {D}} ( - , M[m+2])$ to
this triangle and by the Calabi-Yau property, we can obtain
isomorphisms as follows:
$${\rm Hom}_{\mathcal {D}} (M , {\tau}_{\leq -m-2} Z ) \simeq D
{\rm Hom}_{\mathcal {D}} ({\tau}_{\leq -m-2} Z , M[m+2])$$ $$ \simeq
D {\rm Hom}_{\mathcal {D}} ((H^{-m-2} Z) [m+2], M[m+2]) \simeq D
{\rm Hom}_{\mathcal {D}} (H^{-m-2} Z, M).$$ Applying the functor
${\rm Hom}_{\mathcal {D}} (P_{m-1} [m+1], -)$ to the same triangle
and by the Calabi-Yau property, we can get isomorphisms as follows:
$${\rm Hom}_{\mathcal {D}} (P_{m-1} [m+1], {\tau}_{\leq -m-2} Z) \simeq
{\rm Hom}_{\mathcal {D}} ( P_{m-1} [m+1], (H^{-m-2} Z) [m+2])$$
$$\simeq D{\rm Hom}_{\mathcal {D}} (H^{-m-2} Z, P_{m-1} [m+1]).$$
Therefore, following lemma \ref{4}, the leftmost morphism $a$ is an
isomorphism. Then by Five-Lemma, the morphism $b$ is surjective.
From the long exact sequence at the beginning of the proof, we can
see that the space ${\rm Hom}_{\mathcal {D}} (Y' , Z)$ vanishes for
any $Z \in {\mathcal{D}}^{{\leq} {-m-1}}$. Hence, the object $Y'$ is
in $\mathcal {F}$.
\end{proof}

Let $\varphi_r (2 \leq r \leq {m-1})$ denote the isomorphism
$${\rm Hom}_{\mathcal {D}} (-, P_{m-r} [m-r+2])|_{\mathcal {H}}
\simeq {\rm Hom}_{\mathcal {D}} (-, P_{m-r+1}[m-r+3])|_{\mathcal
{H}}$$  in lemma \ref{3}, and let $\varphi_m$ denote the isomorphism
$${\rm Hom}_{\mathcal {D}} (-, X[1])|_{\mathcal {H}} \simeq {\rm
Hom}_{\mathcal {D}} (-, P_1[3])|_{\mathcal {H}}.$$ We write $f$ for
the composition $h_{m-2}[m] \ldots h_0 [2]$, and $\theta$ for the
composition $\varphi_1 \ldots \varphi_m$. Let $\varepsilon$ be the
preimage of the identity map on $M$ under the isomorphism
$$\theta_M : {\rm Hom}_{\mathcal {D}} (M, X[1]) \simeq {\rm
Hom}_{\mathcal {H}} (M, M).$$ As a result, we have that $$\theta_M
(\varepsilon) = id_M = \varphi_{1 M}({\rho}).$$ Thus, the following
equalities hold, $$f \varepsilon = \varphi_{2,M} \ldots
\varphi_{m,M} (\varepsilon) = \rho .$$ Now we form a new triangle
$$X \longrightarrow Y \longrightarrow M \stackrel{\varepsilon}\longrightarrow X[1].$$
\begin{lem} \label{7}
The object $Y$ is in $\mathcal {F}$ and $\tau_{\leq {-1}} Y$ is
isomorphic to $X$.
\end{lem}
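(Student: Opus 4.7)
The plan is to verify each of the three conditions defining $\mathcal{F}$ and then deduce the isomorphism $\tau_{\leq -1}Y \simeq X$ from a cohomology computation, in the spirit of Lemma~\ref{5}. First, $X = N[1]$ lies in $\mathrm{per}A$ since $N \in \mathcal{F} \subseteq \mathrm{per}A$, and $M \in \mathcal{H}$ is a finite-dimensional $H^0A$-module, hence $M \in \mathcal{D}^b(A) \subseteq \mathrm{per}A$. The triangle $X \to Y \to M \to X[1]$ then forces $Y \in \mathrm{per}A$. The bound $Y \in \mathcal{D}^{\leq 0}$ is similarly immediate since $X \in \mathcal{D}^{\leq -1}$ and $M \in \mathcal{D}^{\leq 0}$.

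The main step is to prove $Y \in {}^{\perp}\mathcal{D}^{\leq -m-1}$. Given $Z \in \mathcal{D}^{\leq -m-1}$, apply $\mathrm{Hom}_{\mathcal{D}}(-, Z)$ to the triangle to obtain
$$\mathrm{Hom}(X[1], Z) \xrightarrow{\varepsilon^*} \mathrm{Hom}(M, Z) \to \mathrm{Hom}(Y, Z) \to \mathrm{Hom}(X, Z) \xrightarrow{(\varepsilon[-1])^*} \mathrm{Hom}(M[-1], Z).$$
It suffices to check that $\varepsilon^*$ is surjective and $(\varepsilon[-1])^*$ is injective. Following the template of Lemma~\ref{5}, I would split $Z$ using the truncation triangle $\tau_{\leq -m-2} Z \to Z \to H^{-m-1}(Z)[m+1] \to$, set up the commutative diagram whose rows are the corresponding long exact sequences (for $\mathrm{Hom}(X[1], -)$ and $\mathrm{Hom}(M, -)$, and similarly for the second pair) and whose columns are induced by $\varepsilon$, and apply the Five-Lemma. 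On the top part $H^{-m-1}(Z)[m+1]$, the Calabi-Yau property (applicable since $M \in \mathcal{D}^b(A)$) together with Lemma~\ref{4}(1) and the defining identity $\theta_M(\varepsilon) = \mathrm{id}_M$ identifies the column with (the dual of) the identity on $\mathrm{Hom}_{\mathcal{H}}(H^{-m-1}(Z), M)$, hence an isomorphism. On the tail $\tau_{\leq -m-2} Z$, a further truncation combined with Calabi-Yau duality and the orthogonality $X[1] \in {}^{\perp}\mathcal{D}^{\leq -m-3}$ (inherited from $N \in {}^{\perp}\mathcal{D}^{\leq -m-1}$) produces an isomorphism too. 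The Five-Lemma then delivers $\mathrm{Hom}(Y, Z) = 0$.

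For the truncation statement, take cohomology of the triangle. Since $M$ is concentrated in degree $0$ and $H^0 X = 0$ (as $X \in \mathcal{D}^{\leq -1}$), the long exact sequence yields $H^i Y \simeq H^i X$ for $i \leq -1$ and $H^0 Y \simeq M$. Since $X \in \mathcal{D}^{\leq -1}$, the composition $X \to Y \to H^0 Y$ vanishes, so $X \to Y$ factors uniquely through $\tau_{\leq -1} Y \to Y$, and the induced morphism $X \to \tau_{\leq -1} Y$ is a quasi-isomorphism, hence an isomorphism in $\mathcal{D}$.

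The main obstacle is the orthogonality step. One must set up the Five-Lemma diagram carefully and correctly propagate the identification $\theta_M(\varepsilon) = \mathrm{id}_M$ through the chain $\theta = \varphi_1 \cdots \varphi_m$ of Lemma~\ref{4} and through the Calabi-Yau duality. This closely parallels the corresponding argument in Lemma~\ref{5}, with $P_{m-1}[m+1]$ replaced by $X[1] = N[2]$ throughout; the point is that $\varepsilon$ was constructed precisely so as to play the same role that $\rho$ played for $Y'$ there, which is what allows the Five-Lemma input to match.
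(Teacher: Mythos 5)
Your overall strategy --- checking the three defining conditions of $\mathcal{F}$ directly and reading off $\tau_{\leq -1}Y\simeq X$ from the cohomology sequence --- is sound for the easy parts ($Y\in\mathrm{per}A\cap\mathcal{D}^{\leq 0}$ and the truncation statement, which you prove essentially as the paper does). But for the key step, $Y\in{}^{\perp}\mathcal{D}^{\leq -m-1}$, you diverge from the paper, and your version has a genuine gap. You propose to re-run the internal argument of Lemma \ref{5} verbatim with $X[1]$ in place of $P_{m-1}[m+1]$ and $\varepsilon$ in place of $\rho$. The problem sits in the column over $(H^{-m-1}Z)[m+1]$: after Calabi--Yau duality, that column is the dual of the map $\mathrm{Ext}^1_{\mathcal H}(H^{-m-1}Z,M)\to\mathrm{Hom}_{\mathcal D}(H^{-m-1}Z,X[2])$ induced by $\varepsilon[1]$ --- a degree-$2$ statement --- whereas Lemma \ref{4}(1) only identifies $\mathrm{Hom}_{\mathcal D}(-,X[1])|_{\mathcal H}$ with $\mathrm{Hom}_{\mathcal H}(-,M)$. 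Your claim that this column is ``the dual of the identity on $\mathrm{Hom}_{\mathcal H}(H^{-m-1}Z,M)$, hence an isomorphism'' is therefore a shift error: the relevant functor is $\mathrm{Hom}_{\mathcal D}(-,X[2])|_{\mathcal H}$, for which neither part of Lemma \ref{4} gives an isomorphism, and the chain of isomorphisms of Lemma \ref{4}, Step 2, does not extend one degree higher because $\mathrm{Hom}_{\mathcal D}(Q,L)$ is nonzero for $Q$ free and $L\in\mathcal H$. What one actually needs there is only injectivity of $\mathrm{Ext}^1_{\mathcal H}(-,M)\to\mathrm{Hom}_{\mathcal D}(-,X[2])|_{\mathcal H}$, and the only available route to it is to compose with $f[1]\colon X[2]\to P_{m-1}[m+2]$ and observe that the composite is the monomorphism of Lemma \ref{4}(2) (since $f\varepsilon=\rho$). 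In other words, any repair of your argument is forced back through $P_{m-1}[m+2]$.

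This is exactly why the paper takes a different route: from the identity $\rho=f\varepsilon$ it builds a morphism of triangles $g\colon Y\to Y'$ over the identity of $M$, shows that $\mathrm{Hom}_{\mathcal D}(f[-1],-)|_Z$ is an isomorphism (using only that the $Q_i$ are free), deduces from the four-lemma that $\mathrm{Hom}_{\mathcal D}(g,-)|_Z$ is surjective, and concludes $\mathrm{Hom}_{\mathcal D}(Y,Z)=0$ from the already-established vanishing $\mathrm{Hom}_{\mathcal D}(Y',Z)=0$ of Lemma \ref{5}. That detour is not cosmetic --- it is what supplies the degree-$2$ input that your direct approach is missing. I would either adopt the comparison with $Y'$ outright, or, if you keep your diagram, explicitly prove the monomorphism $\mathrm{Ext}^1_{\mathcal H}(-,M)\hookrightarrow\mathrm{Hom}_{\mathcal D}(-,X[2])|_{\mathcal H}$ by factoring through Lemma \ref{4}(2) as above; as written, the step fails.
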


\begin{proof}
Since $M$ belongs to ${\mathcal {H}}$ and $X$ belongs to ${\mathcal
{D}}^{\leq 0} \cap {\rm per}A$, the object $Y$ is also in ${\mathcal
{D}}^{\leq 0} \cap {\rm per}A$. Our aim is to show that $Y$ is in
$^{\perp}{\mathcal{D}}^{{\leq} {-m-1}}$. Let $Z$ be an object in
${\mathcal{D}}^{{\leq} {-m-1}}$. There is a long exact sequence
$$\ldots \, \longrightarrow {\rm Hom}_{\mathcal{D}} (
X[1], Z) \longrightarrow {\rm Hom}_{\mathcal{D}} ( M, Z)
\longrightarrow $$ $$ \longrightarrow {\rm Hom}_{\mathcal{D}} ( Y,
Z) \longrightarrow {\rm Hom}_{\mathcal{D}} ( X, Z) \longrightarrow
{\rm Hom}_{\mathcal{D}} ( M [-1], Z) \longrightarrow \ldots .$$
Since $X$ is in $^{\perp}{\mathcal{D}}^{{\leq} {-m-2}}$, by the same
argument as in lemma \ref{5}, we can obtain an isomorphism
$${\rm Hom}_{\mathcal{D}} ( X, Z) \simeq {\rm Hom}_{\mathcal{D}} ( M
[-1], Z) .$$ Since $\rho$ is the composition $f \varepsilon$, there
exists a morphism $g : Y \longrightarrow Y'$ such that the following
diagram is commutative
\[
\xymatrix{ X \ar[r] \ar[d]^{f[-1]}& Y \ar[r] \ar@{.>}[d]^g &M \ar[r]^{\varepsilon} \ar@{=}[d] &X[1] \ar[d]^f \\
P_{m-1}[m] \ar[r] & Y' \ar[r] & M \ar[r]^-{\rho} &
\,P_{m-1}[m+1].}\]

Applying the functor ${\rm Hom}_{\mathcal {D}} (- , Z)$ to this
diagram, then we get the following commutative diagram
\[
\xymatrix{ (M,Z) \ar[r] \ar@{=}[d] & (Y',Z) \ar[r] \ar[d]^-{{\rm
Hom}_{\mathcal {D}} (g,-)|_Z} &
(P_{m-1}[m],Z) \ar[r] \ar[d]^-{{\rm Hom}_{\mathcal {D}} (f[-1],-)|_Z} & (M[-1], Z) \ar@{=}[d]\\
(M,Z) \ar[r] & (Y,Z) \ar[r] & (X,Z) \ar[r] & (M[-1],Z), }
\]
where we write $(,)$ for ${\rm Hom}_{\mathcal {D}} (,)$. The
morphism $${\rm Hom}_{\mathcal {D}} (f [-1],-)|_Z : {\rm
Hom}_{\mathcal {D}} (P_{m-1}[m], Z) \longrightarrow {\rm
Hom}_{\mathcal{D}} ( X, Z)$$ is an isomorphism. We can see this as
follows:

applying the functor ${\rm Hom}_{\mathcal {D}} ( -, Z)$ to triangles
$$P_1[1] \longrightarrow Q_0[1] \longrightarrow X
\stackrel{h_0[1]}\longrightarrow P_1[2], \quad {\mbox{and}}$$
$$P_r[r] \longrightarrow Q_{r-1}[r] \longrightarrow P_{r-1}[r]
\stackrel{h_{r-1}[r]}\longrightarrow P_r[r+1], \quad 2 \leq r \leq
{m-1},$$ we can get long exact sequences (here we denote $X$ by
$P_0[1]$)
$$\ldots \longrightarrow {\rm Hom}_{\mathcal {D}} ( Q_{r-1}[r+1], Z)
\longrightarrow {\rm Hom}_{\mathcal {D}} ( P_r[r+1], Z)
\longrightarrow $$ $$ \longrightarrow {\rm Hom}_{\mathcal {D}} (
P_{r-1}[r], Z) \longrightarrow {\rm Hom}_{\mathcal {D}} (
Q_{r-1}[r], Z) \longrightarrow \ldots , \quad  1 \leq r \leq
{m-1}.$$ The objects $Z [-r-i] (i = 0, 1)$ are in ${\mathcal
{D}}^{\leq {r+i-m-1}} (\subset {\mathcal {D}}^{\leq {-1}})$. Since
$Q_{r-1}$ is a free $A$-module, the space ${\rm Hom}_{\mathcal {D}}
( Q_{r-1}[r+i], Z)$ vanishes for $i=0, 1$. Thus, the morphism $${\rm
Hom}_{\mathcal {D}} (h_{r-1}[r],-)|_Z: {\rm Hom}_{\mathcal {D}} (
P_{r}[r+1], Z) \simeq {\rm Hom}_{\mathcal {D}} ( P_{r-1}[r], Z)$$ is
an isomorphism for each $1 \leq r \leq {m-1}.$ As a consequence, the
functor ${\rm Hom}_{\mathcal {D}} (f [-1],-)|_Z$ is an isomorphism.
By Five-Lemma, we can obtain that ${\rm Hom}_{\mathcal {D}}
(g,-)|_Z$ is an epimorphism. From lemma \ref{5}, we know that the
object $Y'$ is in $\mathcal {F}$, and the space ${\rm Hom}_{\mathcal
{D}} (Y', Z)$ vanishes. It follows that the space ${\rm
Hom}_{\mathcal {D}} (Y, Z)$ is also zero, hence $Y$ is in $\mathcal
{F}$.

Since $X$ is in ${\mathcal {F}} [1]$, the spaces $H^0 X$ and $H^1 X$
are zero. Thus, the object $H^0 Y$ is isomorphic to $M$. Moreover,
the space ${\rm Hom}_{\mathcal {D}} (X, H^0 Y)$ is zero. Hence, we
can obtain a commutative diagram of triangles
\[
\xymatrix{ \tau_{\leq {-1}} Y \ar[r] & Y \ar[r]^{p_Y} \ar@{=}[d] &
H^0 Y \ar[r] &
(\tau_{\leq {-1}} Y)[1] \\
X \ar[r] \ar@{.>}[u]_{\delta_2} & Y \ar[r] & M \ar[r]
\ar@{.>}[u]_{\delta_1} & X[1] , \ar@{.>}[u]}
\]
where ${\delta_1}: M \longrightarrow H^0 Y$ is an epimorphism
between isomorphic terms. Therefore, $\delta_1$ is an isomorphism.
Thus, $\tau_{\leq {-1}} Y$ is isomorphic to $X$.

\end{proof}

\begin{lem} \label{8}
The image of the functor ${\tau}_{\leq {-i}}$ restricted to
$\mathcal {F}$ is in ${\mathcal {F}}[i]$ and the functor
${\tau}_{\leq {-i}}: {\mathcal {F}} \longrightarrow {\mathcal {F}}
[i]$ is fully faithful for each positive integer $i$.
\end{lem}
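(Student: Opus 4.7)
The plan is to prove both claims directly, using the standard triangle
$$\tau_{\leq -i} N \longrightarrow N \longrightarrow \tau_{\geq -i+1} N \longrightarrow (\tau_{\leq -i} N)[1]$$
for $N \in \mathcal{F}$, and the Calabi--Yau duality on $\mathcal{D}^b(A)$ afforded by Lemma \ref{17}. Throughout, I will abbreviate $(X,Y) = {\rm Hom}_{\mathcal{D}}(X,Y)$.

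\smallskip

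\textbf{Step 1 (the image lies in $\mathcal{F}[i]$).} Given $N \in \mathcal{F}$, I need to check that $\tau_{\leq -i} N [-i]$ satisfies the three defining conditions of $\mathcal{F}$. Belonging to $\mathcal{D}^{\leq 0}$ is automatic, and belonging to ${\rm per}A$ follows from the fact that the $t$-structure restricts to ${\rm per}A$ (Lemma \ref{6}). The remaining requirement is $\tau_{\leq -i} N \in {}^{\perp}\mathcal{D}^{\leq -m-1-i}$. For any $Z \in \mathcal{D}^{\leq -m-1-i}$, I apply $(-,Z)$ to the triangle above and obtain the outer terms $(\tau_{\geq -i+1} N, Z)$ and $(\tau_{\geq -i+1} N [-1], Z)$, both of which vanish because ${\rm Hom}(\mathcal{D}^{\geq -i+1}, \mathcal{D}^{\leq -m-2-i}) = 0$; the middle term $(N,Z)$ vanishes because $N \in {}^\perp \mathcal{D}^{\leq -m-1}$ and $Z \in \mathcal{D}^{\leq -m-1-i} \subseteq \mathcal{D}^{\leq -m-1}$.

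\smallskip

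\textbf{Step 2 (fully faithfulness).} Let $N_1, N_2 \in \mathcal{F}$. I will show that the natural map
$$\tau_{\leq -i}: (N_1, N_2) \longrightarrow (\tau_{\leq -i} N_1, \tau_{\leq -i} N_2)$$
is bijective by factoring it as a composition of two isomorphisms through $(\tau_{\leq -i} N_1, N_2)$.

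For the first isomorphism, apply $(\tau_{\leq -i} N_1, -)$ to the triangle
$\tau_{\leq -i} N_2 \to N_2 \to \tau_{\geq -i+1} N_2 \to (\tau_{\leq -i} N_2)[1]$. Because $\tau_{\leq -i} N_1 \in \mathcal{D}^{\leq -i}$ while $\tau_{\geq -i+1} N_2$ and its shift by $-1$ both lie in $\mathcal{D}^{\geq -i+1}$, the vanishing of ${\rm Hom}(\mathcal{D}^{\leq -i}, \mathcal{D}^{\geq -i+1})$ yields
$(\tau_{\leq -i} N_1, \tau_{\leq -i} N_2) \simeq (\tau_{\leq -i} N_1, N_2)$.

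For the second, apply $(-, N_2)$ to the triangle for $N_1$. Here the outer terms are $(\tau_{\geq -i+1} N_1, N_2)$ and $(\tau_{\geq -i+1} N_1[-1], N_2)$. Since $N_1 \in {\rm per}A \cap \mathcal{D}^{\leq 0}$ and per$A$ is Hom-finite (Proposition \ref{10}), the truncation $\tau_{\geq -i+1} N_1$ belongs to $\mathcal{D}^b(A)$; the Calabi--Yau duality of Lemma \ref{17} then gives
$$(\tau_{\geq -i+1} N_1, N_2) \simeq D\, (N_2, \tau_{\geq -i+1} N_1 [m+2]), \quad (\tau_{\geq -i+1} N_1 [-1], N_2) \simeq D\, (N_2, \tau_{\geq -i+1} N_1 [m+1]).$$
Both right-hand sides vanish because $\tau_{\geq -i+1} N_1$ lies in $\mathcal{D}^{\leq 0}$, so its shifts by $m+2$ and $m+1$ lie in $\mathcal{D}^{\leq -m-1}$, and $N_2 \in {}^\perp \mathcal{D}^{\leq -m-1}$. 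Hence $(\tau_{\leq -i} N_1, N_2) \simeq (N_1, N_2)$.

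\smallskip

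\textbf{Main obstacle.} The technically delicate point is the second isomorphism in Step 2, where the Calabi--Yau property of $\mathcal{D}^b(A)$ has to be brought to bear on a truncation of $N_1$. The argument hinges on being able to claim $\tau_{\geq -i+1} N_1 \in \mathcal{D}^b(A)$; this combines the $t$-structure restriction from Lemma \ref{6} with the Hom-finiteness of per$A$ from Proposition \ref{10} (so that the finitely many nonzero cohomology groups are finite-dimensional). Once this is granted, the compatibility of the two isomorphisms with the natural map induced by $\tau_{\leq -i}$ is functorial, so the composition is precisely the functor's action on morphisms, completing the proof of full faithfulness.
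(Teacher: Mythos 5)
Your Step 2 is essentially correct and, modulo reorganization, is the paper's argument: the paper establishes fullness and faithfulness by lifting morphisms along the truncation triangles, but the key vanishings it uses are exactly your two, namely ${\rm Hom}_{\mathcal D}(\mathcal D^{\leq -i},\mathcal D^{\geq -i+1})=0$ for one side and the Calabi--Yau duality applied to $(\tau_{>-i}N_1)[-j]$, $j=0,1$, for the other (together with the observation that $\tau_{>-i}N_1\in\mathcal D^b(A)$).

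Step 1, however, contains a genuine gap. You dispose of the outer terms $(\tau_{\geq -i+1}N,Z)$ and $(\tau_{\geq -i+1}N[-1],Z)$ by invoking ${\rm Hom}(\mathcal D^{\geq -i+1},\mathcal D^{\leq -m-2-i})=0$, but this vanishing goes the wrong way: the $t$-structure axiom gives ${\rm Hom}(\mathcal D^{\leq a},\mathcal D^{\geq b})=0$ for $a<b$, i.e.\ maps from the aisle into the co-aisle vanish, whereas maps from $\mathcal D^{\geq a}$ into $\mathcal D^{\leq b}$ with $b<a$ are (shifted) \emph{positive} Ext groups and do not vanish in general --- for instance ${\rm Hom}_{\mathcal D}(S,S[j])\simeq{\rm Ext}^j_{H^0A}(S,S)$ can be nonzero for arbitrarily large $j$ when $H^0A$ has infinite global dimension, and $S[j]$ lies in $\mathcal D^{\leq -j}$. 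The correct way to kill the relevant term $((\tau_{>-i}N)[-1],Z)$ is the one you yourself use in Step 2: note that $\tau_{>-i}N$ lies in $\mathcal D^b(A)$ (finitely many homologies, each finite-dimensional by the Hom-finiteness of ${\rm per}A$), apply the $(m+2)$-Calabi--Yau duality to get $D\,{\rm Hom}_{\mathcal D}(Z,(\tau_{>-i}N)[m+1])$, and observe that this vanishes because $Z\in\mathcal D^{\leq -m-1-i}$ while $(\tau_{>-i}N)[m+1]\in\mathcal D^{\geq -m-i}$, which is now a Hom in the direction that the $t$-structure axiom does control. This is precisely how the paper argues, so the statement is true, but your justification as written would fail.
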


\begin{proof}
Let $X$ be an object in $\mathcal {F}$. Then ${\tau}_{\leq {-i}} X$
is in ${\mathcal {D}}^{\leq {-i}}$, and there is a triangle in
$\mathcal {D}$
$${\tau}_{\leq {-i}} X \longrightarrow X \longrightarrow
{\tau}_{> {-i}} X \longrightarrow ({\tau}_{\leq {-i}} X) [1].$$
Following lemma \ref{6}, the object ${\tau}_{\leq {-i}} X$ belongs
to ${\mathcal {D}}^{\leq {-i}} \cap {\rm per}A$. Let $Y$ be an
object in ${\mathcal {D}}^{\leq {-m-i-1}}$. Applying the functor
${\rm Hom}_{\mathcal {D}} (-, Y)$ to this triangle, then we can get
a long exact sequence
$$\ldots \rightarrow {\rm Hom}_{\mathcal {D}} (X, Y)
\rightarrow {\rm Hom}_{\mathcal {D}} ({\tau}_{\leq {-i}} X, Y)
\rightarrow {\rm Hom}_{\mathcal {D}} (({\tau}_{> {-i}} X)[-1], Y)
\rightarrow \ldots .$$ The space ${\rm Hom}_{\mathcal {D}} (X, Y)$
vanishes because $X$ is in $^{\perp}{\mathcal{D}}^{{\leq} {-m-1}}$
and $i$ is a positive integer. Since ${\tau}_{> {-i}} X$ is in
${\mathcal {D}}$$^b (A)$, by the Calabi-Yau property, we have the
isomorphism
$${\rm Hom}_{\mathcal {D}} (({\tau}_{> {-i}} X)[-1], Y) \simeq D
{\rm Hom}_{\mathcal {D}} ( Y, ({\tau}_{> {-i}} X)[m+1]) = 0 .$$
Hence the space ${\rm Hom}_{\mathcal {D}} ({\tau}_{\leq {-i}} X, Y)$
is also zero. It follows that ${\tau}_{\leq {-i}} X$ belongs to
${\mathcal {F}}[i]$.

Let $X, Y$ be two objects in $\mathcal {F}$ and $f: {\tau}_{\leq
{-i}} X \longrightarrow {\tau}_{\leq {-i}} Y$ a morphism. Consider
the following diagram
\[
\xymatrix{(\tau_{>{-i}}X)[-1] \ar[r] & \tau_{\leq {-i}}X
\ar[r]^{s_X^i} \ar[d]^f & X \ar[r] \ar@{.>}[d] & \tau_{> {-i}}X \\
(\tau_{>{-i}}Y)[-1] \ar[r] & \tau_{\leq {-i}}Y \ar[r]^{s_Y^i} & Y
\ar[r] & \tau_{> {-i}}Y .}
\]

For $j = 0, 1$, by the Calabi-Yau property, the isomorphism holds
$${\rm Hom}_{\mathcal {D}} (({\tau}_{> {-i}} X)[-j], Y) \simeq D {\rm Hom}_{\mathcal {D}} (Y, ({\tau}_{> {-i}}
X)[m+2-j]) = 0 ,$$ since $Y$ is an object in
$^{\perp}{\mathcal{D}}^{{\leq} {-m-1}}$.

Since the space ${\rm Hom}_{\mathcal {D}} (({\tau}_{> {-i}} X)[-1],
Y)$ vanishes, the composition $s_Y^i f$ factors through $s_X^i$.
Thus, the functor ${\tau}_{\leq {-i}}$ is full.

Let $g: X \longrightarrow Y$ be a morphism in $\mathcal {F}$
satisfying $\tau_{\leq {-i}} g$ is zero. Then it induces a
commutative diagram as follows
\[
\xymatrix{\tau_{\leq {-i}}X \ar[r]^{s_X^i} \ar[d]|{\tau_{\leq
{-i}}g} & X \ar[r]^{p_X^i} \ar[d]^g &
\tau_{>{-i}}X \ar[r] \ar@{.>}[ld]^{g_1} & (\tau_{\leq {-i}}X)[1] \\
\tau_{\leq {-i}}Y \ar[r]^{s_Y^i} & Y \ar[r]^{p_Y^i} & \tau_{>
 {-i}}Y \ar[r] & (\tau_{\leq {-i}}X)[1] .
}\]

\noindent The morphism $g s_X^i$ is zero, so the morphism $g$ factor
through $p_X^i$. That is, there exists a morphism $g_1 : {\tau}_{>
{-i}} X \longrightarrow Y$ such that $g = g_1 p_X^i$. The morphism
$g_1$ is zero, since the space ${\rm Hom}_{\mathcal {D}} ({\tau}_{>
{-i}} X, Y)$. Thus, the morphism $g$ is zero. It follows that the
functor ${\tau}_{\leq {-i}}$ is faithful. Now this lemma holds.
\end{proof}

Together by lemma \ref{7} and lemma \ref{8}, we know that the
functor ${\tau}_{\leq {-1}} : {\mathcal {F}} \longrightarrow
{\mathcal {F}}[1]$ is an equivalence.

By the same arguments as Step 1 and Step 2 in the proof of
proposition 2.9 in \cite{Am08}, we can get the following two lemmas.
However, for the convenience of our later proposition \ref{12}, we
would like to write down the proof of the second lemma, which
presents a procedure of constructing the needed object.

\begin{lem}
The functor $\pi$ (restricted to $\mathcal {F}$)$: \mathcal {F}
\longrightarrow \mathcal {C}$ is fully faithful.
\end{lem}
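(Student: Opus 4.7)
The plan is to adapt the Verdier-localization argument from the proof of [Am08, Prop.~2.9] to the higher Calabi-Yau setting. For $X, Y \in {\mathcal F}$, I would first establish the colimit description
$${\rm Hom}_{\mathcal{C}}(\pi X, \pi Y) \;=\; \varinjlim_i {\rm Hom}_{{\rm per}A}(\tau_{\leq -i} X, Y),$$
where the transition maps are precomposition along the canonical morphisms $\tau_{\leq -j} X \to \tau_{\leq -i} X$ for $j \geq i$. The triangles $\tau_{\leq -i} X \stackrel{s_X^i}{\to} X \stackrel{p_X^i}{\to} \tau_{>-i} X \to (\tau_{\leq -i} X)[1]$ have third vertex in ${\mathcal D}^b(A)$ (by Lemmas \ref{6} and \ref{13}, since $\tau_{>-i} X$ is homologically bounded with finite-dimensional homology in each degree), so each $s_X^i$ becomes invertible in $\mathcal C$. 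Cofinality of $\{s_X^i\}_i$ among all $s: X' \to X$ with cone $N \in {\mathcal D}^b(A)$ follows by choosing $k$ with $N \in {\mathcal D}^{\geq -k}$ and noting that ${\rm Hom}_{\mathcal D}(\tau_{\leq -k-1} X, N) = 0$ by $t$-structure orthogonality; this forces $s_X^{k+1}$ to factor through $s$.

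For \textbf{faithfulness}, suppose $f: X \to Y$ in ${\rm per}A$ satisfies $\pi(f) = 0$. Then by the colimit description $f \circ s_X^i = 0$ for some $i$, so $f$ factors as $f = g \circ p_X^i$ for some $g: \tau_{>-i} X \to Y$. Since $X \in {\mathcal D}^{\leq 0}$, the object $\tau_{>-i} X$ lies in ${\mathcal D}^{\geq -i+1} \cap {\mathcal D}^{\leq 0}$, whence $(\tau_{>-i} X)[m+2] \in {\mathcal D}^{\leq -m-2} \subset {\mathcal D}^{\leq -m-1}$. The defining orthogonality $Y \in {}^{\perp}{\mathcal D}^{\leq -m-1}$ yields ${\rm Hom}_{\mathcal D}(Y, (\tau_{>-i} X)[m+2]) = 0$, and Lemma \ref{17} applied with $M = \tau_{>-i} X \in {\mathcal D}^b(A)$ and $L = Y$ converts this into ${\rm Hom}_{\mathcal D}(\tau_{>-i} X, Y) = 0$. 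Hence $g = 0$ and $f = 0$.

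For \textbf{fullness}, any class $\varphi \in {\rm Hom}_{\mathcal C}(\pi X, \pi Y)$ is represented by some $g: \tau_{\leq -i} X \to Y$. Because $\tau_{\leq -i} X \in {\mathcal D}^{\leq -i}$ and $\tau_{>-i} Y \in {\mathcal D}^{\geq -i+1}$, the orthogonality of the $t$-structure kills ${\rm Hom}_{\mathcal D}(\tau_{\leq -i} X, \tau_{>-i} Y)$, so $g$ lifts through $s_Y^i: \tau_{\leq -i} Y \to Y$ as $g = s_Y^i \circ g'$ for some $g': \tau_{\leq -i} X \to \tau_{\leq -i} Y$. By Lemma \ref{8}, $\tau_{\leq -i}: \mathcal F \to \mathcal F[i]$ is fully faithful, so there is a unique $f: X \to Y$ in $\mathcal F$ with $\tau_{\leq -i}(f) = g'$. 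Naturality of $s^i$ gives $f \circ s_X^i = s_Y^i \circ \tau_{\leq -i}(f) = g$, whence $\pi(f) = \varphi$. The main technical obstacle is establishing the colimit description together with the cofinality of the truncation system; once these are in hand, the rest is a clean application of $t$-structure orthogonality, the Calabi-Yau duality of Lemma \ref{17}, and the fully faithfulness of $\tau_{\leq -i}$ from Lemma \ref{8}.
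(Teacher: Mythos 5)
Your proof is correct and follows essentially the same route as the argument the paper invokes (Step 1 of the proof of Proposition 2.9 in \cite{Am08}, which the paper cites rather than reproduces): reduce arbitrary roofs to truncation roofs $\tau_{\leq -i}X \to X$ by cofinality, kill the error terms using $t$-structure orthogonality together with the $(m+2)$-Calabi-Yau duality against $Y \in {}^{\perp}\mathcal{D}^{\leq -m-1}$, and descend along the fully faithful functor $\tau_{\leq -i}$ of Lemma \ref{8}. No gaps.
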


\begin{lem}\label{9}
For any object $X$ in ${\rm per}A$, there exists an integer $r$ and
an object $Z$ in ${\mathcal {F}} [-r]$ such that $\pi X$ and $\pi Z$
are isomorphic objects in the category $\mathcal {C}$.
\end{lem}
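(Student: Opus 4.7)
My plan is to chop off the cohomology of $X$ below a suitable degree using the canonical $t$-structure, and to observe that the chopped-off piece lies in $\mathcal{D}^b(A)$, so that it vanishes in the quotient $\mathcal{C}$.

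I would first invoke lemma \ref{6} to fix integers $N$ and $M$ with $X \in \mathcal{D}^{\leq N} \cap \,^{\perp}\mathcal{D}^{\leq M}$ and to recall that the canonical $t$-structure restricts to per$A$, so that all truncations of $X$ remain in per$A$. Set $r := N$ and $Z := \tau_{\geq N-m} X$. Then $Z$ is perfect, with $H^iZ = H^iX$ for $N-m \leq i \leq N$ and $H^iZ = 0$ otherwise (using $X \in \mathcal{D}^{\leq N}$ for the vanishing in high degrees).

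Next I would identify $\mathcal{F}[-r]$ concretely. Using the standard $t$-structure equality $^{\perp}\mathcal{D}^{\leq -m-1} = \mathcal{D}^{\geq -m}$, we have $\mathcal{F} = \text{per}A \cap \mathcal{D}^{\leq 0} \cap \mathcal{D}^{\geq -m}$, and hence $\mathcal{F}[-r]$ consists of the perfect complexes whose cohomology is supported in degrees $[r-m, r]$. By construction, $Z$ lies in $\mathcal{F}[-N] = \mathcal{F}[-r]$.

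Finally I would compare $X$ and $Z$ via the truncation triangle
$$\tau_{\leq N-m-1}X \longrightarrow X \longrightarrow Z \longrightarrow (\tau_{\leq N-m-1}X)[1].$$
The first term belongs to per$A$ by lemma \ref{6}. Its cohomology groups $H^iX$ for $i \leq N-m-1$ are finite-dimensional, because $H^iX \simeq {\rm Hom}_{\mathcal{D}}(A, X[i])$ and per$A$ is Hom-finite by proposition \ref{10}; moreover only finitely many are nonzero, since the relation $X \in \,^{\perp}\mathcal{D}^{\leq M} = \mathcal{D}^{\geq M+1}$ bounds $X$ cohomologically from below. Hence $\tau_{\leq N-m-1}X$ lies in $\mathcal{D}^b(A)$, is killed by $\pi$, and the triangle yields $\pi X \simeq \pi Z$ in $\mathcal{C}$. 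The only mild subtlety is the identification of $\mathcal{F}[-r]$ with the correct cohomology window, which is immediate from the $t$-structure axioms; otherwise the argument is just the combination of boundedness (lemma \ref{6}) and Hom-finiteness (proposition \ref{10}).
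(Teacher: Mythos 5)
There is a genuine gap, and it comes from a single misconception used twice: you identify $^{\perp}{\mathcal{D}}^{\leq -m-1}$ with ${\mathcal{D}}^{\geq -m}$. The standard $t$-structure orthogonality gives $({\mathcal{D}}^{\leq -m-1})^{\perp} = {\mathcal{D}}^{\geq -m}$, i.e.\ it describes the \emph{right} orthogonal of the aisle; the \emph{left} orthogonal $^{\perp}{\mathcal{D}}^{\leq -m-1}$ appearing in the definition of $\mathcal{F}$ is a different subcategory. Concretely, the free module $A$ lies in ${\mathcal{D}}^{\leq 0} \cap\, ^{\perp}{\mathcal{D}}^{\leq -1} \subseteq \mathcal{F}$, yet for the dg algebras the paper cares about (e.g.\ Ginzburg dg algebras) $H^iA$ is nonzero for infinitely many negative $i$, so $A \notin {\mathcal{D}}^{\geq -m}$. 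Thus $\mathcal{F}$ is \emph{not} the set of perfect complexes with cohomology in the window $[-m,0]$, and your candidate $Z = \tau_{\geq N-m}X$ has no reason to lie in $\mathcal{F}[-r]$. The same confusion invalidates the last step: $X \in\, ^{\perp}{\mathcal{D}}^{\leq M}$ does \emph{not} bound the homology of $X$ below, so $\tau_{\leq N-m-1}X$ need not have bounded (hence finite-dimensional total) homology, need not lie in ${\mathcal{D}}^b(A)$, and is not killed by $\pi$. Running your recipe on $X=A$ would discard $\tau_{\leq -m-1}A$, which is generally a nonzero object of $\mathcal{C}$, so $\pi X \not\simeq \pi Z$.

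The fix is to truncate from the other side, as the paper does: choose $r$ with $X \in {\mathcal{D}}^{\leq m+1-r} \cap\, ^{\perp}{\mathcal{D}}^{\leq r-m-1}$ and set $Z = \tau_{\leq r}X$. The discarded piece $\tau_{>r}X$ has homology bounded below by $r+1$ and above by $m+1-r$, with each $H^i$ finite-dimensional, so it lies in ${\mathcal{D}}^b(A)$ and $\pi X \simeq \pi Z$. Membership of $Z$ in $^{\perp}{\mathcal{D}}^{\leq r-m-1}$ is then \emph{not} a support condition: one applies ${\rm Hom}_{\mathcal{D}}(-,Y)$ to the truncation triangle for $Y \in {\mathcal{D}}^{\leq r-m-1}$, uses $X \in\, ^{\perp}{\mathcal{D}}^{\leq r-m-1}$ for one term, and uses the $(m+2)$-Calabi-Yau property of ${\mathcal{D}}^b(A)$ to convert ${\rm Hom}_{\mathcal{D}}((\tau_{>r}X)[-1],Y)$ into $D\,{\rm Hom}_{\mathcal{D}}(Y,(\tau_{>r}X)[m+1])$, which vanishes for degree reasons. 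That Calabi-Yau step is the essential ingredient your argument is missing.
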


\begin{proof}
Let $X$ be an object in ${\rm per}A$. By lemma \ref{6}, there exists
an integer $r$ such that $X$ is in ${\mathcal{D}}^{{\leq} {m+1-r}}
\cap {^{\perp}{\mathcal{D}}^{{\leq} {r-m-1}}}$. Consider the
triangle
$${\tau}_{\leq {r}} X \longrightarrow X
\longrightarrow {\tau}_{> {r}} X \longrightarrow ({\tau}_{\leq {r}}
X) [1].$$ Let $Y$ be an object in ${\mathcal{D}}^{{\leq} {r-m-1}}$.
Applying the functor ${\rm Hom}_{\mathcal {D}} (-, Y)$, we can get a
long exact sequence $$\ldots \rightarrow {\rm Hom}_{\mathcal {D}}
(X, Y) \longrightarrow {\rm Hom}_{\mathcal {D}} ({\tau}_{\leq {r}}
X, Y) \longrightarrow {\rm Hom}_{\mathcal {D}} (({\tau}_{> {r}}
X)[-1], Y) \rightarrow \ldots .$$ Clearly, the space ${\rm
Hom}_{\mathcal {D}} (X, Y)$ is zero. By the Calabi-Yau property, we
have the isomorphism $${\rm Hom}_{\mathcal {D}} (({\tau}_{> {r}}
X)[-1], Y) \simeq D {\rm Hom}_{\mathcal {D}} ( Y, ({\tau}_{> {r}}
X)[m+1]) = 0 .$$ Therefore, the object ${\tau}_{\leq r} X$ is in
$^{\perp}{\mathcal{D}}^{{\leq} {r-m-1}}$. Thus, we have that
${\tau}_{\leq r} X$ is in ${\mathcal {F}} [-r]$. Let $Z$ denote
${\tau}_{\leq r} X$. Since ${\tau}_{> r} X$ is in ${\mathcal
{D}}$$^b (A)$, the objects $\pi X$ and $\pi Z$ are isomorphic in
$\mathcal {C}$.
\end{proof}

\begin{prop}\label{12}
The projection functor $\pi: {\rm per}A \longrightarrow \mathcal
{C}$ induces a $k$-linear equivalence between $\mathcal {F}$ and
$\mathcal {C}$.
\end{prop}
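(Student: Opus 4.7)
The plan is to deduce the equivalence from the preceding lemma, which asserts that $\pi|_{\mathcal{F}} : \mathcal{F} \to \mathcal{C}$ is fully faithful, by establishing essential surjectivity. The two key tools are Lemma \ref{9}, which places an arbitrary object of ${\rm per}A$ in the essential image of $\pi$ restricted to $\mathcal{F}[-r]$ for some integer $r$, and Lemma \ref{7}, which transports objects from $\mathcal{F}[1]$ back into $\mathcal{F}$ without altering their image in $\mathcal{C}$.

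First, given any $X$ in ${\rm per}A$, I would observe that in applying Lemma \ref{9} we may always choose the integer $r$ to be non-positive. Indeed, the remark following Lemma \ref{6} lets us take $n \geq N_0$ arbitrarily large in the containment $X \in \mathcal{D}^{\leq n} \cap {}^{\perp}\mathcal{D}^{\leq -n}$, and the integer $r$ in Lemma \ref{9} then arises as $r = m+1-n$, which can be made as negative as we wish. Setting $s = -r \geq 0$, Lemma \ref{9} yields an object $Z \in \mathcal{F}[s]$ with $\pi Z \simeq \pi X$ in $\mathcal{C}$. Writing $Z = Z'[s]$ with $Z' \in \mathcal{F}$, the task is reduced to finding $W \in \mathcal{F}$ with $\pi W \simeq (\pi Z')[s]$.

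I would construct $W$ by induction on $s$. The base case $s = 0$ is trivial, with $W = Z'$. For the inductive step, assume $W_s \in \mathcal{F}$ satisfies $\pi W_s \simeq (\pi Z')[s]$. Then $W_s[1]$ is an object of $\mathcal{F}[1]$, so Lemma \ref{7} (with $N = W_s$) produces $W_{s+1} \in \mathcal{F}$ fitting into a triangle $W_s[1] \to W_{s+1} \to M \to W_s[2]$ in which $M$ lies in the heart $\mathcal{H}$ and, being a finite-dimensional $H^0 A$-module, belongs to $\mathcal{D}^b(A)$. Passing to $\mathcal{C}$, the vertex $M$ becomes zero and the first arrow becomes an isomorphism, so $\pi W_{s+1} \simeq \pi(W_s[1]) = (\pi W_s)[1] \simeq (\pi Z')[s+1]$, closing the induction. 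After $s$ iterations starting from $W_0 = Z'$ we obtain the desired $W \in \mathcal{F}$ with $\pi W \simeq \pi X$.

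The main potential obstacle is the need to iterate Lemma \ref{7}: that lemma only handles objects of $\mathcal{F}[1]$, whereas an object of $\mathcal{F}[s]$ for $s \geq 2$ does not in general lie in $\mathcal{F}[1]$. The bookkeeping that circumvents this is the simple observation that the output $W_s$ of each inductive step lies in $\mathcal{F}$, so that $W_s[1]$ automatically lies in $\mathcal{F}[1]$, exactly the hypothesis Lemma \ref{7} requires. This is also why we must take $r \leq 0$ at the very start: the induction moves in the direction of raising the shift, from $\mathcal{F}$ to $\mathcal{F}[1]$, so the initial object $Z$ must already sit in a non-negative shift of $\mathcal{F}$.
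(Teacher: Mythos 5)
Your argument is correct and follows essentially the same route as the paper's: reduce via Lemma \ref{9} (together with the remark after Lemma \ref{6}) to an object of $\mathcal{F}[s]$ with $s \geq 0$, then induct on the shift, using the triangle of Lemma \ref{7} --- whose third vertex $M$ lies in $\mathcal{D}^b(A)$ and hence vanishes in $\mathcal{C}$ --- to raise the shift one step at a time while staying inside $\mathcal{F}$. The paper phrases the inductive step through the equivalence $\tau_{\leq -1}\colon \mathcal{F} \to \mathcal{F}[1]$ obtained from Lemmas \ref{7} and \ref{8}, but this is the same mechanism.
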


\begin{proof}
We only need to show that $\pi$ restricted to $\mathcal {F}$ is
dense. Let $X$ be an object in ${\rm per}A$. Then there exists an
integer $r$ such that, the object $X$ is in ${\mathcal{D}}^{{\leq}
{m+1-r}} \cap {^{\perp}{\mathcal{D}}^{{\leq} {r-m-1}}}$, the object
${\tau_{\leq {r}}} X$ is in ${\mathcal {F}}[-r]$, and $\pi X$ is
isomorphic to $\pi ({\tau_{\leq r}} X)$ in $\mathcal {C}$. Now we do
induction on the number $r$. From the remark of lemma \ref{6}, we
can suppose that $r \leq 0$.

If $r=0$, the object $\tau_{\leq 0}X$ is in $\mathcal {F}$, and $\pi
(\tau_{\leq 0}X)$ is isomorphic to the image $\pi X$of $X$ in
$\mathcal {C}$.

Suppose when $r = r_0 \leq 0$, one can find an object $Y$ in
$\mathcal {F}$ such that $\pi Y$ is isomorphic to $\pi X$ in
$\mathcal {C}$.

Consider the case $r = r_0 - 1$. Then ${\tau_{\leq {r_0 -1}}} X$ is
in ${\mathcal {F}}[1 - r_0]$. Set $Z = ({\tau_{\leq {r_0 -1}}} X)
[-1]$. Thus, the object $Z$ is in ${\mathcal {F}} [-r_0]$. By
hyperthese, there exists an object $Y$ in $\mathcal {F}$ such that
$\pi Y$ is isomorphic to $\pi Z$ in $\mathcal {C}$. Therefore, we
have following isomorphisms in $\mathcal {C}$
$$\pi Y \simeq \pi Z = \pi (({\tau_{\leq {r_0 -1}}} X)[-1])
\simeq (\pi ({\tau_{\leq {r_0 -1}}} X)) [-1] \simeq (\pi X) [-1].$$
Since $Y [1]$ is in ${\mathcal {F}} [1]$ and ${\tau_{\leq {-1}}}:
{\mathcal {F}} \longrightarrow {\mathcal {F}}[1]$ is an equivalence,
there exists an object $N$ in $\mathcal {F}$ such that ${\tau_{\leq
{-1}}} N$ is isomorphic to $Y[1]$. As a consequence, the following
isomorphisms hold in $\mathcal {C}$ $$\pi N \simeq \pi ({\tau_{\leq
{-1}}} N) \simeq \pi (Y[1]) \simeq (\pi Y) [1] \simeq \pi X .$$
Hence we can deduce that for each object $T$ in $\mathcal {C}$,
there exists an object $T'$ in ${\mathcal {F}}$ such that $\pi T'$
is isomorphic to $T$ in $\mathcal {C}$.
\end{proof}

We call $\mathcal {F}$ the {\em fundamental domain}.

\smallskip

$\mathbf{Proof \, of \, the \, main \, theorem \, \ref{1}.}$
\begin{proof}
Proposition \ref{10} and proposition \ref{11} have shown that the
category $\mathcal{C}$ is Hom-finite and $(m+1)$-Calabi-Yau.

Now we only need to show that the object $\pi A$ is an $m$-cluster
tilting object whose endomorphism algebra is isomorphic to the
zeroth homology $H^0 A$ of $A$.

Since $A$ is in the subcategory ${\mathcal{D}}^{{\leq} {0}} \cap \,
^{\perp}{\mathcal{D}}^{{\leq} {-1}}$, $A[i]$ is in
${\mathcal{D}}^{{\leq} {-i}} \cap \, ^{\perp}{\mathcal{D}}^{{\leq}
{-i-1}}$. Thus, the objects $A[i] (1 \leq i \leq m)$ are in the
fundamental domain $\mathcal {F}$. Following proposition \ref{12},
the functor $\pi: {\rm per}A \longrightarrow \mathcal {C}$ induces
an equivalence between $\mathcal {F}$ and $\mathcal {C}$, so we have
that
\begin{flushleft}
$\quad \quad\quad\quad {\rm Hom}_{\mathcal {C}} (\pi A,\pi(A[i]))
\simeq {\rm Hom}_{\mathcal {F}} (A, A[i]) = {\rm Hom}_{\mathcal {D}}
(A, A[i])$
\\
$$ \quad \quad\quad\quad \simeq H^i A = \left\{
                        \begin{array}{ll}
H^0 A, & i = 0; \\
                          \, 0,  & 1 \leq i \leq m .
                        \end{array}
\right.$$
\end{flushleft}
Therefore, the endomorphism algebra of $\pi A$ is isomorphic to the
zeroth homology $H^0 A$ of $A$, and $${\rm Hom}_{\mathcal {C}} (\pi
A,(\pi A[r])) = 0 , \, r = 1, \ldots, m .$$

Let $X$ be an object in $\mathcal {F}$. According to lemma \ref{3},
there exist $m$ triangles where $Q_0, Q_1, \ldots , Q_{m-1}$ are
free $A$-modules and $P_m$ is in $add A$.

Now we will show the following isomorphisms
$${\rm Ext}_{\mathcal
{D}}^1 (P_{m-1}, Y) \simeq {\rm Ext}_{\mathcal {D}}^2 (P_{m-2}, Y)
\simeq \ldots \simeq {\rm Ext}_{\mathcal {D}}^m (X, Y), \, Y \in
{\mathcal {D}}_{\leq 0}. \quad \quad\quad (1)$$ Applying ${\rm
Hom}_{\mathcal {D}} (-, Y[j])$ to the triangle (here we write $P_0$
instead of $X$)
$$P_{m-j+1} {\longrightarrow} Q_{m-j} {\longrightarrow} P_{m-j}
{\longrightarrow} P_{m-j+1} [1] , \, j= 2, \ldots {m},$$ we can get
a long exact sequence $$\ldots \longrightarrow {\rm Hom}_{\mathcal
{D}} (Q_{m-j}[1], Y[j]) \longrightarrow {\rm Hom}_{\mathcal {D}}
(P_{m-j+1}[1], Y[j]) \longrightarrow $$
$$\longrightarrow {\rm Hom}_{\mathcal {D}} (P_{m-j}, Y[j])
\longrightarrow {\rm Hom}_{\mathcal {D}} (Q_{m-j},
Y[j])\longrightarrow \ldots .$$ Since $Q_{m-j}$ are free
$A$-modules, the spaces ${\rm Hom}_{\mathcal {D}} (Q_{m-j}[i],
Y[j])$ are zero for $i = 0, 1$. Therefore, we have the following
isomorphisms
$${\rm Ext}_{\mathcal {D}}^j (P_{m-j}, Y) \simeq {\rm Hom}_{\mathcal {D}}
(P_{m-j}, Y[j]) \simeq {\rm Hom}_{\mathcal {D}} (P_{m-j+1}[1],
Y[j])$$ $$\quad \quad\quad\quad \simeq {\rm Ext}_{\mathcal
{D}}^{j-1} (P_{m-j+1}, Y), \,\quad j=2, \ldots m.$$ It follows that
$(1)$ is true.

Next applying ${\rm Hom}_{\mathcal {D}} (-, Y[j])$ to the triangle
$$P_{m-j} {\longrightarrow} Q_{m-j-1} {\longrightarrow} P_{m-j-1}
{\longrightarrow} P_{m-j} [1] , \quad j= 2, \ldots {m-1} ,$$
similarly we can obtain the following isomorphisms $${\rm
Ext}_{\mathcal {D}}^1 (P_{m-2}, Y) \simeq {\rm Ext}_{\mathcal {D}}^2
(P_{m-3}, Y) \simeq \ldots \simeq {\rm Ext}_{\mathcal {D}}^{m-1} (X,
Y), \quad Y \in {\mathcal {D}}_{\leq 0}.$$

Thus, we can get a list of isomorphisms
$${\rm Ext}_{\mathcal {D}}^1 (P_{m-i}, Y) \simeq {\rm Ext}_{\mathcal
{D}}^{m+1-i} (X, Y), \, 1 \leq i \leq {m-1}, \, Y \in {\mathcal
{D}}_{\leq 0}.$$

Suppose that $Z$ is an object in $\mathcal {C}$ such that the space
${\rm Hom}_{\mathcal {C}} (Z, (\pi A)[i])$ vanishes for each $1 \leq
i \leq m .$ Since the functor $\pi: {\rm per}A \longrightarrow
\mathcal {C}$ induces an equivalence between $\mathcal {F}$ and
$\mathcal {C}$, there exists an object $X$ in $\mathcal {F}$ such
that $\pi X$ is isomorphic to $Z$ in $\mathcal {C}$. Therefore, we
have the following isomorphisms
$${\rm Hom}_{\mathcal {C}} (Z, (\pi A)[i]) \simeq {\rm
Hom}_{\mathcal {C}} (\pi X, (\pi A)[i]) \simeq {\rm
Hom}_{\mathcal{D}} (X, A[i])$$
$$\quad\quad \simeq {\rm Ext}_{\mathcal{D}}^i (X, A), \,\quad 1
\leq i \leq m .$$ Hence, we have $${\rm Ext}_{\mathcal{D}}^1
(P_{m-i}, A) \simeq {\rm Ext}_{\mathcal {D}}^{m+1-i} (X, A) = 0 ,
\quad 1 \leq i \leq {m-1}.$$

As a consequence, the triangle $$P_m {\longrightarrow} Q_{m-1}
{\longrightarrow} P_{m-1} {\longrightarrow} P_m [1]$$ splits, then
the object $P_{m-1}$ is in $add A$. Next the triangle $$P_{m-1}
{\longrightarrow} Q_{m-2} {\longrightarrow} P_{m-2}
{\longrightarrow} P_{m-1} [1]$$ also splits, then the object
$P_{m-2}$ is also in $add A$. By iterated arguments, we can get that
$P_i ~(1 \leq i \leq m)$ and $X$ are all in $add A$. Thus, the
object $Z$, which is isomorphic to $\pi X$ in $\mathcal {C}$, is in
the subcategory $add \pi A$. Hence, the object $\pi A$ is an
$m$-cluster tilting object in the category $\mathcal {C}$.
\end{proof}

\vspace{.3cm}

\section{Generalized higher cluster categories associated to Ginzburg dg categories}

In \cite{Gi06}, V. Ginzburg defined the Ginzburg dg algebra
$\Gamma(Q,W)$ associated to a quiver with potential $(Q,W)$, where
the arrows of the quiver $Q$ are concentrated in degree 0.
Generally, let $Q$ be a graded $k$-quiver such that the set of
objects is finite and $Q(x,y)$ is a finite-dimensional $k$-module
for all objects $x$ and $y$. Let $\mathcal {R}$ be the discrete
$k$-category. Denote by $\mathcal {A}$ the tensor category
$T_{\mathcal {R}}(Q)$. Fixing an integer $n$ and a superpotential
$W$ in the cyclic homology $HC_{n-3}(\mathcal {A})$, {\it i.e.}~a
linear combination of cycles considered up to cyclic permutation
`with signs' of degree $3-n$, the Ginzburg dg category $\Gamma_n
(Q,W)$ is defined in \cite{Ke09} as the tensor category over
$\mathcal {R}$ of the bimodule
$$\widetilde{Q} = Q \oplus Q^{\vee}[n-2] \oplus {\mathcal
{R}}[n-1]$$ endowed with the unique differential which
\begin{itemize}
\item[a)] vanishes on $Q$;
\item[b)] takes the element $a^{\ast}$ of $Q^{\vee}[n-2]$ to the
cyclic derivative $\partial_a W$  for each arrow $a$ in $Q_1$, which
is by definition $\partial _a$ taking a path $p$ to the sum
$\sum_{p=uav} \pm vu$, here the sign is computed by Koszul sign
rule;
\item[c)] takes the element $t_x$ of ${\mathcal {R}}[n-1]$ to
$(-1)^n$ id$_x (\sum_{v \in Q_1} [v,v^{\ast}])$ id$_x$ for each
object $x$ in $Q_0$, where [,] denotes the supercommutator.
\end{itemize}

\begin{thm}[\cite{Ke09}]\label{14}
The Ginzburg dg category $\Gamma_n (Q, W)$ is homologically smooth
and n-Calabi-Yau.
\end{thm}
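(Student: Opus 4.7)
The plan is to construct an explicit projective $\Gamma^e$-bimodule resolution of $\Gamma = \Gamma_n(Q,W)$ of length $n$ and show that it is self-dual up to a shift by $[-n]$. Homological smoothness will then follow because the resolution consists of finitely generated projective bimodules, and the $n$-Calabi-Yau property, i.e.\ the isomorphism $\RHom_{\Gamma^e}(\Gamma, \Gamma^e) \simeq \Gamma[-n]$ in $\mathcal{D}(\Gamma^e)$, will follow from the self-duality.

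The candidate resolution mirrors the three summands of $\widetilde{Q} = Q \oplus Q^{\vee}[n-2] \oplus \mathcal{R}[n-1]$ together with an extra copy of $\mathcal{R}$:
$$0 \to \Gamma \otimes_{\mathcal{R}} \mathcal{R}[n] \otimes_{\mathcal{R}} \Gamma \to \Gamma \otimes_{\mathcal{R}} Q^{\vee}[n-1] \otimes_{\mathcal{R}} \Gamma \to \Gamma \otimes_{\mathcal{R}} Q \otimes_{\mathcal{R}} \Gamma \to \Gamma \otimes_{\mathcal{R}} \Gamma \to \Gamma \to 0.$$
The outermost map is multiplication; the next differential sends $1 \otimes a \otimes 1$ to $a \otimes 1 - 1 \otimes a$; the middle differential is assembled from the cyclic derivatives $\partial_a W$ by splitting each path of $\partial_a W$ into elementary tensors; and the leftmost differential encodes the commutator $\sum_v [v, v^{\ast}]$ that appears in the definition of $d(t_x)$. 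That these compose to zero should reduce precisely to the defining identities for $W$ and for the internal differential on $\Gamma$. I would then verify exactness by filtering $\Gamma$ by path length in $\widetilde{Q}$ and reducing to a Koszul-type resolution for the tensor algebra $T_{\mathcal{R}}(\widetilde{Q})$ at the associated graded level.

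For the Calabi-Yau property, I would apply $\Hom_{\Gamma^e}(-, \Gamma^e)$ term by term using the identification $(\Gamma \otimes_{\mathcal{R}} M \otimes_{\mathcal{R}} \Gamma)^{\vee} \simeq \Gamma \otimes_{\mathcal{R}} M^{\vee} \otimes_{\mathcal{R}} \Gamma$ for finite-dimensional $\mathcal{R}$-bimodules $M$. The dual pairing between $Q$ and $Q^{\vee}[n-2]$, combined with the cyclic invariance of $W$, should identify the dualized complex with the original resolution shifted by $[-n]$. The main obstacle is precisely this last step: one must check that dualization commutes, up to signs, with the differential built from $\partial_a W$, which is where cyclic invariance of the superpotential enters essentially and where careful Koszul-sign bookkeeping is unavoidable. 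Exactness of the resolution, while nontrivial, is the more routine of the two difficulties.
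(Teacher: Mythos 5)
The paper does not actually prove this statement: it is quoted from Keller's \emph{Deformed Calabi--Yau completions} \cite{Ke09}, where the argument runs along a quite different route. There one first proves, for an arbitrary homologically smooth dg category $\mathcal{A}$ and a class $c \in HC_{n-2}(\mathcal{A})$, that the deformed $n$-Calabi--Yau completion $\Pi_n(\mathcal{A},c)=T_{\mathcal{A}}(\theta)$ (with $\theta$ a cofibrant model of the shifted inverse dualizing bimodule and the differential deformed by $c$) is homologically smooth and $n$-Calabi--Yau, and one then identifies $\Gamma_n(Q,W)$ with $\Pi_n(T_{\mathcal{R}}(Q),c)$ for $c=[W]$. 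Your proposal is instead the direct Ginzburg-style computation with an explicit self-dual bimodule resolution. This is a genuinely different and, in outline, legitimate approach --- it is essentially what Van den Bergh's appendix to \cite{Ke09} carries out in abstract form. It buys a completely explicit model of ${\rm RHom}_{\Gamma^e}(\Gamma,\Gamma^e)$, at the price of the sign and degree bookkeeping you already anticipate; Keller's route buys functoriality (Morita invariance, compatibility with tilting, which this paper exploits in its Example) at the price of abstraction.

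Two points need repair before your sketch becomes a proof. First, the four-term sequence should not be asserted as a literal exact sequence with the displayed arrows. The correct object is the standard two-term cofibrant resolution of the tensor algebra, ${\rm Cone}\bigl(\Gamma\otimes_{\mathcal{R}}\widetilde{Q}\otimes_{\mathcal{R}}\Gamma \to \Gamma\otimes_{\mathcal{R}}\Gamma\bigr)\simeq\Gamma$, in which the maps you describe (splitting the paths of $\partial_a W$, the commutator $\sum_v[v,v^{\ast}]$) appear as components of the \emph{twisted internal differential} on $\Gamma\otimes_{\mathcal{R}}\widetilde{Q}\otimes_{\mathcal{R}}\Gamma$ forced by $d(a^{\ast})=\partial_a W$ and $d(t_x)=\pm\sum_v[v,v^{\ast}]$. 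Read this way, exactness --- hence homological smoothness --- is immediate from the tensor-algebra splitting and needs no path-length filtration; read literally, a four-term exact sequence of exactly this shape is the statement that famously \emph{fails} in general for the Jacobi algebra $H^0\Gamma$ in Ginzburg's non-dg setting, so you must be careful not to import that claim (your internal shifts also need adjusting: the $Q$-term should carry an extra $[1]$ to match the totalization). Second, in the duality step the dualized resolution is not literally equal to a shift of the original: one constructs a comparison morphism of bimodule complexes and must prove it is a quasi-isomorphism, and this is precisely where cyclic invariance of $W$ --- more precisely, the fact that an honest superpotential lifts to negative cyclic homology --- is the essential input. For $W$ a genuine linear combination of cycles this can be verified by hand, so the remaining gap is one of care rather than of substance.
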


For simplicity, set $\Gamma^{(n)}$ as the Ginzburg dg category
$\Gamma_n (Q,W)$ associated to a graded quiver with superpotential
$(Q,W)$. Moreover, we assume that the arrows of $Q$ are concentrated
in nonpositive degrees. We denote the minimal degree by $N_Q$.

\begin{thm}\label{20}
Let ${N_Q \geq -m}$ be a nonpositive integer. Suppose that the
zeroth homology of the Ginzburg dg category $\Gamma^{(m+2)}$ is
finite-dimensional. Then the {\em generalized $m$-cluster category
$${\mathcal {C}}^{(m)}_{(Q,W)} = {{\rm
per}{\Gamma^{(m+2)}}/{{\mathcal {D}}^b{\Gamma^{(m+2)}}}}$$
associated to $(Q,W)$} is {\rm Hom}-finite and $(m+1)$-Calabi-Yau.
Moreover, the image of the free module $\Gamma^{(m+2)}$ in
${\mathcal {C}}^{(m)}_{(Q,W)}$ is an $m$-cluster tilting object
whose endomorphism algebra is isomorphic to the zeroth homology of
$\Gamma^{(m+2)}$.
\end{thm}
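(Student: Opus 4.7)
The plan is to reduce Theorem~\ref{20} directly to the main Theorem~\ref{1}. Setting $A = \Gamma^{(m+2)}$, it suffices to verify that $A$ satisfies the four properties~$(\star)$; then all three assertions of Theorem~\ref{20} (Hom-finiteness, the $(m{+}1)$-Calabi-Yau property, and the $m$-cluster tilting property of the image of $A$ with endomorphism algebra $H^0 A$) follow verbatim from Theorem~\ref{1}.

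Properties (a) and (d) of $(\star)$ are exactly the content of Theorem~\ref{14} applied with $n = m+2$: the Ginzburg dg category is homologically smooth and $(m{+}2)$-Calabi-Yau as a bimodule. Property (c) is one of the standing hypotheses of Theorem~\ref{20}. Thus the only non-trivial verification is property~(b), namely that $H^p \Gamma^{(m+2)} = 0$ for all $p > 0$.

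The main (and essentially only) step of the proof is therefore a degree count on the generators of $A = T_{\mathcal{R}}(\widetilde{Q})$, where $\widetilde{Q} = Q \oplus Q^{\vee}[m] \oplus \mathcal{R}[m+1]$. I would check each summand separately. First, by hypothesis the arrows of $Q$ lie in degrees in $[N_Q, 0] \subseteq [-m, 0]$. Second, if $a \in Q_1$ has degree $d \in [-m, 0]$, then $a^{\ast} \in Q^{\vee}$ has degree $-d$, and so sits in degree $-d - m \in [-m, 0]$ inside $Q^{\vee}[m]$. Third, each loop $t_x \in \mathcal{R}$ has degree $0$ and therefore sits in degree $-(m+1)$ inside $\mathcal{R}[m+1]$. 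Consequently every generator of $\widetilde{Q}$ lies in nonpositive degrees, and since tensor products add degrees, $\Gamma^{(m+2)}$ is concentrated in nonpositive degrees already at the level of the underlying graded $k$-module. In particular $(\Gamma^{(m+2)})^p = 0$ and hence $H^p \Gamma^{(m+2)} = 0$ for every $p > 0$, establishing~(b).

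Once $(\star)$ is in place, Theorem~\ref{1} applies to $A = \Gamma^{(m+2)}$ and delivers the three conclusions of Theorem~\ref{20} directly. There is no serious obstacle in the argument: the one place where real care is needed is the shift convention used for $Q^{\vee}[m]$ and $\mathcal{R}[m+1]$, and it is precisely here that the hypothesis $N_Q \geq -m$ is essential---were some arrow to have degree strictly less than $-m$, its dual in $Q^{\vee}[m]$ would sit in strictly positive degree and the vanishing in~(b) would genuinely fail.
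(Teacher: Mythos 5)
Your proposal is correct and follows exactly the paper's route: reduce to Theorem~\ref{1} by verifying the properties $(\star)$, with (a) and (d) supplied by Theorem~\ref{14}, (c) by hypothesis, and (b) by the degree count on the generators of $\widetilde{Q} = Q \oplus Q^{\vee}[m] \oplus \mathcal{R}[m+1]$ using $N_Q \geq -m$. Your write-up is in fact more explicit than the paper's (which only notes that $Q^{\vee}[m]$ is concentrated in nonpositive degrees), and your degree conventions match those used in the paper's Example.
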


\begin{proof}
Since the nonpositive integer $N_Q \geq -m$, the elements of $Q^\vee
[m]$ are concentrated in nonpositive degrees. Then the Ginzburg dg
category $\Gamma^{(m+2)}$ has its homology concentrated in
nonpositive degrees. We have that the $p$-th homology $H^p
\Gamma^{(m+2)}$ is zero for each integer $p>0$. By assumption, the
space $H^0 \Gamma^{(m+2)}$ is finite-dimensional. Combining with
theorem \ref{14}, the dg algebra $\Gamma^{(m+2)}$ satisfies the four
properties ($\star$). We apply the main theorem \ref{1} in
particular to $\Gamma^{(m+2)}$. Then the result clearly holds.
\end{proof}

The following corollary considers acyclic quivers with zero
superpotential. In this case, the generalized higher cluster
category ${\mathcal {C}}_{{(Q,0)}}^{(m)}$ recovers the higher
cluster category ${\mathcal {C}}_Q^{(m)}$.

\begin{cor}\label{21}
Let $k$ be an algebraically closed field and $m$ a positive integer.
Suppose that $Q$ is an acyclic ordinary quiver. Then the generalized
$m$-cluster category ${\mathcal {C}}^{(m)}_{(Q, 0)}$ is triangle
equivalent to the orbit category ${\mathcal {C}}_{Q}^{(m)}$ of the
bounded derived category ${\mathcal {D}}$$^b ({\rm {mod}} kQ)$ under
the action of the automorphism ${\tau}^{-1} {\Sigma}^m (= {\nu}^{-1}
{\Sigma}^{m+1})$, where $\Sigma$ (resp. $\nu$) is the suspension
functor (resp. Serre functor) and $\tau$ is the Auslander-Reiten
translation.
\end{cor}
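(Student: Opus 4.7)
The plan is to deduce this corollary by combining Theorem~\ref{20} with two theorems of Keller: the description of the quotient $\mathrm{per}\,\Pi_n/\mathcal{D}^b\Pi_n$ via Calabi--Yau completions, and the fact that, for hereditary algebras, orbit categories of the bounded derived category are themselves triangulated. The key identification is that for an acyclic ordinary quiver $Q$ and zero superpotential, the Ginzburg dg category $\Gamma^{(m+2)} = \Gamma_{m+2}(Q,0)$ is precisely the $(m+2)$-Calabi--Yau completion $\Pi_{m+2}(kQ)$ in Keller's sense \cite{Ke09}.

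First, I would verify the hypotheses of Theorem~\ref{20}: the arrows of $Q$ sit in degree $0 \geq -m$, and since $Q$ is acyclic, $kQ$ is finite-dimensional; combined with the fact that the $\Gamma^{(m+2)}$-differential is zero on the copy of $Q$, one checks $H^0 \Gamma^{(m+2)} \cong kQ$, which is finite-dimensional. Theorem~\ref{20} then yields that $\mathcal{C}^{(m)}_{(Q,0)}$ is Hom-finite, $(m+1)$-Calabi--Yau, with $m$-cluster tilting object $\pi \Gamma^{(m+2)}$ of endomorphism algebra $kQ$.

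Second, by Keller's theorem on Calabi--Yau completions \cite{Ke09}, there is a canonical triangle functor
\[
\mathcal{D}^b(\mathrm{mod}\, kQ) \longrightarrow \mathrm{per}\,\Gamma^{(m+2)}/\mathcal{D}^b \Gamma^{(m+2)} = \mathcal{C}^{(m)}_{(Q,0)}
\]
which annihilates the action of the twist $F = \nu^{-1} \Sigma^{m+1} = \tau^{-1} \Sigma^{m}$ (using $\nu = \tau \Sigma$), and which induces a triangle equivalence between the \emph{triangulated hull} of the orbit category $\mathcal{D}^b(\mathrm{mod}\, kQ)/F$ and $\mathcal{C}^{(m)}_{(Q,0)}$. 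Third, by Keller's theorem on triangulated orbit categories \cite{Ke05}, since $Q$ is acyclic and $F = \tau^{-1}\Sigma^m$ satisfies the required separation condition on indecomposables, the orbit category $\mathcal{C}^{(m)}_Q = \mathcal{D}^b(\mathrm{mod}\, kQ)/\tau^{-1}\Sigma^m$ is already triangulated, hence coincides with its triangulated hull. Composing the two equivalences gives the desired triangle equivalence $\mathcal{C}^{(m)}_Q \simeq \mathcal{C}^{(m)}_{(Q,0)}$.

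The main obstacle is the first step: identifying $\Gamma_{m+2}(Q,0)$ with $\Pi_{m+2}(kQ)$ and pinning down the precise form of the twist functor on $\mathcal{D}^b(\mathrm{mod}\,kQ)$ that appears in Keller's Calabi--Yau completion theorem; once the functor is correctly recognized as $\tau^{-1}\Sigma^m$ via the relation $\nu = \tau\Sigma$, the remaining pieces are quotations of known results applied in sequence.
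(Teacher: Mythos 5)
Your argument is correct in outline, but it takes a genuinely different route from the paper's. The paper deduces the corollary from the recognition theorem for acyclic Calabi--Yau categories (Theorem 4.2 of \cite{KR08}): beyond Theorem \ref{20}, the only additional verification is the vanishing $\Hom_{\mathcal C}(T,\Sigma^{-i}T)=0$ for $1\le i\le m-1$, which it obtains by noting that $\Sigma^i\Gamma^{(m+2)}$ lies in the fundamental domain $\mathcal F$ for $0\le i\le m$, so that these spaces are computed as $H^{-i}\Gamma^{(m+2)}$, and these homologies vanish because for an acyclic ordinary quiver the arrows of $\widetilde Q$ are concentrated in degrees $0$, $-m$, $-m-1$. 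That vanishing is exactly the extra hypothesis the recognition theorem requires and your route never needs; it is also where the assumption that $k$ be algebraically closed enters (it is a hypothesis of \cite{KR08}). Your route --- identify $\Gamma_{m+2}(Q,0)$ with the Calabi--Yau completion $\Pi_{m+2}(kQ)$ (Theorem 6.3 of \cite{Ke09}, which the paper itself quotes in its Example), pass through the triangulated hull of the orbit category, and invoke \cite{Ke05} to see that for hereditary $kQ$ the orbit category under $\tau^{-1}\Sigma^m$ is already triangulated --- is sound, and is essentially what the paper's Section 4 carries out in general: Theorem \ref{16} applied to $A=kQ$ and $n=m+1$ (the nilpotence of ${\rm Tor}^{kQ}_{m+1}(-,D(kQ))$ being automatic since $kQ$ is hereditary) yields the equivalence $\langle kQ\rangle_B/\per B\simeq\per\Pi_{m+2}(kQ)/{\mathcal D}^b\Pi_{m+2}(kQ)$ that you want. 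Your approach buys an explicit comparison functor from ${\mathcal D}^b({\rm mod}\,kQ)$ and dispenses with the algebraic closedness of $k$; the paper's is shorter once Theorem \ref{20} is in place. The one point to tighten in your write-up is the attribution of the ``triangulated hull'' step: for general $m$ it is not a direct quotation of \cite{Ke09} alone, but must be assembled from the Section 4 machinery (or an equivalent statement) together with \cite{Ke05}.
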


\begin{proof}
Since $Q$ is an acyclic ordinary quiver, the degrees of the arrows
of $\widetilde{Q}$ concentrate in $0, -m, -m-1$ and the homology
$H^{-i} \Gamma^{(m+2)}$ vanishes for each $1 \leq i \leq m-1$.

Since $W$ is zero (in fact, if $m \geq 2$, the only superpotential
is the zero one, otherwise, the degrees of the homogeneous summands
of superpotentials are $1-m (\leq -1)$, while the degrees of the
arrows are zero), the zeroth homology of $\Gamma^{(m+2)}$ is the
finite-dimensional path algebra $kQ$.

Following theorem \ref{20}, the generalized $m$-cluster category
${\mathcal {C}}^{(m)}_{(Q,0)}$ is $(m+1)$-Calabi-Yau, and the image
of $\Gamma^{(m+2)}$ (denoted by $T$) is an $m$-cluster tilting
object whose endomorphism algebra is isomorphic to the
finite-dimensional hereditary algebra $kQ$.

Moreover, from the proof of the main theorem \ref{1}, we know that
the objects ${\Sigma}^i {\Gamma}^{(m+2)} (0 \leq i \leq m)$ are in
the fundamental domain $\mathcal {F}$. Therefore, the following
isomorphisms hold $${\rm Hom}_{\mathcal {C}} (T, {\Sigma}^{-i}T)
\simeq {\rm Hom}_{\mathcal {C}} ({\Sigma}^i T, T) \simeq {\rm
Hom}_{\mathcal {D}} ({\Sigma}^i {\Gamma}^{(m+2)},
{\Gamma}^{(m+2)})$$ $$\quad \quad \quad \quad \quad \quad \quad
\simeq H^{-i} {\Gamma}^{(m+2)} = 0, \quad {\mbox {for each $1 \leq i
\leq m-1$}},$$ where $\mathcal {C}$ denotes the generalized
$m$-cluster category ${\mathcal {C}}^{(m)}_{(Q,0)}$. Hence,
following theorem 4.2 in \cite{KR08}, there is a triangle
equivalence from ${\mathcal {C}}^{(m)}_{(Q,0)}$ to ${\mathcal
{C}}^{(m)}_{Q}$.
\end{proof}

\begin{example}




Suppose $m$ is 2. Let us consider the graded quiver $Q$
\[
\xymatrix{ & 2 \ar[rd]^c & \\ 1 \ar[ur]^a & & 3 , \ar[ll]^b}
\]
where $deg (a) =-1, deg (b) = deg (c) = 0,$ with superpotential $W =
abc$.

The Ginzburg dg category $\Gamma^{(4)} = \Gamma _4 (Q, W)$ is the
tensor category whose underlying graded quiver is $\widetilde{Q}$
\[
\xymatrix @H=1.2cm { & 2 \ar@(ur,lu)[]^{t_2} \ar@<1ex>[ld]^{a^{\ast}} \ar@<1ex>[rd]^c & \\
1 \ar@(ul,ld)[]^{t_1} \ar@<1ex>[ur]^a \ar@<1ex>[rr]^{b^{\ast}}& & 3
\ar@(ur,rd)[]^{t_3} \ar@<1ex>[ll]^b \ar@<1ex>[ul]^{c^{\ast}}}
\]
where $deg (a^{\ast}) =-1, deg (b^{\ast}) = deg (c^{\ast}) =-2$ and
$deg (t_i) =-3$ for $1 \leq i \leq 3.$ Its differential takes the
following values on the arrows of $\widetilde{Q}$:
\begin{center}
$d(a^{\ast}) = bc, d(b^{\ast}) = ca, d(c^{\ast}) = ab,$ \\ $d(t_1) =
bb^{\ast} + a^{\ast}a, d(t_2) = aa^{\ast} - c^{\ast}c, d(t_3) =
cc^{\ast} - b^{\ast}b.$\end{center}

The zeroth homology $H^0 \Gamma^{(4)}$ equals to the path algebra
with relation $kQ^{(0)}/(bc)$, whose $k$-basis is $\{e_1, e_2, e_3,
 b, c\}$. Therefore, the dimension of $H^0 \Gamma^{(4)}$ is 5.

Following theorem \ref{20}, the image of $\Gamma^{(4)}$ in the
generalized cluster category ${\mathcal {C}}^{(2)}_{(Q,W)}$ is a
2-cluster tilting object, whose endomorphism algebra is given by the
following quiver with the relation
\[
\xymatrix{2 \ar[r]^c & 3 \ar[r]^b & 1\, , & bc = 0.}
\]

In the following, we will show that the generalized cluster category
${\mathcal {C}}^{(2)}_{(Q,W)}$ and the orbit category ${\mathcal
{C}}_{A_3}^{(2)}$ are triangle equivalent.

Let $Q'$ be the quiver \[ \xymatrix{ & 2 \ar[ld]_{\alpha} & \\ 1
\ar[rr]^{\beta} & & 3 , }
\] with $deg(\alpha) = deg(\beta) =0$. We denote the indecomposable
projective mo-\\dule $e_i kQ'$ by $P'_i$, and its corresponding
simple module by $S'_i$ for $1 \leq i \leq 3$. Let $T$ be the almost
tilting module $P'_2 \oplus P'_3$. Its two complements are $P'_1$
and $S'_3$. We write $\overline{T}$ as the direct sum $S'_3 \oplus
P'_2 \oplus P'_3$. Then we have the derived equivalence $${\mathcal
{D}}{\rm End}(\overline{T}) \simeq {\mathcal {D}}({\rm mod} kQ').$$
Following proposition 4.2 in \cite{Ke09}, the derived
4-preprojective dg algebras $\Pi_4 ({\rm End}(\overline{T}),0)$ and
$\Pi_4 (kQ',0)$ are Morita equivalent. Moreover, by theorem 6.3 in
\cite{Ke09}, the derived 4-preprojective dg algebra $\Pi_4 (kQ',0)$
is quasi-isomorphic to the Ginzburg dg category $\Gamma_4 (Q',0)$.

The underlying graded quiver of the algebra ${\rm
End}(\overline{T})$ is \[ \xymatrix{ Q'' & 2 \ar[rd]^{\gamma} &
\\  1 & & 3 , \ar[ll]^{\delta}}
\]
with the relation $\delta \gamma =0$, where $deg(\gamma) =
deg(\delta) =0$. Thus, the algebra ${\rm End}(\overline{T})$ is
quasi-isomorphic to the path algebra of the following graded quiver
$Q'''$ \[ \xymatrix{ & 2 \ar[ld]_{\eta} \ar[rd]^{\gamma} &
\\  1 & & 3 , \ar[ll]^{\delta}}
\] with the differential $d(\eta) =- \delta \gamma$, where $deg(\eta)
=-1$. Following proposition 6.6 in \cite{Ke09}, the derived
4-preprojective dg algebra $\Pi_4 ({\rm End}(\overline{T}),0)$ is
quasi-isomorphic to the tensor category $T_{\mathcal {R}}
(\widetilde{Q'''})$, endowed with the unique differential such that
\begin{center}
$d(\eta) ={\partial}_{{\eta}^{\ast}} W' = \delta \gamma, \,
d({\delta}^{\ast}) ={\partial}_{\delta} W' = \gamma {\eta}^{\ast},
\, d({\gamma}^{\ast}) ={\partial}_{\gamma} W' = {\eta}^{\ast} \delta
, $
\\ $ d(t_1) = \delta
{\delta}^{\ast} + \eta {\eta}^{\ast},\, d(t_2) = {\eta}^{\ast} \eta
- {\gamma}^{\ast} \gamma,\,d(t_3) = \gamma {\gamma}^{\ast} -
{\delta}^{\ast} \delta,$
\end{center}
where $W' = {\eta}^{\ast} \delta \gamma, {\mbox {and}}
~\widetilde{Q'''} = Q''' \oplus (Q''')^{\vee}[2] \oplus {\mathcal
{R}}[3]$.

It is easy to check that the tensor category $T_{\mathcal {R}}
(\widetilde{Q'''})$ endowed with the differential equals the
Ginzburg dg category $\Gamma_4 ({\mathcal {Q}},W')$, where
${\mathcal {Q}}$ is the graded quiver \[ \xymatrix{ & 2
\ar[rd]^{\gamma} &
\\ 1 \ar[ur]^{{\eta}^{\ast}} & & 3 , \ar[ll]^{\delta}}
\]
obtained from $Q'''$ by replacing $\eta$ by ${\eta}^{\ast}$, and
$W'$ is still the superpotential ${\eta}^{\ast} \delta \gamma$.
Obviously, the graded quivers $\mathcal {Q}$ and $Q$ are isomorphic,
while the superpotentials $W'$ and $W$ correspond to each other.
Hence, the derived 4-preprojective dg algebra $\Pi_4 ({\rm
End}(\overline{T}),0)$ is quasi-isomorphic to the Ginzburg dg
category $\Gamma_4 (Q,W)$.

As a consequence, the Ginzburg dg categories $\Gamma_4 (Q,W)$ and
$\Gamma_4 (Q',0)$ are Morita equivalent. Therefore, the generalized
cluster categories ${\mathcal {C}}^{(2)}_{(Q,W)}$ and ${\mathcal
{C}}^{(2)}_{(Q',0)}$ are triangle equivalent. By corollary \ref{21},
we can conclude that the generalized cluster category ${\mathcal
{C}}^{(2)}_{(Q,W)}$ and the orbit category ${\mathcal
{C}}^{(2)}_{A_3}$ are triangle equivalent.
\end{example}

\vspace{.3cm}

\section{Higher cluster categories for algebras \\ of finite global dimension}
Let $A$ be a finite-dimensional $k$-algebra of finite global
dimension. Let $n$ be a positive integer. The bounded derived
category ${\mathcal {D}}$$^b(A)$ admits a right Serre functor
$$\nu_A = - \overset{L}{\otimes}_A DA.$$ Unfortunately, the orbit
category ${\mathcal {O}}_A$ of ${\mathcal {D}}$$^b(A)$ under the
autoequivalence $\nu_A[n]$ is not triangulated in general. Let $X$
be the $A$-$A$-bimodule $DA[-n]$. Let $B$ be the trivial extension
$A \oplus X[-1]$ of $A$ with $A$ in degree 0 and $DA$ in degree
$n+1$. Clearly, the dg $B$-bimodule $DB$ is isomorphic to $B[n+1]$.
The perfect derived category per$B$ is contained in ${\mathcal
{D}}$$^b(B)$ under this construction. It is not hard to check that
for each object $X$ in per$B$ and $Y$ in ${\mathcal {D}}$$^b(B)$,
there is a functorial isomorphism $$D {\rm Hom}_{{\mathcal {D}}B}
(X,Y) \simeq {\rm Hom}_{{\mathcal {D}}B} (Y, X[n+1]).$$ Denote by $p
: B \longrightarrow A$ the canonical projection and $p_{\ast} :
{\mathcal {D}}$$^b(A) \longrightarrow {\mathcal {D}}$$^b(B)$ the
induced triangulated functor. Let $\langle A \rangle_B$ be the thick
subcategory of ${\mathcal {D}}$$^b(B)$ generated by the image of
$p_\ast$. We call the quotient category
$${\mathcal {C}}_A^{(n-1)} = {\langle A \rangle_B}/{{\rm per}B}$$ the
$(n-1)$-{\em cluster category} of $A$. In general, this category
${\mathcal {C}}_A^{(n-1)}$ has infinite-dimensional morphism spaces.
In \cite{Am08} C. Amiot dealt with the case $n \leq 2$. By $\Pi_3 A$
we denote the 3-derived preprojective algebra of $A$ as introduced
in \cite{Ke09}.

\begin{thm}[\cite{Am08}]
Let $A$ be a finite-dimensional $k$-algebra of global dimension
$\leq 2$. If the functor Tor$_2^A (-, DA)$ is nilpotent, then the
cluster category ${\mathcal {C}}_A$ is Hom-finite, 2-Calabi-Yau and
the object $A$ is a cluster tilting object. Moreover, there exists a
triangle equivalence from ${\mathcal {C}}_A$ to the generalized
cluster category ${\mathcal {C}} = {{\rm per}\Pi_3A}/{\mathcal
{D}}$$^b \Pi_3A$ sending the object $A$ onto the image of the
3-derived preprojective algebra $\Pi_3A$ in $\mathcal {C}$.
\end{thm}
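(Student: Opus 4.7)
The plan is to deduce this theorem from the main Theorem~\ref{1}, applied with $m=1$ to the 3-derived preprojective dg algebra $\Pi_3 A$. The proof will proceed in two stages: first, verify that $\Pi_3 A$ satisfies the four hypotheses $(\star)$; second, construct a triangle equivalence $\mathcal{C}_A \xrightarrow{\sim} \mathcal{C}_{\Pi_3 A} := \per\Pi_3 A / \mathcal{D}^b\Pi_3 A$ that sends $A$ to $\pi\Pi_3 A$.

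For the first stage, I would check the four properties in turn. Homological smoothness and the 3-Calabi-Yau bimodule property of $\Pi_3 A$ are consequences of Keller's theorems on derived preprojective algebras in \cite{Ke09}, and hold whenever $A$ has finite global dimension. Finite-dimensionality of $H^0\Pi_3 A$ reduces to the canonical identification $H^0\Pi_3 A \simeq A$. The vanishing $H^p\Pi_3 A = 0$ for $p > 0$ is the point where $\mbox{gl.dim}\,A \leq 2$ is used: in this range the cotangent bimodule of $A$ sits in degrees $[-2,0]$, so the tensor-algebra realization of $\Pi_3 A$ is generated in nonpositive degrees and hence has no positive homology. With $(\star)$ verified, Theorem~\ref{1} yields immediately that $\mathcal{C}_{\Pi_3 A}$ is Hom-finite and 2-Calabi-Yau, with $\pi\Pi_3 A$ a cluster tilting object whose endomorphism algebra is $A$.

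For the second stage, I would build a dg comparison between the trivial extension $B = A \oplus DA[-3]$ and $\Pi_3 A$, reflecting the fact that $B$ is a first Calabi-Yau approximation to the full Calabi-Yau completion $\Pi_3 A$. Using the nilpotency of $\mbox{Tor}_2^A(-,DA)$, one then shows that this comparison induces matching identifications $\langle A\rangle_B \simeq \per\Pi_3 A$ and $\per B \simeq \mathcal{D}^b\Pi_3 A$. Passing to the quotients produces the desired equivalence $\Phi : \mathcal{C}_A \xrightarrow{\sim} \mathcal{C}_{\Pi_3 A}$, sending $A$ to $\pi\Pi_3 A$ by construction, and the Hom-finiteness, 2-Calabi-Yau, and cluster tilting assertions about $\mathcal{C}_A$ are transported along $\Phi$ from the first stage.

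The main obstacle will be the second stage: matching thick subcategories so that $\Phi$ descends to the quotients, and checking that $\Phi$ is fully faithful and essentially surjective. The nilpotency of $\mbox{Tor}_2^A(-,DA)$ enters here in an essential way: it is exactly what forces the iterated tensor products that build $\Pi_3 A$ out of $A$ and $DA[-3]$ to converge, thereby identifying $\per\Pi_3 A$ with the thick subcategory $\langle A\rangle_B$ and ensuring that morphism spaces in $\mathcal{C}_A$ remain finite-dimensional after the quotient is taken. Without this hypothesis, the analogue of Theorem~\ref{1} would fail for $\mathcal{C}_A$, which is why the theorem is stated conditionally.
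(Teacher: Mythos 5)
Your overall strategy coincides with the one the paper follows for the general case in Section~4 (of which this theorem is the $n=2$ instance, quoted from \cite{Am08}): verify the hypotheses $(\star)$ for $\Pi_3 A$ so that Theorem~\ref{1} applies to $\per\Pi_3 A/\mathcal{D}^b\Pi_3 A$, and then transport the conclusions along a triangle equivalence obtained by comparing the trivial extension $B=A\oplus DA[-3]$ with $\Pi_3 A$ --- in the paper this comparison is the functor $\RHom_B(A_B,-)$ together with the identification of $\Pi_3 A$ with $\RHom_B(A_B,A_B)$ in the homotopy category of dg algebras, which matches $\langle A\rangle_B$ with $\per\Pi_3 A$ and $\per B$ with $\mathcal{D}^b\Pi_3 A$.

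There is, however, a genuine gap in your first stage. You assert that the finite-dimensionality of $H^0\Pi_3 A$ ``reduces to the canonical identification $H^0\Pi_3 A\simeq A$''. That identification is false in general: as computed in Section~4 of the paper, $H^0(\Pi_3 A)\simeq T_A\bigl(\Ext^2_A(DA,A)\bigr)=\widetilde{A}$, the tensor algebra over $A$ of the bimodule $\Ext^2_A(DA,A)$, which for an algebra of global dimension exactly $2$ is typically strictly larger than $A$ and may be infinite-dimensional. The hypothesis that $\Tor_2^A(-,DA)$ is nilpotent is used precisely at this point: it is equivalent (via Lemma~\ref{19}) to the nilpotency of $-\otimes_A\Ext^2_A(DA,A)$, which is exactly the condition making the tensor algebra $\widetilde{A}$, hence $H^0\Pi_3 A$, finite-dimensional, i.e.\ property~(c) of $(\star)$. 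Your proposal instead locates the entire role of the nilpotency hypothesis in the second stage (``forcing the iterated tensor products to converge'' so that $\langle A\rangle_B\simeq\per\Pi_3 A$), but that identification follows from Keller's Morita theory and the dg-algebra isomorphism $\Pi_3 A\simeq\RHom_B(A_B,A_B)$ without any nilpotency assumption. As written, your verification of $(\star)$ has a hole at property~(c) --- without the nilpotency, Theorem~\ref{1} simply does not apply --- and the downstream claim that the endomorphism algebra of the cluster tilting object is $A$ is likewise incorrect: it is $\widetilde{A}=T_A\Ext^2_A(DA,A)$ (Proposition~\ref{18}).
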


In this section, we will investigate the generalization of the above
theorem to the case where $A$ is a finite-dimensional $k$-algebra of
global dimension $\leq n$ (instead of $\leq 2$). Since the
generalization is straightforward, we only list the main steps here,
and leave the proofs to the interested reader.

Let $\mathcal{T}$ be a triangulated category and $\mathcal{N}$ a
triangulated subcategory of $\mathcal{T}$ stable under passage to
direct summands.

\begin{defn}[\cite{Am08}]
Let $X$ and $Y$ be objects in $\mathcal{T}$. A morphism $p: N
\rightarrow X$ is called a {\em local $\mathcal{N}$-cover of $X$
relative to Y} if $N$ is in $\mathcal{N}$ and if it induces an exact
sequence: $$0 \longrightarrow {\mathcal{T}} (X,Y) \longrightarrow
{\mathcal{T}} (N,Y).$$ A morphism $i:X \rightarrow N$ is called a
{\em local $\mathcal{N}$-envelope of $X$ relative to Y} if $N$ is in
$\mathcal{N}$ and if it induces an exact sequence: $$0
\longrightarrow {\mathcal{T}} (Y,X) \longrightarrow {\mathcal{T}}
(Y,N).$$
\end{defn}

\begin{lem}[\cite{Am08}]
Let $X$ and $Y$ be objects of ${\mathcal {D}}$$^b(B)$ such that the
space ${\rm Hom}_{{\mathcal {D}}B} (X,Y)$ is finite-dimensional.
Then there exists a local per$B$-cover of $X$ relative to $Y$.
\end{lem}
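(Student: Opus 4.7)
The plan is to produce $p : N \to X$ in ${\rm per} B$ such that the evaluation map $\alpha_p : {\rm Hom}_{{\mathcal {D}}B}(X,Y) \to {\rm Hom}_{{\mathcal {D}}B}(N,Y)$, $f \mapsto f \circ p$, is injective. The strategy is to approximate $X$ from below by a sufficiently large perfect complex whose cofiber lies below the cohomological range of $Y$ in the standard $t$-structure on ${\mathcal {D}}(B)$. The long exact sequence attached to the defining triangle of $p$ will then force the required injectivity.

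Concretely, I would construct a tower of perfect approximations $p_r : N_r \to X$, $r = 0, 1, 2, \ldots$, with $N_r \in {\rm per} B$ and $C_r := {\rm cone}(p_r) \in {\mathcal {D}}^{\leq -r}(B)$. The base case $r=0$ requires a map $p_0 : N_0 \to X$ whose cofiber lies in ${\mathcal {D}}^{\leq 0}$; this is obtained by finitely many cell attachments using shifts $B[-i]$ covering the cohomology of $X$ in degrees $1, \ldots, b_X$ (where $b_X$ is the top degree of $H^\ast X$), which is possible because $X$ has only finitely many nonzero cohomology groups, each of finite dimension (as $X \in {\mathcal {D}}^b(B)$). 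The inductive step from $N_r$ to $N_{r+1}$ is then carried out by attaching finitely many cells $B[r-1]$ along a map $B[r]^{a_r} \to C_r$ surjective on $H^{-r}$; a standard octahedron argument identifies the new cofiber with ${\rm cone}(B[r]^{a_r} \to C_r)$, whose top cohomology $H^{-r}$ is killed by construction, yielding $C_{r+1} \in {\mathcal {D}}^{\leq -r-1}(B)$.

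Since $Y$ is bounded, fix $a_Y$ with $Y \in {\mathcal {D}}^{\geq a_Y}(B)$. For any $r > -a_Y$, the standard $t$-structure vanishing ${\rm Hom}({\mathcal {D}}^{\leq m}, {\mathcal {D}}^{\geq n}) = 0$ for $m < n$ yields ${\rm Hom}(C_r, Y) = 0$. Applying ${\rm Hom}(-, Y)$ to the triangle $N_r \to X \to C_r \to N_r[1]$ produces the exact sequence
\[
{\rm Hom}(C_r, Y) \longrightarrow {\rm Hom}(X, Y) \longrightarrow {\rm Hom}(N_r, Y)
\]
with vanishing leftmost term, so the right-hand map is injective, and one sets $N = N_r$, $p = p_r$ for any such $r$.

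The main technical point is the construction of the tower and the verification that each cell-attachment strictly improves the connectivity of the cone; this is routine in the dg-module setting once each $H^i(X)$ is known to be finite-dimensional, which holds because $X \in {\mathcal {D}}^b(B)$. Everything else follows formally from $t$-structure vanishing. Notably, the finite-dimensionality hypothesis on ${\rm Hom}(X,Y)$ does not enter the argument explicitly; it is automatic under the standing assumptions on $B$, and is included in the statement mainly to delineate the setting in which the lemma will be applied.
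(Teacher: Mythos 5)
Your tower construction breaks down because the dg algebra $B$ in this section is \emph{not} connective: by construction $B = A \oplus DA[-n-1]$ has $H^{0}B = A$ and $H^{n+1}B = DA \neq 0$, so the free module $B$ does not lie in ${\mathcal D}^{\leq 0}$, and a cell $B[j]$ carries cohomology in degree $n+1-j$ as well as in degree $-j$. Consequently every cell attachment reintroduces cohomology $n+1$ degrees above the class it is meant to kill. Concretely, take $X = A_B$: the natural cover $B \to A$ (surjective on $H^{0}$) has cone $DA[-n]$, concentrated in degree $n \geq 1$, so already $C_1 \notin {\mathcal D}^{\leq -1}$; and in general, if $C_r \in {\mathcal D}^{\leq -r}$ and $B[r]^{a_r} \to C_r$ is surjective on $H^{-r}$, the long exact sequence gives $H^{n-r}$ of the new cone isomorphic to $(DA)^{a_r} \neq 0$, so the cone is not even in ${\mathcal D}^{\leq -r}$. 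The true picture is the opposite of the one you describe: an object of ${\mathcal D}^b(B)$ is exhausted by perfect subobjects whose cones become concentrated in arbitrarily \emph{high} positive degrees; and since ${\rm per}B$ is a proper subcategory of ${\mathcal D}^b(B)$ (i.e.\ $B$ has infinite global dimension), the vanishing of ${\rm Hom}(C,Y)$ for $C$ highly connective and $Y$ bounded is not available either. So no $t$-structure vanishing can close the argument.

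This is also where your final remark goes wrong: the finite-dimensionality of ${\rm Hom}_{{\mathcal D}B}(X,Y)$ is exactly what the proof in \cite{Am08} (to which the paper refers) uses. One chooses a cofibrant resolution $P$ of $X$ that is the union of an increasing chain $P_0 \subset P_1 \subset \cdots$ of perfect dg submodules (possible because each $H^{i}X$ is finite-dimensional). The kernels $K_i$ of ${\rm Hom}_{{\mathcal D}B}(X,Y) \to {\rm Hom}_{{\mathcal D}B}(P_i,Y)$ form a decreasing chain of subspaces of the finite-dimensional space ${\rm Hom}_{{\mathcal D}B}(X,Y)$, hence stabilize; their intersection is identified by the Milnor sequence with ${\lim}^{1}\, {\rm Hom}_{{\mathcal D}B}(P_i,Y[-1])$, which vanishes by Mittag--Leffler because each ${\rm Hom}_{{\mathcal D}B}(P_i,Y[-1])$ is finite-dimensional ($P_i$ is perfect, $Y \in {\mathcal D}^b(B)$ and $B$ is finite-dimensional). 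Hence $K_N = 0$ for some $N$, and $P_N \to X$ is the required local ${\rm per}B$-cover. Without the finite-dimensionality hypothesis one only gets $\bigcap_i K_i = 0$, not the vanishing of a single $K_N$, so the hypothesis is essential and your argument, which never uses it, cannot be repaired within its own framework.
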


Under the assumption of the above lemma, both ${\rm Hom}_{{\mathcal
{D}}B} (N,X)$ and ${\rm Hom}_{{\mathcal {D}}B} (X,N)$ are
finite-dimensional for $N$ in per$B$ and $X$ in ${\mathcal
{D}}$$^b(B)$. Therefore, there exists a local per$B$-envelope of
$X[n+1]$ relative to $Y$. Hence the bilinear form $$\beta'_{X,Y} :
{\rm Hom}_{{\mathcal{C}}_A^{(n-1)}} (X, Y) \times {\rm
Hom}_{{\mathcal{C}}_A^{(n-1)}} (Y, X[n]) \longrightarrow k \quad
{\mbox {for}} X , Y \in {{\mathcal {C}}_A^{(n-1)}}$$ constructed in
the first section of \cite{Am08} is non-degenerate. Therefore, if
${\mathcal {C}}_A^{(n-1)}$ is Hom-finite, then it is $n$-Calabi-Yau
as a triangulated category.

Let us recall the following important properties of the Serre
functor $\nu_A$ of $A$:
\begin{itemize}

\item[$\bullet$] $\nu_A ({\mathcal {D}}_{\geq 0}) \subset {\mathcal {D}}_{\geq
-n}$;

\item[$\bullet$] ${\rm Hom}_{{\mathcal {D}}A} (U,V)$ vanishes for
all $U \in {\mathcal {D}}_{\geq 0}$ and $V \in {\mathcal {D}}_{\leq
{-n-1}}$;

\item[$\bullet$] $\nu_A$ admits an inverse $$\nu_A^{-1} = - \overset{L}{\otimes}_A {\mbox{RHom}}_A
(DA,A),$$ where the homology of the complex RHom$_A (DA,A)$ is given
by
\begin{flushleft}
$\quad \quad\quad\quad H^i {\mbox{RHom}}_A (DA,A) \simeq {\rm
Hom}_{{\mathcal {D}}A} (DA, A[i])$\\ $$ \simeq \left\{
                        \begin{array}{ll}
{\rm
Hom}_{{\mathcal {D}}A} (DA, A), & i = 0, \\
{\rm Ext}^i_A (DA,A), & i = 1, \ldots, n, \\
                          \, 0,  & {\mbox{otherwise}};
                        \end{array}
\right.$$
\end{flushleft}

\item[$\bullet$] $\nu_A^{-1} ({\mathcal {D}}_{\leq 0}) \subset {\mathcal {D}}_{\leq
n}$.

\end{itemize}

Using these properties we obtain the following generalization of
proposition 4.7 of \cite{Am08}.

\begin{prop}\label{18}
Let $A$ be a finite-dimensional $k$-algebra of global dimension
$\leq n$ and $X$ the $A$-$A$-bimodule ${\rm Ext}^n_A (DA,A)$. Then
the endomorphism algebra $\widetilde{A} = {\rm End}_{{\mathcal
{C}}_A^{(n-1)}} (A)$ is isomorphic to the tensor algebra $T_A X$ of
$X$ over $A$. As a consequence, if the category ${\mathcal
{C}}_A^{(n-1)}$ is Hom-finite, then the functor $- {\otimes}_A {\rm
Ext}^n_A (DA,A)$ is nilpotent.
\end{prop}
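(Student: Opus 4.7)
I would follow Amiot's proof of her Proposition~4.7 (the $n=2$ case), generalising each step to arbitrary global dimension $n$. Set $F := \nu_A^{-1}[n]$ and $Y := F(A) = \mathrm{RHom}_A(DA,A)[n]$, so that $F^i A \simeq Y^{\otimes_A^L i}$ as $A$-$A$-bimodule complexes. The first step identifies morphisms in the cluster quotient with an orbit sum,
\[
\mathrm{Hom}_{\mathcal{C}_A^{(n-1)}}(A,A) \;\cong\; \bigoplus_{i\geq 0} \mathrm{Hom}_{\mathcal{D}^b(A)}(A, F^i A),
\]
using the preceding local $\mathrm{per}\,B$-envelope lemma applied to truncations of the canonical tower built from the bimodule triangle $X[-1] \to B \to A$ in $\mathcal{D}^b(B^e)$, together with the Serre pairing on $\mathcal{D}^b(B)$. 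Amiot's argument here invokes only Serre duality on $\mathcal{D}^b(B)$ and the formal structure of $B = A \oplus X[-1]$, so it adapts without modification.

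Next I would compute each summand using the four bulleted properties of $\nu_A$ recalled just before the proposition. These force $Y \in \mathcal{D}^{\leq 0}$ with $H^0(Y) = \mathrm{Ext}^n_A(DA,A) = X$: the bound $\nu_A^{-1}(\mathcal{D}^{\leq 0}) \subset \mathcal{D}^{\leq n}$ combined with the shift $[n]$ gives $Y \in \mathcal{D}^{\leq 0}$, and the explicit Ext computation gives the top cohomology. An induction on $i$ via the hyper-Tor spectral sequence
\[
E_2^{p,q} \;=\; \bigoplus_{a+b=q} \mathrm{Tor}^A_{-p}\bigl(H^a Y,\, H^b(Y^{\otimes_A^L(i-1)})\bigr) \;\Longrightarrow\; H^{p+q}(Y^{\otimes_A^L i})
\]
then shows $Y^{\otimes_A^L i} \in \mathcal{D}^{\leq 0}$; the diagonal terms $E_2^{-k,k}$ with $k>0$ all vanish because any nonzero input $H^a$ or $H^b$ would need to sit in a positive degree, whence the spectral sequence collapses on $p+q=0$ to yield $H^0(Y^{\otimes_A^L i}) \cong X^{\otimes_A i}$. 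Therefore $\mathrm{Hom}_{\mathcal{D}^b(A)}(A, F^i A) = H^0(F^i A) \cong X^{\otimes_A i}$, and summing produces the $k$-linear isomorphism $\widetilde A \cong T_A X$.

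A routine compatibility check shows that composition in the orbit category, $F^i(g) \circ f$ with $f \in \mathrm{Hom}(A, F^i A)$ and $g \in \mathrm{Hom}(A, F^j A)$, corresponds under $H^0$ to the concatenation map $X^{\otimes_A i} \otimes_A X^{\otimes_A j} \to X^{\otimes_A(i+j)}$, because $F$ acts on morphisms by tensoring with $Y$; this promotes the vector-space isomorphism to an algebra isomorphism $\widetilde A \cong T_A X$. For the stated consequence: if $\mathcal{C}_A^{(n-1)}$ is Hom-finite then $\widetilde A$ is finite-dimensional, so $X^{\otimes_A i}$ must vanish for $i$ sufficiently large, which is exactly the nilpotence of the endofunctor $-\otimes_A X$ on $\mathrm{mod}\,A$.

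The main obstacle is the first step, the orbit-sum decomposition of $\mathrm{Hom}_{\mathcal{C}_A^{(n-1)}}(A,A)$. The homological computation of step two is straightforward once the bound $\mathrm{gl.dim}\,A \leq n$ is exploited to kill off-diagonal terms in the Künneth spectral sequence, but the first step requires a careful filtration of $p_\ast A$ inside $\mathcal{D}^b(B)$ by perfect $B$-modules whose subquotients are (shifts of) the $F^i A$, then an inverse-limit argument using local $\mathrm{per}\,B$-envelopes. This is exactly the technical heart of Amiot's original argument and must be transcribed carefully with $n$ in place of $2$; no new ideas are needed, but bookkeeping on shifts and $t$-structure bounds has to be done in arbitrary degree.
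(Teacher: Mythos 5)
Your proposal is correct and follows exactly the route the paper intends: the paper omits the proof, stating only that the proposition is the ``straightforward generalization'' of Amiot's Proposition~4.7 obtained from the four listed properties of $\nu_A$, and your outline (orbit-sum decomposition via local $\mathrm{per}\,B$-covers/envelopes, then the identification $H^0(F^iA)\cong X^{\otimes_A i}$ from the bounds $\nu_A^{-1}(\mathcal{D}_{\leq 0})\subset\mathcal{D}_{\leq n}$ and $\mathrm{gl.dim}\,A\leq n$, with composition matching concatenation) is precisely that generalization. You correctly identify the filtration/colimit argument for $\mathrm{Hom}_{\mathcal{C}_A^{(n-1)}}(A,A)\cong\bigoplus_{i\geq 0}\mathrm{Hom}_{\mathcal{D}^b(A)}(A,F^iA)$ as the only step requiring careful transcription from the $n=2$ case.
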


In fact, the converse statement of the consequence in proposition
\ref{18} is also true. Taking advantage of the above properties of
the Serre functor $\nu_A$, we also have the following variant of
proposition 4.9 of \cite{Am08}.

\begin{prop}
Let $A$ be a finite-dimensional $k$-algebra of global dimension
$\leq n$. The following properties are equivalent:
\begin{itemize}
\item[1)] the category ${\mathcal {C}}_A^{(n-1)}$ is Hom-finite,
\item[2)] the functor $- {\otimes}_A {\rm Ext}^n_A (DA,A)$ is
nilpotent,
\item[3)] the functor Tor$^A_n (-,DA)$ is nilpotent.
\end{itemize}
\end{prop}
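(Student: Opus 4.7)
I would split the equivalence into the three implications $(1)\Rightarrow(2)$, $(2)\Leftrightarrow(3)$, and $(2)\Rightarrow(1)$, following the template of Proposition~4.9 of \cite{Am08}. The first is essentially already inside Proposition~\ref{18}; the second should be a formal consequence of the Serre-functor properties listed before Proposition~\ref{18}; and the third is the substantial step, which I plan to reduce to the main Theorem~\ref{1} via the $(n+1)$-derived preprojective algebra of \cite{Ke09}.

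\textbf{The first two directions.} For $(1)\Rightarrow(2)$, Proposition~\ref{18} already identifies $\widetilde{A}={\rm End}_{{\mathcal{C}}_A^{(n-1)}}(A)$ with the tensor algebra $T_A X$, where $X={\rm Ext}^n_A(DA,A)$. Hom-finiteness of ${\mathcal{C}}_A^{(n-1)}$ forces $\widetilde{A}$ to be finite-dimensional, so the graded pieces $X^{\otimes_A k}$ must vanish for $k$ large, which is exactly the nilpotency of $-\otimes_A X$. For $(2)\Leftrightarrow(3)$, I would use the fact that under the global-dimension hypothesis $\nu_A=-\overset{L}{\otimes}_A DA$ is an autoequivalence of ${\mathcal{D}}^b(A)$, with $\nu_A[n]$ sending modules into ${\mathcal{D}}^{[0,n]}$ (so that $H^0(\nu_A[n]M)={\rm Tor}^A_n(M,DA)$) and $\nu_A^{-1}[-n]$ sending modules into ${\mathcal{D}}^{[-n,0]}$ (so that, for $M$ projective, $H^0(\nu_A^{-1}[-n]M)=M\otimes_A X$). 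A truncation / spectral-sequence argument using the two amplitude bullet points on $\nu_A$ and $\nu_A^{-1}$ recalled before Proposition~\ref{18} should then show that iterates of the two extremal-cohomology functors exhaust the heart at the same rate, giving the equivalence of the two nilpotency conditions.

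\textbf{The main implication $(2)\Rightarrow(1)$.} Here I would adapt Amiot's $n\le 2$ argument by identifying ${\mathcal{C}}_A^{(n-1)}$ with the generalized $(n-1)$-cluster category $\mathrm{per}\,\Pi_{n+1}A/{\mathcal{D}}^b\Pi_{n+1}A$ associated to the $(n+1)$-derived preprojective algebra $\Pi_{n+1}A$ of \cite{Ke09}. Under assumption~(2), the dg algebra $\Pi_{n+1}A$ should satisfy the four properties~$(\star)$ of Section~2: homological smoothness and the $(n+1)$-Calabi-Yau property come from Keller's general results (compare Theorem~\ref{14}), vanishing of positive homology is built into the construction, and finite-dimensionality of $H^0\Pi_{n+1}A$ is exactly the content of~(2), since Keller's explicit description yields $H^0\Pi_{n+1}A\cong T_A X$ with $X={\rm Ext}^n_A(DA,A)$. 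Theorem~\ref{1} applied to $\Pi_{n+1}A$ then produces Hom-finiteness of $\mathrm{per}\,\Pi_{n+1}A/{\mathcal{D}}^b\Pi_{n+1}A$, which transports back to~(1) via the equivalence. The main obstacle will be the construction and careful verification of the triangle equivalence ${\mathcal{C}}_A^{(n-1)}\simeq \mathrm{per}\,\Pi_{n+1}A/{\mathcal{D}}^b\Pi_{n+1}A$ in the general-$n$ setting; this requires a higher-dimensional analogue of Amiot's recollement and fundamental-domain arguments, together with a check that the image of $\Pi_{n+1}A$ corresponds to the object $A\in{\mathcal{C}}_A^{(n-1)}$ under the expected equivalence.
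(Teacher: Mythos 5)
The paper offers no written proof of this proposition: it is stated as a variant of Proposition~4.9 of \cite{Am08}, with the phrase ``taking advantage of the above properties of the Serre functor'' signalling that the intended argument is a direct transcription of Amiot's, carried out inside ${\mathcal{D}}^b(B)$ using the four amplitude properties of $\nu_A$ and $\nu_A^{-1}$ listed just before Proposition~\ref{18}. Your treatment of $(1)\Rightarrow(2)$ (finite-dimensionality of $T_AX$ forces a graded piece $X^{\otimes_A k}$ to vanish) and of $(2)\Leftrightarrow(3)$ (the mutually inverse autoequivalences $\nu_A$ and $\nu_A^{-1}$ exchange the top ${\rm Tor}$ with the top ${\rm Ext}$; watch your shift conventions, since $H^0(\nu_AM[-n])={\rm Tor}^A_n(M,DA)$ rather than $H^0(\nu_AM[n])$) matches this intended route. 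Where you genuinely diverge is $(2)\Rightarrow(1)$: you reroute through the triangle equivalence ${\mathcal{C}}_A^{(n-1)}\simeq{\rm per}\,\Pi_{n+1}(A)/{\mathcal{D}}^b\Pi_{n+1}(A)$ and the main Theorem~\ref{1}, whereas the direct generalization of Amiot's argument stays in ${\mathcal{D}}^b(B)$: since $\langle A\rangle_B$ is generated by $A$, Hom-finiteness reduces to the spaces ${\rm Hom}_{{\mathcal{C}}_A^{(n-1)}}(A,A[i])$, which are computed via local ${\rm per}B$-covers and the amplitude bounds on $\nu_A^{\pm1}$, with nilpotency guaranteeing that only finitely many ``orbit'' terms contribute. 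Your route is viable --- it is essentially how the paper later proves Theorem~\ref{16}, whose logical inputs (Corollary~\ref{15} and Lemma~\ref{19}) do not depend on this proposition, so there is no circularity --- but it front-loads the hardest part of the entire section, the construction of the triangle equivalence, which you explicitly leave as ``the main obstacle'' and which occupies the remainder of the paper's Section~4. What the direct approach buys is a self-contained, comparatively elementary proof of the proposition; what your approach buys is economy later, since once the equivalence is in place you get $n$-Calabi-Yau-ness and the cluster-tilting object for free along with Hom-finiteness. As a standalone proof of this proposition, however, your $(2)\Rightarrow(1)$ is not yet complete until that equivalence is actually established.
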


Now we give a complete proof for the following well-known lemma.

\begin{lem}\label{22}
Let $A$ be a dg k-algebra. Then for all dg $A$-modules $L, M$, the
objects ${\rm RHom}_A (L,M)$ and ${\rm RHom}_{A^e} (A, {\rm Hom}_k
(L,M))$ are isomorphic in the derived category of dg
$A$-$A$-bimodules.
\end{lem}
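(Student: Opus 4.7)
The plan is to reduce both sides to a common Hochschild-type complex by means of canonical bar resolutions. As a preliminary step I would record the underived identity
\[
\mathrm{Hom}_A(L,M) \;\cong\; \mathrm{Hom}_{A^e}\bigl(A,\mathrm{Hom}_k(L,M)\bigr),
\]
where $\mathrm{Hom}_k(L,M)$ is given the bimodule structure $(a\!\cdot\!f\!\cdot\!b)(l)=f(la)\,b$. The map sending $f$ to the bimodule morphism $a\mapsto\bigl(l\mapsto f(la)\bigr)$ is an isomorphism of dg $k$-modules, with inverse $\varphi\mapsto\varphi(1)$: its bijectivity is precisely the translation between right $A$-linearity of $f$ and two-sided bimodule compatibility of $\varphi$, and one checks that both maps intertwine the internal differentials.

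For the derived version I would introduce the reduced bar resolution $B(A)$ of $A$ as a dg $A^e$-module, with degree $-n$ term $A\otimes_k \bar A^{\otimes n}\otimes_k A$, together with the analogous bar resolution $B(L)$ of $L$ as a right dg $A$-module, with degree $-n$ term $L\otimes_k \bar A^{\otimes n}\otimes_k A$. Both are semi-free in their respective module categories. Applying tensor--hom adjunction over $k$, one obtains identifications
\[
\mathrm{Hom}_{A^e}\bigl(B(A),\mathrm{Hom}_k(L,M)\bigr) \;=\; \mathrm{Hom}_k\bigl(L\otimes_k \bar A^{\otimes \bullet},M\bigr) \;=\; \mathrm{Hom}_A\bigl(B(L),M\bigr),
\]
and direct inspection shows the Hochschild-type differentials on the two sides coincide. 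Hence $\mathrm{RHom}_{A^e}(A,\mathrm{Hom}_k(L,M))$ and $\mathrm{RHom}_A(L,M)$ are represented by the same complex of $k$-modules.

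The main obstacle is upgrading this equality from $\mathcal{D}(k)$ to the derived category of dg $A$-$A$-bimodules asked for in the statement: one must exhibit compatible bimodule actions on both sides and verify that the bar-level identification respects them. A cleaner alternative, which manifestly produces a bimodule isomorphism, is to pick a semi-free resolution $P\to L$ of $L$ over $A$, note that since $k$ is a field $\mathrm{Hom}_k(-,M)$ preserves quasi-isomorphisms (so $\mathrm{Hom}_k(P,M)\to \mathrm{Hom}_k(L,M)$ is a quasi-isomorphism of dg bimodules), and argue that $\mathrm{Hom}_k(P,M)$ is semi-injective as an $A^e$-module---this reduces to the model case $P=A$, where $\mathrm{Hom}_k(A,M)$ is the co-induced bimodule attached to the right module $M$. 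Applying the underived identity to $P$ in place of $L$ then delivers the required isomorphism in $\mathcal{D}(A^e)$.
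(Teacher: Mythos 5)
Your ``cleaner alternative'' is essentially the paper's own argument (chain-level adjunction $\mathrm{Hom}_A(L\otimes_A N,M)\cong\mathrm{Hom}_{A^e}(N,\mathrm{Hom}_k(L,M))$ specialized to $N=A$, applied to a cofibrant resolution of $L$, plus a fibrancy claim for the resulting Hom-bimodule), but it has a genuine gap at the fibrancy step. Your reduction of the semi-injectivity of $\mathrm{Hom}_k(P,M)$ over $A^e$ to the model case $P=A$ is fine, but the model case itself fails for arbitrary $M$: the adjunction gives
$$\mathrm{Hom}_{A^e}\bigl(U,\mathrm{Hom}_k(A,M)\bigr)\;\cong\;\mathrm{Hom}_A(U|_A,M),$$
where $U|_A$ is $U$ with only its right $A$-structure retained, so $\mathrm{Hom}_k(A,M)$ is K-injective over $A^e$ precisely when $\mathrm{Hom}_A(-,M)$ kills acyclic complexes of this form, i.e.\ essentially when $M$ is itself K-injective over $A$. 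Coinduction preserves fibrancy only when fed a fibrant input. Concretely, for $A=k[x]/(x^2)$ concentrated in degree $0$ and $M=k$, the acyclic bimodule $U=(\cdots\xrightarrow{1\otimes x}A^e\xrightarrow{1\otimes x}A^e\to\cdots)$ restricts to a sum of copies of $(\cdots\xrightarrow{x}A\xrightarrow{x}A\to\cdots)$, and $\mathrm{Hom}_A(U|_A,k)$ has zero differential and is not acyclic; so $\mathrm{Hom}_k(A,k)$ is not semi-injective over $A^e$. Hence, as written, $\mathrm{Hom}_{A^e}(A,\mathrm{Hom}_k(P,M))$ is not known to compute $\mathrm{RHom}_{A^e}(A,\mathrm{Hom}_k(L,M))$.

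The repair is exactly the extra step the paper takes: replace $M$ by a fibrant resolution $\mathbf{i}M$ as well. This is harmless on both sides (since $P$ is cofibrant, $\mathrm{Hom}_A(P,M)\to\mathrm{Hom}_A(P,\mathbf{i}M)$ is a quasi-isomorphism, and over a field $\mathrm{Hom}_k(P,M)\to\mathrm{Hom}_k(P,\mathbf{i}M)$ is one too), and then the fibrancy of $\mathrm{Hom}_k(P,\mathbf{i}M)$ over $A^e$ follows by the lifting argument: for $\iota\colon U\to V$ a componentwise-injective quasi-isomorphism of bimodules, the adjunction converts the required surjectivity into surjectivity of $\mathrm{Hom}_{A}(P\otimes_A V,\mathbf{i}M)\to\mathrm{Hom}_{A}(P\otimes_A U,\mathbf{i}M)$, which holds because $P\otimes_A\iota$ is again a componentwise-injective quasi-isomorphism ($P$ cofibrant) and $\mathbf{i}M$ is fibrant --- this is where the fibrancy of the target is genuinely used. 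Two smaller points: your chain-level formulas need Koszul signs (the paper's $\Phi(f)(n)(l)=(-1)^{|l||n|}f(l\otimes n)$) to commute with the differentials; and your bar-complex computation, while correct, only identifies the two objects in ${\mathcal D}(k)$, as you yourself note, so it cannot serve as the main argument for the bimodule statement.
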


\begin{proof}
Let $N$ be an $A$-$A$-bimodule. We construct two maps $\Phi$ and
$\Psi$ as follows
\begin{center}
$\Phi: {\rm Hom}_A (L \otimes_A N,M) \rightarrow {\rm Hom}_{A^e} (N,
{\rm Hom}_k (L,M))$ \\ $\quad \quad \quad \quad \quad \quad \quad
\quad f \longmapsto (\Phi(f) (n) : l \mapsto (-1)^{|l||n|}f(l
\otimes n)),$\\
$\Psi: {\rm Hom}_{A^e} (N, {\rm Hom}_k (L,M)) \rightarrow {\rm Hom}_A (L \otimes_A N,M)$ \\
$\quad \quad \quad \quad \quad \quad \quad \quad \quad\quad g
\longmapsto \Psi(g)(l \otimes n) = (-1)^{|l||n|}g(n)(l).$
\end{center}

\noindent It is not hard to check that $\Phi$ and $\Psi$ are
$A$-$A$-bihomomorphisms homogeneous of degree 0 and satisfy
$$\Phi \Psi = {\mathbf{1}}, \quad \Psi \Phi = {\mathbf{1}}.$$
Moreover, the morphisms $\Phi$ and $\Psi$ commute with the
differentials. Thus, they induce inverse isomorphisms
$${\rm Hom}_{{\mathcal{C}}(A)} (L \otimes_A N,M) \simeq {\rm
Hom}_{{\mathcal{C}}(A^e)} (N, {\rm Hom}_k (L,M)),$$ where
${\mathcal{C}}(B)$ denotes the category of dg $B$-modules for a dg
algebra $B$. The morphisms $\Phi$ and $\Psi$ also induce inverse
isomorphisms
$${\rm Hom}_{{\mathcal{H}}(A)} (L \otimes_A N,M) \simeq {\rm
Hom}_{{\mathcal{H}}(A^e)} (N, {\rm Hom}_k (L,M)),$$ where
${\mathcal{H}}(B)$ denotes the category up to homotopy of dg
$B$-modules for a dg algebra $B$. If we specialize $N$ to $A$, then
we have
$${\rm Hom}_{{\mathcal{H}}(A)} ({\mathbf{p}}L, {\mathbf{i}}M) \simeq {\rm
Hom}_{{\mathcal{H}}(A^e)} (A, {\rm Hom}_k ({\mathbf{p}}L,
{\mathbf{i}}M)),$$ where ${\mathbf{p}}L$ is a cofibrant resolution
of $L$, and ${\mathbf{i}}M$ is a fibrant resolution of $M$.

Now we show that the complex ${\rm Hom}_k ({\mathbf{p}}L,
{\mathbf{i}}M))$ is a fibrant resolution of ${\rm Hom}_k (L,M)$ in
${\mathcal {C}}(A^e)$. Let $\iota: U \rightarrow V$ be a
quasi-isomorphism in ${\mathcal {C}}(A^e)$ which is injective in
each component. We have the isomorphisms
\begin{center}
${\rm Hom}_{{\mathcal {C}}(A^e)} (U, {\rm Hom}_k ({\mathbf{p}}L,
{\mathbf{i}}M)))
\simeq {\rm Hom}_{{\mathcal {C}}(A)} ({\mathbf{p}}L \otimes_A U, {\mathbf{i}}M)$,\\
${\rm Hom}_{{\mathcal {C}}(A^e)} (V, {\rm Hom}_k ({\mathbf{p}}L,
{\mathbf{i}}M))) \simeq {\rm Hom}_{{\mathcal {C}}(A)}
({\mathbf{p}}L \otimes_A V, {\mathbf{i}}M)$.
\end{center}
Since ${\mathbf{p}}L$ is cofibrant, the morphism ${\mathbf{p}}L
\otimes \iota:~ {\mathbf{p}}L \otimes U \rightarrow {\mathbf{p}}L
\otimes V$ is a quasi-isomorphism in ${\mathcal {C}}(A)$ which is
injective in each component. Since ${\mathbf{i}}M$ is fibrant, it
follows that the morphism
$${\rm Hom}_{{\mathcal {C}}(A)} ({\mathbf{p}}L \otimes_A V, {\mathbf{i}}M) \rightarrow
{\rm Hom}_{{\mathcal {C}}(A)} ({\mathbf{p}}L \otimes_A U,
{\mathbf{i}}M)$$ is surjective. Thus, the complex ${\rm Hom}_k
({\mathbf{p}}L, {\mathbf{i}}M)$ is fibrant. Therefore, we have the
following isomorphisms in the derived category of dg
$A$-$A$-bimodules
$${\rm RHom}_A (L,M) \simeq {\rm Hom}_{{\mathcal{H}}(A)} ({\mathbf{p}}L, {\mathbf{i}}M) \simeq {\rm
Hom}_{{\mathcal{H}}(A^e)} (A, {\rm Hom}_k ({\mathbf{p}}L,
{\mathbf{i}}M))$$ $$ \simeq {\rm RHom}_{A^e} (A, {\rm Hom}_k
(L,M)).\quad \quad \,\, \quad$$
\end{proof}

\begin{lem}\label{19}
Assume that $A$ is a proper (i.e. ${\rm dim}_k H^{\ast} A < \infty$)
dg algebra. Then the objects ${\rm RHom}_A (DA,A)$ and ${\rm
RHom}_{A^e} (A,A^e)$ are isomorphic in the derived category of dg
$A$-$A$-bimodules.
\end{lem}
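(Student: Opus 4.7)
The plan is to deduce the result from Lemma~\ref{22} by rewriting the inner Hom $\text{Hom}_k(DA,A)$ as $A^e$ in $\mathcal{D}(A^e)$, using the properness of $A$ in an essential way.

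First I would apply Lemma~\ref{22} with $L = DA$ and $M = A$. This gives a canonical isomorphism
$$\text{RHom}_A(DA, A) \simeq \text{RHom}_{A^e}\bigl(A,\, \text{Hom}_k(\mathbf{p}(DA), \mathbf{i}A)\bigr)$$
in the derived category of dg $A$-$A$-bimodules, where $\mathbf{p}(DA)$ is a cofibrant resolution and $\mathbf{i}A$ is a fibrant resolution, in the spirit of the proof of Lemma~\ref{22}. This reduces the problem to identifying the inner Hom with $A^e$ in $\mathcal{D}(A^e)$.

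Next, I would construct the canonical $A^e$-linear morphism
$$\alpha : A \otimes_k A \longrightarrow \text{Hom}_k(DA, A), \qquad a \otimes b \longmapsto \bigl(\phi \mapsto (-1)^{|a||\phi|}\phi(a)\,b\bigr),$$
where the source carries its outer bimodule structure, and show that it is a quasi-isomorphism. Quasi-isomorphism is a statement about the underlying dg $k$-modules: since $A$ is proper, the dg $k$-module underlying $A$ is perfect, hence quasi-isomorphic to a bounded complex $V$ of finite-dimensional $k$-vector spaces. For such $V$, the analogous map $V \otimes_k A \to \text{Hom}_k(DV, A)$ is already an isomorphism of dg $k$-modules (it reduces to the classical identification $V \otimes W \cong \text{Hom}_k(V^{*}, W)$ in each degree), and a standard comparison argument using truncations of $A$ transports this conclusion back to $\alpha$. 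Combining the two steps then yields $\text{RHom}_A(DA, A) \simeq \text{RHom}_{A^e}(A, A^e)$ in $\mathcal{D}(A^e)$, as desired.

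The main obstacle is the bookkeeping required in the second step: the quasi-isomorphism $\alpha$ must be promoted to a genuine equivalence in $\mathcal{D}(A^e)$, so one has to verify that every replacement of $A$ by a bounded perfect complex of $k$-modules is carried out compatibly with the dg bimodule action, and that the resulting $\text{Hom}_k$ object still coincides with the one produced by the cofibrant/fibrant replacements of Lemma~\ref{22}. The cleanest way to handle this is to observe that $\alpha$ is $A^e$-linear \emph{by construction}, with the $A^e$-action on $\text{Hom}_k(DA,A)$ only involving the outer variables, so it is enough to verify that $\alpha$ is a quasi-isomorphism of the underlying $k$-complexes, where properness applies directly.
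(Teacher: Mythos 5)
Your proposal is correct and follows essentially the same route as the paper: apply Lemma \ref{22} with $L = DA$, $M = A$, then use properness to identify $\mathrm{Hom}_k(DA,A)$ with $A^e$ in the derived category of bimodules. Your explicit evaluation map $\alpha$ is just the composite of the paper's two quasi-isomorphisms $A^e \simeq A^{op}\otimes_k D(DA) \simeq \mathrm{Hom}_k(DA,A)$, and your check that $A^e$-linearity holds by construction (so that the quasi-isomorphism property can be tested on underlying $k$-complexes) is the right way to close the argument.
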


\begin{proof}
If we particularly choose $L$ as $DA$ and $M$ as $A$ in lemma
\ref{22}, then we have the isomorphism in the derived category of dg
$A$-$A$-bimodules
$${\rm RHom}_A (DA,A) \simeq {\rm RHom}_{A^e} (A, {\rm Hom}_k
(DA,A)).$$

Since $A$ is proper, the object $A^e ~(=A^{op} {\otimes}_k A)$ is
quasi-isomorphic to $A^{op} {\otimes}_k D(DA)$ and $DA$ is perfect
over $k$. Therefore, we have the quasi-isomorphisms $$A^e \simeq
A^{op} {\otimes}_k D(DA) \simeq {\rm Hom}_k (DA,A).$$

As a result, we have the following isomorphisms in the derived
category of dg $A$-$A$-bimodules
$${\rm RHom}_{A^e} (A,A^e) \simeq {\rm RHom}_{A^e} (A, {\rm Hom}_k
(DA,A)) \simeq {\rm RHom}_A (DA,A).$$
\end{proof}

Let $\Theta$ be a cofibrant resolution of the dg $A$-bimodule ${\rm
RHom}_A (DA,A)$. Therefore, following \cite{Ke09}, the derived
$(n+1)$-preprojective algebra is defined as $$\Pi_{n+1} (A) = T_A
(\Theta [n]).$$ It is homologically smooth and $(n+1)$-Calabi-Yau as
a bimodule. Moreover, the complex ${\rm RHom}_A (DA,A)[n]$ has its
homology concentrated in nonpositive degrees $-n, \ldots, -1, 0$,
and $$H^0 (\Theta [n]) \simeq H^0 ({\rm RHom}_A (DA,A)[n]) \simeq
H^n ({\rm RHom_A (DA,A)}) \simeq {\rm Ext}^n_A (DA,A).$$ Thus, the
homology of the dg algebra $\Pi_{n+1} (A)$ vanishes in positive
degrees, and we have the following isomorphisms
$$H^0 (\Pi_{n+1} (A)) \simeq T_A (H^0 (\Theta [n])) \simeq T_A ({\rm
Ext}^n_A (DA,A)) \simeq \widetilde{A}.$$

In order for the derived $(n+1)$-preprojective algebra to satisfy
the four properties in section 2, we assume that $H^0 (\Pi_{n+1}
(A))$ is finite-dimensional.

\begin{cor}\label{15}
Let $A$ be a finite-dimensional $k$-algebra of global dimension
$\leq n$. If the functor ${\rm Tor}_n^A (-,DA)$ is nilpotent, then
the generalized $(n-1)$-cluster category $\mathcal {C} = {\rm per}$
${\Pi_{n+1}} (A)/{\mathcal {D}}$$^b \Pi_{n+1} (A)$ is Hom-finite,
$n$-Calabi-Yau and the image of the free dg module $\Pi_{n+1} (A)$
is an $(n-1)$-cluster tilting object in $\mathcal {C}$.
\end{cor}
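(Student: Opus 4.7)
The plan is to recognize this corollary as a direct application of the main Theorem \ref{1} to the dg algebra $\Pi_{n+1}(A)$, with the integer $m$ of Theorem \ref{1} taken to be $n-1$ (so that $m+2 = n+1$). The entire task therefore reduces to verifying that $\Pi_{n+1}(A)$ satisfies the four properties $(\star)$, after which the conclusion (Hom-finiteness, $n$-Calabi-Yau property, and existence of an $(n-1)$-cluster tilting object) is immediate from parts (1), (2) and (3) of Theorem \ref{1}.

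First, I would invoke the results cited from \cite{Ke09}: the derived $(n+1)$-preprojective algebra $\Pi_{n+1}(A) = T_A(\Theta[n])$ is homologically smooth and $(n+1)$-Calabi-Yau as a bimodule. This gives properties (a) and (d) at once. For property (b), I would use the computation already recorded in the paragraph preceding the corollary: since $A$ has global dimension at most $n$, the complex $\mathrm{RHom}_A(DA,A)$ has homology concentrated in degrees $0, 1, \ldots, n$, hence $\Theta[n]$ has homology concentrated in degrees $-n, \ldots, -1, 0$. Because $\Pi_{n+1}(A)$ is built as the tensor algebra over $A$ of $\Theta[n]$, its homology is also concentrated in nonpositive degrees, giving $H^p \Pi_{n+1}(A) = 0$ for all $p > 0$.

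The only nontrivial step is property (c), namely that $H^0 \Pi_{n+1}(A)$ be finite-dimensional. Here I would use the isomorphism already displayed in the text,
\[
H^0(\Pi_{n+1}(A)) \simeq T_A\bigl(H^0(\Theta[n])\bigr) \simeq T_A\bigl(\mathrm{Ext}^n_A(DA,A)\bigr) \simeq \widetilde{A},
\]
together with the proposition that lists the equivalence among (1) Hom-finiteness of $\mathcal{C}_A^{(n-1)}$, (2) nilpotency of the functor $-\otimes_A \mathrm{Ext}^n_A(DA,A)$, and (3) nilpotency of $\mathrm{Tor}_n^A(-,DA)$. The hypothesis of the corollary is (3); by the equivalence this gives (2). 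Since $A$ is finite-dimensional and $\mathrm{Ext}^n_A(DA,A)$ is a finite-dimensional $A$-bimodule, the nilpotency of $-\otimes_A \mathrm{Ext}^n_A(DA,A)$ forces the tensor algebra $T_A(\mathrm{Ext}^n_A(DA,A))$ to be finite-dimensional, which is exactly what property (c) requires.

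With all four properties of $(\star)$ established for $\Pi_{n+1}(A)$ with the choice $m = n-1$, Theorem \ref{1} applies verbatim: the quotient $\mathcal{C} = \mathrm{per}\,\Pi_{n+1}(A)/\mathcal{D}^b\,\Pi_{n+1}(A)$ is Hom-finite and $n$-Calabi-Yau, and the image of the free dg module $\Pi_{n+1}(A)$ is an $(n-1)$-cluster tilting object whose endomorphism algebra is $H^0 \Pi_{n+1}(A) \simeq \widetilde{A}$. The main obstacle, modest as it is, is bookkeeping around property (c): one has to cite the right equivalence from the earlier proposition and observe that nilpotency of the endofunctor on finite-dimensional $A$-modules translates into finite-dimensionality of the tensor algebra; the Calabi-Yau dimension shift $m+2 = n+1$ must also be tracked carefully so that the output ``$(m+1)$-Calabi-Yau'' of Theorem \ref{1} reads as ``$n$-Calabi-Yau'' here.
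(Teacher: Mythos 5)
Your proposal is correct and follows essentially the same route as the paper: both reduce the corollary to Theorem \ref{1} applied to $\Pi_{n+1}(A)$ with $m=n-1$, with the only substantive point being property (c), obtained by passing from nilpotency of ${\rm Tor}_n^A(-,DA)$ to nilpotency of $-\otimes_A{\rm Ext}^n_A(DA,A)$ and hence to finite-dimensionality of $H^0\Pi_{n+1}(A)\simeq T_A({\rm Ext}^n_A(DA,A))$. Your write-up merely spells out a few steps (the verification of (a), (b), (d) and the finiteness of the tensor algebra) that the paper leaves to the surrounding discussion.
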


\begin{proof}
If the functor ${\rm Tor}_n^A (-,DA)$ is nilpotent, by lemma
\ref{19}, the functor $- {\otimes}_A {\rm Ext}^n_A (DA,A)$ is
nilpotent. Thus, the zeroth homology of $\Pi_{n+1} (A)$ is
finite-dimensional. Now we apply theorem \ref{1} in particular to
the derived $(n+1)$-preprojective algebra $\Pi_{n+1} (A)$, then this
corollary holds.
\end{proof}

\begin{thm}\label{16}
Let $A$ be a finite-dimensional $k$-algebra of global dimension
$\leq n$. If the functor ${\rm Tor}_n^A (-,DA)$ is nilpotent, then
the $(n-1)$-cluster category ${\mathcal {C}}_A^{(n-1)}$ of $A$ is
Hom-finite, $n$-Calabi-Yau and the image of $A_B$ is an
$(n-1)$-cluster tilting object in ${\mathcal {C}}_A^{(n-1)}$.
\end{thm}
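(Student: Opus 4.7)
The plan is to deduce Theorem \ref{16} from Corollary \ref{15} by constructing a triangle equivalence between ${\mathcal{C}}_A^{(n-1)} = \langle A \rangle_B / \mathrm{per}\,B$ and the generalized $(n-1)$-cluster category $\mathcal{C} = \mathrm{per}\,\Pi_{n+1}(A)/\mathcal{D}^b\Pi_{n+1}(A)$, sending the class of $A_B$ to the class of $\Pi_{n+1}(A)$. By Corollary \ref{15} the target $\mathcal{C}$ is already known to be Hom-finite, $n$-Calabi-Yau, and to carry the image of $\Pi_{n+1}(A)$ as an $(n-1)$-cluster tilting object, so transporting these properties through the equivalence will give the three assertions of the theorem at once.

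First I would set up the comparison. The orbit category $\mathcal{O}_A$ of $\mathcal{D}^b(A)$ under $\nu_A[n]$ embeds into $\langle A\rangle_B/\mathrm{per}\,B$ and also, via the bimodule structure underlying the Ginzburg-style construction, into $\mathcal{C}$. Following Amiot's Section 4 in the global-dimension-$\leq 2$ case, the key tool is Keller's notion of the triangulated hull of an algebraic orbit category: ${\mathcal{C}}_A^{(n-1)}$ is by construction such a hull (the bimodule $X = DA[-n]$ is used to build $B$ precisely so that modules over $B$ encode the $\nu_A[n]$-orbit structure), and Keller's theorem identifies this hull with the quotient $\mathrm{per}\,\Pi_{n+1}(A)/\mathcal{D}^b\Pi_{n+1}(A)$, because $\Pi_{n+1}(A) = T_A(\Theta[n])$ with $\Theta$ a cofibrant resolution of $\mathrm{RHom}_A(DA,A)$, which by Lemma \ref{19} plays the role of the Calabi-Yau completion of $A$ in degree $n+1$. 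So the strategy is: produce the canonical comparison functor $\mathcal{C}_A^{(n-1)} \longrightarrow \mathcal{C}$, verify that it sends $A_B$ to $\pi\Pi_{n+1}(A)$, and then show it is a triangle equivalence.

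To verify the equivalence, I would follow the same two-step argument as Amiot. For fully faithfulness, one compares $\mathrm{Hom}$ spaces: on the ${\mathcal{C}}_A^{(n-1)}$ side, the non-degenerate bilinear form constructed in Section~1 of \cite{Am08} and recalled in the excerpt gives a Serre duality with shift $n$, while on the $\mathcal{C}$-side Proposition \ref{11} gives the same Serre duality intrinsically; matching these on the generator $A_B \leftrightarrow \pi\Pi_{n+1}(A)$ and using that the endomorphism algebra of the former is $\widetilde{A} = T_A\,\mathrm{Ext}^n_A(DA,A)$ by Proposition \ref{18} and of the latter is $H^0\Pi_{n+1}(A) \simeq \widetilde A$ by the last computation before Corollary \ref{15} reduces the question to an isomorphism of algebras on a cluster-tilting object, whence fully-faithfulness extends by triangulated devissage. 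For essential surjectivity, one uses that both categories are $(n-1)$-cluster tilted by their distinguished objects, so every object is obtained from $A_B$ (resp. $\pi\Pi_{n+1}(A)$) by at most $n-1$ successive extensions and direct summands.

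The main obstacle will be the comparison of the two triangulated structures; concretely, it is showing that the trivial-extension description $B = A \oplus DA[-n-1]$ (in degree $0$ and degree $n+1$) and the derived preprojective description $\Pi_{n+1}(A) = T_A(\Theta[n])$ really yield the same triangulated hull. This is where Lemma \ref{19} is indispensable: it identifies $\mathrm{RHom}_A(DA,A)$ with $\mathrm{RHom}_{A^e}(A,A^e)$, so that the Calabi-Yau-completion description of $\Pi_{n+1}(A)$ matches the shifted-bimodule appearing in $B$. Once this dg-level identification is in hand, Keller's uniqueness of the triangulated hull gives the desired equivalence and the theorem follows. As with Section~4 of \cite{Am08}, the remaining verifications are routine checks that we omit, in line with the author's stated policy of listing only the main steps.
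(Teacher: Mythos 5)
Your global strategy coincides with the paper's: deduce the theorem from Corollary \ref{15} by exhibiting a triangle equivalence ${\mathcal C}_A^{(n-1)}\simeq {\mathcal C}={\rm per}\,\Pi_{n+1}(A)/{\mathcal D}^b\Pi_{n+1}(A)$ matching $A_B$ with the image of $\Pi_{n+1}(A)$. But the way you propose to establish the equivalence has a genuine circularity. Your essential-surjectivity step invokes the fact that ``both categories are $(n-1)$-cluster tilted by their distinguished objects,'' and your fully-faithfulness step uses the Serre duality (non-degenerate bilinear form) on the ${\mathcal C}_A^{(n-1)}$ side. On that side, however, neither the cluster-tilting property of $A_B$ nor the Calabi--Yau property is available: they are exactly the conclusions of Theorem \ref{16}, and the paper's own discussion makes the Calabi--Yau property of ${\mathcal C}_A^{(n-1)}$ conditional on Hom-finiteness, which is also part of the conclusion. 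A ``fully faithful on the generator plus devissage'' argument on a Verdier quotient would in any case require the comparison functor to already be constructed and compatible with the two quotients, which you assert but do not produce; the appeal to ``uniqueness of the triangulated hull'' likewise presupposes identifying both quotients with the hull of the same orbit category, a verification that is not carried out and is not obviously easier than the direct route.

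What is missing is the actual mechanism the paper uses, which avoids all reference to properties of ${\mathcal C}_A^{(n-1)}$: apply the functor ${\rm RHom}_B(A_B,-)$, which by Keller's Morita-type theorem from \cite{Ke94} induces a triangle equivalence $\langle A\rangle_B\simeq{\rm per}\,C$ with $C={\rm RHom}_B(A_B,A_B)$; prove that $C$ and $\Pi_{n+1}(A)$ are isomorphic in the homotopy category of dg algebras (the analogue of Lemma~4.13 of \cite{Am08}, resting on Lemma \ref{19}); observe that this equivalence sends $A_B$ to $\Pi_{n+1}(A)$ and $B$ to $A_{\Pi_{n+1}(A)}$, hence restricts to an equivalence ${\rm per}\,B\simeq\langle A\rangle_{\Pi_{n+1}(A)}$; and finally identify $\langle A\rangle_{\Pi_{n+1}(A)}$ with ${\mathcal D}^b\Pi_{n+1}(A)$ (the analogue of Lemma~4.15 of \cite{Am08}). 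Only then does one pass to the quotients and obtain the equivalence ${\mathcal C}_A^{(n-1)}\simeq{\mathcal C}$, after which Corollary \ref{15} transports all three assertions. Your write-up correctly identifies Lemma \ref{19} as the point where the trivial-extension and Calabi--Yau-completion descriptions are reconciled, but without the ${\rm RHom}_B(A_B,-)$ construction the equivalence itself is not established.
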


\begin{proof}
Similarly as \cite{Am08}, we will construct a triangle equivalence
between the $(n-1)$-cluster category ${\mathcal {C}}_A^{(n-1)}$ of
$A$ and the generalized $(n-1)$-cluster category $\mathcal {C}$ of
$\Pi_{n+1} (A)$. Then the statement will follow from corollary
\ref{15}.
\end{proof}

Recall that $\langle A \rangle_B$ denotes the thick triangulated
subcategory generated by $A$ in the derived category ${\mathcal
{D}}$$^b(B)$ of the trivial extension $B$. First we will construct a
triangle equivalence from $\langle A \rangle_B$ to ${\rm per}
\Pi_{n+1} (A)$. Consider the functor ${\rm RHom}_B (A_B, -)$. By a
result in \cite{Ke94}, it induces a triangle equivalence between
$\langle A \rangle_B$ and ${\rm per} C$, where $C$ is the dg algebra
${\rm RHom}_B (A_B, A_B)$. The following lemma is an easy extension
of lemma 4.13 of \cite{Am08}.

\begin{lem}
The dg algebras $\Pi_{n+1} (A)$ and ${\rm RHom}_B (A_B, A_B)$ are
isomorphic objects in the homotopy category of dg algebras.
\end{lem}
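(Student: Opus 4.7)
The plan is to establish the isomorphism by computing $\mathrm{RHom}_B(A_B, A_B)$ explicitly and matching the result with $\Pi_{n+1}(A) = T_A(\Theta[n])$. The argument is modeled on Lemma 4.13 of \cite{Am08} (the case $n=2$), and consists of three steps.

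First, I would build a semi-free resolution of $A$ as a right $B$-module. Because $B = A \oplus DA[-n-1]$ is a trivial extension, the augmentation $p : B \twoheadrightarrow A$ has kernel $DA[-n-1]$, which is naturally a right $B$-module via $p$. Since $A$ has global dimension at most $n$, the $A$-module $DA$ admits a bounded projective resolution. Combining this with iterated extensions obtained from the short exact sequence $0 \to DA[-n-1] \to B \to A \to 0$, one can assemble (in bar-resolution style) a cofibrant resolution of $A_B$ whose associated graded pieces are of the form $B \otimes_A (DA[-n-1])^{\otimes^L_A k}$ for $k \geq 0$.

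Next, I would apply $\mathrm{Hom}_B(-, A_B)$ to this resolution. Using the tensor-hom adjunction $\mathrm{Hom}_B(B \otimes_A N, A) \cong \mathrm{Hom}_A(N, A)$ and collecting shifts, the resulting dg algebra reads, as a graded $A$-bimodule,
\[
\bigoplus_{k \geq 0} \mathrm{RHom}_A\!\left((DA[-n-1])^{\otimes^L_A k},\, A\right) \;\simeq\; T_A\!\left(\mathrm{RHom}_A(DA, A)[n]\right),
\]
with product induced by concatenation in the bar resolution. By the definition of the derived $(n+1)$-preprojective algebra and lemma \ref{19}, the right-hand side is quasi-isomorphic to $T_A(\Theta[n]) = \Pi_{n+1}(A)$ as a dg algebra, giving a zig-zag of quasi-isomorphisms between $\mathrm{RHom}_B(A_B, A_B)$ and $\Pi_{n+1}(A)$, hence an isomorphism in the homotopy category of dg algebras.

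The main obstacle is step three: verifying that the identifications above are compatible with the \emph{multiplicative} structures, i.e.\ that the concatenation product in the bar complex transports to the free product in the tensor algebra $T_A(\Theta[n])$ under the chosen quasi-isomorphisms. Additively, this is a routine change-of-rings calculation, but keeping track of signs coming from the Koszul rule and from the shift by $-n-1$ (instead of $-2$ in Amiot's original argument) requires some care; this is precisely the content of Amiot's computation for $n=2$, generalized in the obvious way by replacing the shift $-2$ with $-n-1$ throughout.
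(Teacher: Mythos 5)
The paper gives no proof of this lemma at all --- it is merely asserted to be ``an easy extension of lemma 4.13 of \cite{Am08}'' --- and your sketch (relative bar resolution of $A_B$ over the trivial extension $B$, tensor--hom adjunction, identification of the resulting complex with the tensor algebra, then the multiplicativity check) is exactly the intended generalization of Amiot's argument, so you are on the paper's route. One concrete bookkeeping point: as displayed, your additive identification is off by one suspension per tensor factor, because the $k$-th layer of the bar resolution carries an extra shift $[k]$; the correct term is $\mathrm{Hom}_A\bigl((DA[-n-1])^{\otimes_A k}[k],\,A\bigr)\simeq\bigl(\mathrm{RHom}_A(DA,A)[n]\bigr)^{\otimes_A k}$, and omitting that $[k]$ yields $T_A(\Theta[n+1])$ rather than $T_A(\Theta[n])$. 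The multiplicative compatibility you defer is indeed the substantive content of Amiot's Lemma~4.13 and generalizes as you indicate.
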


As a result, the functor ${\rm RHom}_B (A_B, -)$ induces a triangle
equivalence between $\langle A \rangle_B$ and ${\rm per} \Pi_{n+1}
(A)$, which sends the object $A_B$ to the free module $\Pi_{n+1}
(A)$ and sends the free $B$-module $B$ to the object $A_{\Pi_{n+1}
(A)}$. So the functor also induces an equivalence between the
category ${\rm per}B$ and the thick subcategory $\langle A
\rangle_{\Pi_{n+1} (A)}$ of ${\mathcal {D}} {\Pi_{n+1} (A)}$
generated by $A$. Moreover, as in lemma 4.15 of \cite{Am08}, we
still have that the category $\langle A \rangle_{\Pi_{n+1} (A)}$ is
the bounded derived category ${\mathcal {D}}$$^b {\Pi_{n+1} (A)}$.
Hence, the categories ${\mathcal {C}}_A^{(n-1)}$ and $\mathcal {C}$
are triangle equivalent and theorem \ref{16} holds for arbitrary
positive integers $n$.

\vspace{.3cm}


\bibliographystyle{amsplain}
\bibliography{stanKeller} 

\end{document}